\documentclass[12pt,reqno]{amsart}
\usepackage{graphicx}
\usepackage{amssymb,amsmath}
\usepackage{amsthm}
\usepackage[pdf]{pstricks}
\usepackage{color,graphicx}
\usepackage{hyperref}
\usepackage{color}
\usepackage{epstopdf}
\usepackage{caption}
\usepackage{subcaption}
\usepackage[dvips]{epsfig}

\setlength{\textwidth}{16.5cm}
\setlength{\textheight}{21cm}
\addtolength{\oddsidemargin}{-1cm}
\addtolength{\evensidemargin}{-2cm}

\numberwithin{equation}{section}
\numberwithin{figure}{section}

\newtheorem{theorem}{Theorem}[section]

\newtheorem{remark}[theorem]{Remark}
\newtheorem{lemma}[theorem]{Lemma}

\newenvironment{proof1}
{\begin{trivlist} \item[]{\em Proof }}
	{\hspace*{\fill}$\Box$\end{trivlist}}

\DeclareMathOperator{\R}{\mathbb{R}}

\begin{document}
		
\title[Modular Burgers equation]{\bf Asymptotic stability of viscous shocks \\ in the modular Burgers equation}

\author{Uyen Le}
\address[U. Le]{Department of Mathematics and Statistics, McMaster University,
Hamilton, Ontario, Canada, L8S 4K1}
\email{leu@mcmaster.ca}


\author{Dmitry E. Pelinovsky}
\address[D. Pelinovsky]{Department of Mathematics and Statistics, McMaster University,
	Hamilton, Ontario, Canada, L8S 4K1}
\email{dmpeli@math.mcmaster.ca}

\author{Pascal Poullet}
\address[P. Poullet]{LAMIA, Universite des Antilles, Campus de Fouillole, F-97157 Pointe-a-Pitre, Guadeloupe}
\email{Pascal.Poullet@univ-antilles.fr}

\keywords{modular Burgers equation, traveling fronts, asymptotic stability}

\begin{abstract}
Dynamics of viscous shocks is considered in the modular Burgers equation, where the time evolution becomes complicated due to singularities produced by the modular nonlinearity. We prove that the viscous shocks are asymptotically stable under odd and general perturbations. For the odd perturbations, the proof relies on the reduction of the modular Burgers equation to a linear diffusion equation on a half-line. For the general perturbations, the proof is developed by converting the time-evolution problem to a system of linear equations coupled with a nonlinear equation for the interface position. Exponential weights in space are imposed on the initial data of general perturbations in order to gain the asymptotic decay of perturbations in time. 
We give numerical illustrations of asymptotic stability of the viscous shocks under general perturbations.
\end{abstract}

\date{\today}
\maketitle


\section{Introduction}

Modular nonlinearity is commonly used for approximations of nonlinear interactions between particles 
by piecewise linear functions \cite{Hedberg1,Vainstein1}. Unidirectional propagation of waves in chains of particles
is described by simplified nonlinear evolution equations with modular nonlinearity such as
the modular Burgers \cite{Radostin1,Rudenko1,Rudenko3,Hedberg3}
and modular Korteweg--de Vries \cite{Radostin2,Rudenko2,Hedberg2} equations.

Traveling solutions of modular evolution equations such as 
viscous shocks and solitary waves are found from differential equations by matching solutions of linear equations
with suitable condition at the interface where the modular nonlinearity jumps.
On the other hand, the time evolution of the modular equations is a more complicated problem
because the transport term tends to break the solution along the characteristic lines 
whereas the diffusion or dispersion terms smoothen out the solution and affect propagation of waves near the interface.
It is unclear without detailed analysis if the initial-value problem can be solved in a suitable function space
due to singularities arising from the modular nonlinearity.
Because of these reasons, stability of propagation
of traveling waves remains open.

Similar questions arise in the context of granular chains and involve 
the logarithmic versions of the Burgers and Korteweg--de Vries equations \cite{JP14,J20}. The logarithmic nonlinearity is more singular than 
the modular nonlinearity, hence questions of well-posedness and stability 
of nonlinear waves remain open for some time \cite{CP14,Natali}.

{\em The purpose of this work is to clarify stability of viscous shocks in the modular Burgers equation.} We take the modular Burgers equation in the following normalized form:
\begin{equation}
\label{Burgers}
\frac{\partial w}{\partial t} = \frac{\partial |w|}{\partial x} + \frac{\partial^2 w}{\partial x^2}, 
\end{equation}
where $w(t,x) : \mathbb{R}_+ \times \mathbb{R} \mapsto \mathbb{R}$. 
Traveling wave solutions and preliminary numerical approximations of time-dependent solutions to the modular Burgers equation
(\ref{Burgers}) were constructed with the Fourier sine series in \cite{Radostin1}. Similar results were discussed in \cite{Rudenko1,Rudenko3}.
Collisions of compactly supported pulses were considered in \cite{Hedberg1} by using heuristic
approximation methods. However, no rigorous analysis of well-posedness or numerical approximations with the control of error terms has been developed so far for the modular Burgers equation (\ref{Burgers}).

In a similar context of the diffusion equation with the piecewisely defined nonlinearity, we mention the Kolmogorov--Petrovskii--Piskunov (KPP) model with the cutoff reaction rate proposed in \cite{KPP1}. Asymptotic stability of viscous shocks (stationary fronts) was analyzed in \cite{KPP2} and more recently in \cite{KPP3,KPP4}. 

Viscous shocks and metastable $N$-waves of the classical Burgers equation were studied in \cite{B1} and more recently in \cite{B2,B3}. Stability arguments for viscous shocks and metastable $N$-waves can be developed by using the linearization analysis and dynamical system methods. Viscous shocks are also useful for analysis of the enstrophy growth in the limit of small dissipation, see \cite{D1,D2} and references therein. 

Non-smoothness of the nonlinear term in the modular Burgers equation (\ref{Burgers}) restricts us from using the dynamical system methods in the analysis of asymptotic stability of viscous shocks. Nevertheless, we are able to use the linearized estimates due to the piecewise definition of the nonlinear term in this model. 

{\em The main novelty of this paper is the rigorous analysis of the modular nonlinearity.} We keep the functional-analytic framework as simple as possible. If the perturbation has the odd spatial symmetry, the asymptotic stability result follows from analysis of the linear diffusion equation. 
For general perturbations, we impose the spatial exponential decay on the initial data in order to gain the asymptotic decay of perturbations in time. This technique is definitely not novel, see 
\cite{Sch,Hilder,Sat} for earlier studies in a similar context. Further improvements of the asymptotic stability results in less restrictive function spaces are left for future work.

The paper is organized as follows. Main results are described in Section \ref{sec-main}. Properties of solutions of the linear diffusion and Abel integral equations are reviewed in Section \ref{sec-preliminary}. 
Asymptotic stability of viscous shocks in the space of odd and general functions is proven in Sections \ref{section-odd} and \ref{section-general} respectively. Numerical illustrations are given in Section \ref{section-numerics}. The summary and open directions are described 
in Section \ref{section-conclusion}.

\section{Main results}
\label{sec-main}

In what follows, we use the classical notations $H^k(\mathbb{R})$ for the Sobolev space of squared integrable distributions on $\mathbb{R}$ with squared 
integrable derivatives up to the integer order $k \in \mathbb{N}$. 
In particular, the norms in $H^1$ and $H^2$ are defined by 
\begin{eqnarray*}
\| f \|_{H^1} & := & \left( \| f \|^2_{L^2} + \| f' \|_{L^2}^2 \right)^{1/2}, \\
\| f \|_{H^2} & := & \left( \| f \|^2_{L^2} + \| f' \|_{L^2}^2 + \| f'' \|_{L^2}^2 \right)^{1/2},
\end{eqnarray*}
Similarly, we consider $W^{1,\infty}$ and $W^{2,\infty}$ for bounded functions with bounded derivatives up the first and second order respectively. To simplify the notations, we use 
$$
\| f \|_{H^k \cap W^{k,\infty}} := \max\{ \|f \|_{H^k}, \| f \|_{W^{k,\infty}}\}.
$$
By Sobolev's embedding, if $f \in H^2(\mathbb{R})$, then $f \in C^1(\mathbb{R}) \cap W^{1,\infty}(\mathbb{R})$ and $f$ and $f'$ decay to zero at infinity. In many cases throughout our work, if $f \in W^{2,\infty}(\mathbb{R})$, then $f$ will be considered in the class of functions with piecewise continuous $f''$.

Basic properties of the heat kernel, convolution estimates, solutions to the linear diffusion equations, and solutions to the Abel integral equations are reviewed in Section \ref{sec-preliminary}.

{\em The traveling viscous shock} of the modular Burgers equation (\ref{Burgers}) can be found in the closed analytical form. Substituting $w(t,x) = W_c(x-ct)$ in (\ref{Burgers}) yields the differential equation
\begin{equation}
\label{ode}
W_c''(x) + {\rm sign}(W_c) W_c'(x) + c W_c'(x) = 0.
\end{equation}
Solutions of (\ref{ode}) are piecewise $C^2$ functions satisfying the interface condition
\begin{equation}
\label{interface}
[W_c'']^+_-(x_0) = -2 W_c'(x_0)
\end{equation}
at each interface located at $x_0$, where $[f]^+_-(x_0) = f(x_0^+) - f(x_0^-)$ is the jump of a piecewise
continuous function $f$ across $x_0$. Assuming a single interface at $x_0 = 0$ and the boundary conditions
$W_c(x) \to W_{\pm}$ as $x \to \pm \infty$ with $W_- < 0 < W_+$, we obtain the exact solution
to the differential equation (\ref{ode}) satisfying the jump condition (\ref{interface}) in the form
\begin{equation}
\label{shock}
W_c(x) = \left \{ \begin{array}{ll} W_+ (1 - e^{-(1+c)x}), \quad & x > 0, \\
W_- (1 - e^{(1-c)x}), \quad & x < 0, \end{array} \right.
\end{equation}
with the uniquely defined speed
\begin{equation}
\label{speed}
c = \frac{W_+ + W_-}{W_- - W_+}.
\end{equation}

If $W_+ = -W_-$, then $c = 0$ and the viscous shock $W_0$ is time-independent.
Moreover, the modular Burgers equation (\ref{Burgers}) on the line $\mathbb{R}$
is closed on the half-line in the space of odd functions. In this case, the evolution equation with the normalized boundary condition
$W_+ \equiv 1$ takes the form:
\begin{equation}
\label{Burgers-half}
\left\{ \begin{array}{l}
w_t = w_x + w_{xx}, \quad
x > 0, \\
w(t,0) = 0, \\
w(t,x) \to 1 \quad \mbox{\rm as} \;\; x \to +\infty, \end{array} \right.
\end{equation}
subject to the positivity condition
\begin{equation}
\label{w-positivity}
w(t,x) > 0, \quad x > 0.
\end{equation}
The classical solution of the boundary-value problem
(\ref{Burgers-half}) satisfies the constraint
\begin{equation}
\label{interface-half}
w_x(t,0^+) + w_{xx}(t,0^+) = 0.
\end{equation}
If a classical solution $w(t,x) : \mathbb{R}_+ \times \mathbb{R}_+ \mapsto \mathbb{R}$ to the boundary-value problem (\ref{Burgers-half}) is extended to the odd function 
$w_{\rm ext}(t,x) : \mathbb{R}_+ \times \mathbb{R}\mapsto \mathbb{R}$, then $w_{\rm ext}(t,\cdot)$ is a piecewise $C^2$ function satisfying the interface condition
\begin{equation}
\label{interface-half-extended}
[w_{xx}]^+_-(t,0) = -2 w_x(t,0),
\end{equation}
where $w \equiv w_{\rm ext}$ for simplicity of notations.

The following theorem states the asymptotic stability of the  viscous shock (\ref{shock})
with $c = 0$ under the odd perturbations from the analysis of the
boundary-value problem (\ref{Burgers-half}) subject to the positivity condition
(\ref{w-positivity}) and the boundary constraint (\ref{interface-half}).
The proof of this theorem is presented in Section \ref{section-odd}.

\begin{theorem}
\label{theorem-1}
For every $\epsilon > 0$ there is $\delta > 0$ such that every odd $w_0$ satisfying
\begin{equation}
\label{initial-1}
\| w_0 - W_0 \|_{H^2} < \delta
\end{equation}
generates the unique odd solution $w(t,x)$ to the modular Burgers equation (\ref{Burgers}) with $w(0,x) = w_0(x)$ satisfying
\begin{equation}
\label{final-1}
\| w(t,\cdot) - W_0 \|_{H^2} < \epsilon, \quad t > 0
\end{equation}
and
\begin{equation}
\label{scattering-1}
\| w(t,\cdot) - W_0 \|_{W^{2,\infty}} \to 0 \quad \mbox{\rm as} \quad t \to + \infty.
\end{equation}
The solution belongs to the class of functions such that $w - W_0 \in C(\mathbb{R}_+,H^2(\mathbb{R}))$.
\end{theorem}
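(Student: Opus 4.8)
The plan is to exploit the fact that, under the positivity condition \eqref{w-positivity}, the modular nonlinearity degenerates: on the half-line $x > 0$ one has $|w| = w$, so the boundary-value problem \eqref{Burgers-half} becomes the \emph{linear} convection--diffusion equation $w_t = w_x + w_{xx}$ with $w(t,0) = 0$. Since $W_0(x) = 1 - e^{-x}$ is a stationary solution of this linear equation ($W_0'' + W_0' = 0$) and $W_0(0) = 0$, the perturbation $v := w - W_0$ solves the homogeneous problem
\begin{equation*}
v_t = v_x + v_{xx}, \quad x > 0, \qquad v(t,0) = 0, \qquad v(t,x) \to 0 \ \text{as} \ x \to +\infty,
\end{equation*}
with $v(0,\cdot) = w_0 - W_0$ and $v(t,0) = 0$ for all $t$. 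Differentiating $v(t,0)=0$ in $t$ and using the equation recovers the constraint \eqref{interface-half} automatically, so no extra condition must be imposed.

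First I would remove the drift by the substitution $v(t,x) = e^{-x/2 - t/4} u(t,x)$, which turns the problem into the heat equation $u_t = u_{xx}$ on $(0,\infty)$ with the Dirichlet condition $u(t,0) = 0$ and datum $u_0 = e^{x/2} v_0$. Solving by the method of images and undoing the substitution yields, after completing the squares, the closed representation
\begin{equation*}
v(t,x) = \frac{1}{\sqrt{4\pi t}} \int_0^\infty \left[ e^{-(x - y + t)^2/(4t)} - e^{-x}\, e^{-(x + y - t)^2/(4t)} \right] v_0(y)\, dy,
\end{equation*}
in which the growing weight has cancelled and both kernels are genuine Gaussians. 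One checks directly that this formula returns $v_0$ as $t \to 0^+$ and vanishes at $x = 0$.

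From this representation both assertions follow from the heat-kernel estimates reviewed in Section \ref{sec-preliminary}. For \eqref{scattering-1}, the Gaussians are centred at $y = x + t$ and $y = t - x$, which drift to $+\infty$ at unit speed, so the convolution only samples the decaying tail of $v_0$; a Cauchy--Schwarz bound in $y$ gives $\|v(t,\cdot)\|_{L^\infty} \le C t^{-1/4} \|v_0\|_{L^2} \to 0$. Because $v_0(0) = 0$, one may integrate by parts in $y$ (the identity $\partial_x = -\partial_y$ for the first kernel moves an $x$-derivative onto $v_0$ with no boundary contribution), so that $\partial_x v$ and $\partial_x^2 v$ satisfy analogous $t^{-1/4}$ decay estimates with $\|v_0\|_{L^2}$ replaced by $\|v_0'\|_{L^2}$ and $\|v_0''\|_{L^2}$ (up to a boundary term carrying the decaying factor $e^{-(x+t)^2/(4t)}$); this yields \eqref{scattering-1} in $W^{2,\infty}$. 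For the uniform bound \eqref{final-1}, Young's inequality applied to the same convolutions gives $\|v(t,\cdot)\|_{H^2} \le C \|v_0\|_{H^2}$ for all $t > 0$ (the energy method is less convenient, since the Robin-type boundary term produced by \eqref{interface-half} has an unfavourable sign); choosing $\delta = \epsilon / C$ establishes stability.

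The step I expect to be the main obstacle is justifying that the linear reduction remains valid for all $t>0$, i.e. that the positivity condition \eqref{w-positivity} is never violated. This is where the two norms interact: writing $w = W_0 + v$ and using $w(t,0)=0$ with $W_0'(\xi) = e^{-\xi}$, one has $w(t,x) = \int_0^x \left( e^{-\xi} + v_x(t,\xi) \right) d\xi$, which is strictly positive on a fixed interval $(0, x_*]$ as soon as $\|v_x(t,\cdot)\|_{L^\infty} < e^{-x_*}$, while for $x \ge x_*$ positivity holds once $\|v(t,\cdot)\|_{L^\infty} < 1 - e^{-x_*}$. By Sobolev embedding these smallness requirements are guaranteed by the $H^2$ bound of the previous paragraph. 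The argument is then closed by a continuation scheme: on the maximal interval on which \eqref{w-positivity} holds the solution is given by the explicit formula and obeys \eqref{final-1}, which in turn keeps $\|v(t,\cdot)\|_{W^{1,\infty}}$ strictly below the positivity threshold, so the interval cannot be finite. Existence and uniqueness in the class $w - W_0 \in C(\mathbb{R}_+, H^2(\mathbb{R}))$ follow from the linearity of the reduced problem and the continuity of the representation in $t$.
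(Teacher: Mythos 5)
Your proposal is correct and follows essentially the same route as the paper: the same decomposition $w = W_0 + u$ reducing to the linear convection--diffusion problem on the half-line, the same substitution $e^{-x/2-t/4}$ to the Dirichlet heat equation solved by images, the same Young/Cauchy--Schwarz estimates giving the uniform $H^2$ bound and the $t^{-1/4}$ decay in $W^{2,\infty}$, and the same two-region argument (near and away from the origin) for the positivity condition. The only cosmetic differences are that the paper additionally records an a priori energy estimate (its Lemma \ref{lemma-1}) to phrase uniqueness, and checks positivity a posteriori rather than via your continuation scheme; neither changes the substance.
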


\begin{remark}
Since $H^2(\mathbb{R})$ is continuously embedded into $C^1(\mathbb{R}) \cap W^{1,\infty}(\mathbb{R})$ with functions and their first derivatives decaying to zero at infinity, whereas $W_0(0) = 0$, $W_0'(0) = 1$, and $W_0(x) \to 1$ as $x \to \infty$,
the only interface of the solution $w(t,\cdot)$ 
in Theorem \ref{theorem-1} with small $\epsilon > 0$ 
is located at the origin. The positivity condition (\ref{w-positivity}) is satisfied for all $t \in \mathbb{R}_+$.
\end{remark}

\begin{remark}
The following transformation
\begin{equation}
\label{transformation}
w(t,x) = \left\{ \begin{array}{ll} W_+ v((1+c)^2 t, (1+c) (x-ct)), \quad & x - ct > 0, \\
W_- v((1-c)^2 t, (1-c)(x-ct), \quad & x - ct < 0, \end{array} \right.
\end{equation}
where $c$ is given by (\ref{speed}), relates solutions
$w(t,x)$ with $W_+ \neq -W_-$ to solutions $v(t,x)$ with normalized boundary conditions $v(t,x) \to \pm 1$ as $x \to \pm \infty$.
If $v(t,x)$ is odd in $x$, then it satisfies the same boundary-value problem (\ref{Burgers-half})
subject to the same constraints (\ref{w-positivity}) and (\ref{interface-half}).
Hence Theorem \ref{theorem-1} can be extended trivially to the traveling viscous shock $W_c$ with $c \neq 0$
under the odd perturbation of $v(t,x)$ in (\ref{transformation}).
\end{remark}

For the general perturbations, we consider the solution $w(t,x)$ to the modular
Burgers equation (\ref{Burgers}) with exactly one interface located dynamically at $x = \xi(t)$. Without loss of generality, we assume $\xi(0) = 0$. The evolution equation with the normalized boundary conditions $W_+ = -W_- \equiv 1$
takes the form:
\begin{equation}
\label{Burgers-interface}
\left\{ \begin{array}{l}
w_t = \pm w_x + w_{xx}, \quad
\pm (x -\xi(t)) > 0, \\
w(t,\xi(t)) = 0, \\
w(t,x) \to \pm 1 \quad \mbox{\rm as} \;\; x \to \pm \infty, \end{array} \right.
\end{equation}
subject to the positivity conditions
\begin{equation}
\label{w-positivity-general}
\pm w(t,x) > 0, \quad \pm (x - \xi(t)) > 0.
\end{equation}
Piecewise $C^2$ solutions of the boundary-value problem
(\ref{Burgers-interface}) satisfy the interface condition
\begin{equation}
\label{interface-interface}
[w_{xx}]^+_-(t,\xi(t)) = - 2 w_x(t,\xi(t)),
\end{equation}
whereas the boundary condition $w(t,\xi(t)) = 0$ implies
\begin{equation}
\label{interface-continuity}
w_t(t,\xi(t)) + \xi'(t) w_x(t,\xi(t)) = 0,
\end{equation}
for continuous $w_t$ and $w_x$ across the interface at $x = \xi(t)$.

The following theorem states the asymptotic stability of the  viscous shock (\ref{shock}) with $c = 0$ under general perturbations from the analysis of the
boundary-value problem (\ref{Burgers-interface}) subject to the positivity conditions
(\ref{w-positivity-general}) and the interface conditions (\ref{interface-interface}) and (\ref{interface-continuity}).
The proof of this theorem is presented in Section \ref{section-general}.

\begin{theorem}
\label{theorem-2}
Fix $\alpha \in \left(0,\frac{1}{2}\right)$. 
For every $\epsilon > 0$ there is $\delta > 0$ such that every $w_0$ satisfying
\begin{equation}
\label{initial-2}
\| w_0 - W_0 \|_{H^2 \cap W^{2,\infty}} + \| e^{\alpha |\cdot|} (w_0 - W_0) \|_{W^{2,\infty}} < \delta
\end{equation}
generates the unique solution $w(t,x)$ to the modular Burgers equation (\ref{Burgers}) with $w(0,x) = w_0(x)$ satisfying 
\begin{equation}
\label{final-2}
\| w(t,\cdot + \xi(t)) - W_0 \|_{H^2 \cap W^{2,\infty}} < \epsilon, \quad t > 0
\end{equation}
and 
\begin{equation}
\label{scattering-2}
\| w(t,\cdot + \xi(t)) - W_0 \|_{W^{2,\infty}} \to 0 \quad \mbox{\rm as} \quad t \to + \infty,
\end{equation}
where $\xi \in C^1(\mathbb{R}_+)$ is the uniquely determined interface position satisfying $\xi(0) = 0$ and $\xi' \in L^1(\mathbb{R}_+) \cap L^{\infty}(\mathbb{R}_+)$.
The solution belongs to the class of functions such that
\begin{equation}
\label{space-2}
w(t,\cdot + \xi(t)) - W_0 \in C(\mathbb{R}_+,H^2(\mathbb{R}) \cap W^{2,\infty}(\mathbb{R}))
\end{equation}
and
\begin{equation}
\label{space-2-exp}
e^{\alpha |\cdot + \xi(t)|} [w(t,\cdot + \xi(t)) - W_0] \in C(\mathbb{R}_+, W^{2,\infty}(\mathbb{R})).
\end{equation}
\end{theorem}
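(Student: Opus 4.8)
The plan is to freeze the interface at the origin, split off the shock profile, and recast the free-boundary problem (\ref{Burgers-interface})--(\ref{interface-continuity}) as a pair of linear parabolic equations on the two half-lines coupled to a single nonlinear equation for the interface velocity. First I would pass to the moving frame $y = x - \xi(t)$ by setting $u(t,y) := w(t, y+\xi(t))$ and $v := u - W_0$. Since $W_0(0)=0$ and $W_0$ is stationary, the interface stays at $y=0$ with $v(t,0)=0$, and, using $W_0'' \pm W_0' = 0$, a direct computation turns (\ref{Burgers-interface}) into
\begin{equation*}
v_t = v_{yy} \pm v_y + \xi'(t)\, u_y, \qquad \pm y > 0,
\end{equation*}
with $v$ and $v_y$ continuous across $y=0$ and the jump $[v_{yy}]^+_-(t,0) = -2v_y(t,0)$ inherited from (\ref{interface-interface}). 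Differentiating the constraint $v(t,0)=0$ in $t$ and using the one-sided limits of the equation at $y=0$ yields the nonlinear equation for the interface velocity,
\begin{equation*}
\xi'(t) = -\frac{v_{yy}(t,0^+) + v_{yy}(t,0^-)}{2\bigl(1 + v_y(t,0)\bigr)},
\end{equation*}
which is well defined for small perturbations since the denominator stays close to $1$. The forcing $\xi' u_y = \xi'(W_0' + v_y)$ is the only coupling between the linear evolution of $v$ and the interface motion.

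Next I would build in the exponential weight. Writing $V := e^{\alpha|y|}v$, the drift $\pm v_y$ combines with the weight so that $V$ satisfies, on each half-line,
\begin{equation*}
V_t = V_{yy} + (1-2\alpha)(\operatorname{sign} y)\, V_y - \alpha(1-\alpha) V + e^{\alpha|y|}\xi'(t)\, u_y,
\end{equation*}
and the decisive feature is the damping coefficient $-\alpha(1-\alpha)$, strictly negative for $\alpha \in \bigl(0,\tfrac12\bigr)$: the exponential weight pushes the essential spectrum of the half-line generator into the open left half-plane and opens a spectral gap of width $\alpha(1-\alpha)$. Equivalently, the homogeneous equations $v_t = v_{yy}\pm v_y$ with $v(t,0)=0$ reduce under $v = e^{-|y|/2 - t/4}p$ to the Dirichlet heat equation $p_t = p_{yy}$, and the interplay of the factor $e^{-t/4}$ with the weight $e^{\alpha|y|}$ produces exponential-in-time decay of $V$ at rate $\alpha(1-\alpha)$. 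The weighted forcing splits as $e^{\alpha|y|}\xi' u_y = \xi'\bigl(e^{-(1-\alpha)|y|} + V_y - \alpha(\operatorname{sign} y)V\bigr)$, which is controlled by $|\xi'|$ times weighted $W^{2,\infty}$-norms of $V$ because $W_0'$ decays like $e^{-|y|}$, faster than the weight grows.

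I would then close the argument by a Duhamel/fixed-point scheme on top of the linear and Abel-integral estimates of Section \ref{sec-preliminary}. Representing $v$ on each half-line through the Dirichlet heat semigroup and Duhamel's formula, I would compute the one-sided Neumann traces $v_y(t,0^\pm)$; because the boundary behaviour of the half-line heat kernel carries the singular factor $(t-s)^{-1/2}$, substituting these traces into the relation for $\xi'$ produces an Abel integral equation for $\xi'$. Inverting it by the results of Section \ref{sec-preliminary} determines $\xi'$ uniquely, shows $\|\xi'\|_{L^1 \cap L^\infty} = O(\delta)$, and hence that $\xi(t)$ converges to a finite limit with $\xi \in C^1(\mathbb{R}_+)$. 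Feeding this bound back into the Duhamel formula for $V$ in the weighted $W^{2,\infty}$ space, and running a contraction in a norm encoding the decay $e^{-\alpha(1-\alpha)t}$ of $V$ together with the $L^1 \cap L^\infty$-smallness of $\xi'$, yields global existence and uniqueness, the uniform bound (\ref{final-2}), the decay (\ref{scattering-2}), and the regularity (\ref{space-2})--(\ref{space-2-exp}); smallness in $W^{2,\infty}$ together with $W_0(0)=0$ and $W_0'(0)=1$ keeps the profile monotone near the origin, so the positivity conditions (\ref{w-positivity-general}) persist and the interface remains unique.

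The main obstacle will be closing the self-consistent loop between the interface velocity and the boundary traces: the map $\xi' \mapsto v_y(\cdot,0^\pm) \mapsto \xi'$ runs through the Abel kernel $(t-s)^{-1/2}$, which is not integrable against merely bounded data, so one cannot estimate $\xi'$ by smallness alone. The resolution is to play the exponential-in-time decay coming from the spectral gap against the $L^1$-in-time control of $\xi'$, so that the singular convolution is tamed and the fixed-point map contracts; carrying this out while simultaneously propagating the weighted second-derivative bounds, preserving the jump condition $[v_{yy}]^+_- = -2v_y(0)$ and the $C^1$-matching across the moving interface, and maintaining positivity, is the step that substitutes for the smooth linearization that the modular nonlinearity denies us.
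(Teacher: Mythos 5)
Your proposal follows essentially the same route as the paper's proof: the same moving-frame decomposition and interface-velocity formula (Lemma \ref{lem-interface}), the same reduction to a pair of half-line heat problems and an Abel-type integral equation for $\xi'$ closed by fixed-point arguments (Lemmas \ref{lemma-gen-2}, \ref{lemma-gen-4}, \ref{lemma-gen-3}, and \ref{lemma-gen-5}), and the same use of the weight $e^{\alpha|y|}$ to produce the factor $e^{-\alpha(1-\alpha)t}$ that renders $\xi'$ and the weighted norms integrable in time (Lemma \ref{lemma-gen-3-prime}). The only substantive difference is at the very end: the paper closes both fixed points in $L^1(\mathbb{R}_+)\cap L^{\infty}(\mathbb{R}_+)$ in time and deduces the decay (\ref{scattering-2}) qualitatively from $L^1$-in-time integrability plus continuity, whereas your plan to contract in a norm encoding decay at the full rate $e^{-\alpha(1-\alpha)t}$ would need the additional check that the feedback convolutions against the Abel kernel $(t-\tau)^{-1/2}$ preserve exponential decay, which they do only at a rate strictly below $\alpha(1-\alpha)$.
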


\begin{remark}
The additional requirement $w_0 - W_0 \in H^2(\mathbb{R}) \cap W^{2,\infty}(\mathbb{R})$ for the initial data $w_0$ in Theorem \ref{theorem-2} compared
to $w_0 - W_0 \in H^2(\mathbb{R})$ in Theorem \ref{theorem-1} is due to the necessity to control $\xi'(t)$ from the interface conditions (\ref{interface-interface}) and (\ref{interface-continuity}).
As we will show in Lemma \ref{lem-interface}, this is possible if the solution stays in the class of functions satisfying (\ref{space-2}).
\end{remark}

\begin{remark}
We assume in (\ref{initial-2}) that $|w_0(x) - W_0(x)| \to 0$ as $|x| \to \infty$ 
at least exponentially with the decay rate $\alpha \in (0,\frac{1}{2})$. This gives the asymptotic stability resulting in 
$$
\xi'(t) \to 0 \quad \mbox{\rm and} \quad \| w(t,\cdot + \xi(t)) - W_0 \|_{W^{2,\infty}} \to 0 \quad \mbox{\rm as} \quad t \to +\infty.
$$ 
The exponential decay in space is preserved in time as is shown in (\ref{space-2-exp}). 
It is opened for further studies to relax the exponential decay 
requirement on the general initial data $w_0$.
\end{remark}

\begin{remark}
Thanks to the transformation (\ref{transformation}), Theorem \ref{theorem-2} can be extended trivially 
to the traveling viscous shock $W_c$ with $c \neq 0$ under a general perturbation of $v(t,x)$.
\end{remark}

Numerical illustrations of the asymptotic stability of the  viscous shock (\ref{shock}) with $c = 0$ for two examples of general perturbations are given in Section \ref{section-numerics},
where the boundary-value problem (\ref{Burgers-interface}) with (\ref{w-positivity-general}), (\ref{interface-interface}),
and (\ref{interface-continuity}) is approximated by using the finite-difference method. Error of the finite-difference numerical approximation is controlled by the standard analysis. The two examples are constructed for perturbations with the Gaussian and exponential decay at infinity. Numerical simulations illustrate the asymptotic stability result of Theorem \ref{theorem-2}.


\section{Preliminary results}
\label{sec-preliminary}

The heat kernel is defined by $G(t,x) := \frac{1}{\sqrt{4\pi t}} e^{-\frac{x^2}{4t}}$. It follows from explicit computations of integrals that the heat kernel satisfies the properties:
\begin{equation}
\label{heat-kernel}
\| G(t,\cdot) \|_{L^1(\mathbb{R})} = 1, \quad
\| G(t,\cdot) \|_{L^2(\mathbb{R})} = \frac{1}{(8 \pi t)^{1/4}}, \quad 
\| G(t,\cdot) \|_{L^{\infty}(\mathbb{R})} = \frac{1}{(4 \pi t)^{1/2}},
\end{equation}
\begin{equation}
\label{heat-kernel-derivative}
\| \partial_x G(t,\cdot) \|_{L^1(\mathbb{R})} 
= \frac{1}{(\pi t)^{1/2}}, \quad 
\| \partial_x G(t,\cdot) \|_{L^2(\mathbb{R})} 
	= \frac{1}{2 (8 \pi)^{1/4} t^{3/4}},  
\end{equation}
and
\begin{equation} 
\label{heat-kernel-derivative-last}
\| \partial_x G(t,\cdot) \|_{L^{\infty}(\mathbb{R})} 
= \frac{1}{2 (2 \pi e)^{1/2} t},
\end{equation}
The heat kernel is used to solve the following Dirichlet problem for the linear diffusion equation on the half-line: 
\begin{equation}
\label{Burgers-odd-tilde}
\left\{ \begin{array}{ll}
v_t = v_{xx}, \quad & x > 0, \quad t > 0, \\
v(t,0) = 0, \quad & t > 0, \\
v(0,x) = v_0(x), \quad & x > 0. \end{array} \right.
\end{equation}
For a rather general class of functions $v_0(x) : \mathbb{R}_+ \mapsto \mathbb{R}$ (not necessarily decaying to zero at
infinity), the Dirichlet problem (\ref{Burgers-odd-tilde}) can be solved
by the method of images:
\begin{eqnarray}
v(t,x) = \int_0^{\infty} v_0(y) \left[ G(t,x-y) - G(t,x+y) \right] dy.
\label{u-tilde-sol}
\end{eqnarray}
The convolution integrals in (\ref{u-tilde-sol}) are analyzed with the generalized Young's inequality:
\begin{equation}
\label{Young}
\| f \ast g \|_{L^r(\mathbb{R})} \leq \| f \|_{L^p(\mathbb{R})} \| g \|_{L^q(\mathbb{R})}, \quad p,q,r \geq 1, \quad 1 + \frac{1}{r} = \frac{1}{p} + \frac{1}{q},
\end{equation}
for every $f \in L^p(\mathbb{R})$ and $g \in L^q(\mathbb{R})$, 
where $(f \ast g)(x) := \int_{\mathbb{R}} f(y) g(x-y) dy$ is the convolution integral. When integration is needed to be restricted on $\mathbb{R}_+$ as in (\ref{u-tilde-sol}), we can use the characteristic function $\chi_{\mathbb{R}_+}$ defined by 
$\chi_{\mathbb{R}_+}(x) = 1$ for $x > 0$ and $\chi_{\mathbb{R}_+}(x) = 0$ for $x < 0$.

For the inhomogeneous linear diffusion equation on the half-line: 
\begin{equation}
\label{Burgers-odd-tilde-inhomog}
\left\{ \begin{array}{ll}
v_t = v_{xx} + f(t,x), \quad & x > 0, \quad t > 0, \\
v(t,0) = 0, \quad & t > 0, \\
v(0,x) = v_0(x), \quad & x > 0. \end{array} \right.
\end{equation}
with given $v_0(x) : \mathbb{R}_+ \mapsto \mathbb{R}$ 
and $f(t,x) : \mathbb{R}_+ \times \mathbb{R}_+ \mapsto \mathbb{R}$, 
the exact solution is written in the form
\begin{eqnarray}
\nonumber
v(t,x) & = & \int_0^{\infty} v_0(y) \left[ G(t,x-y) - G(t,x+y) \right] dy \\
&& 
+ \int_0^t \int_0^{\infty} f(\tau,y) \left[ G(t-\tau,x-y) - G(t-\tau,x+y) \right] dy d \tau.
\label{u-tilde-sol-inhomog}
\end{eqnarray}

Next, we analyze the following initial-value problem:
\begin{equation}
\label{eta-problem}
\left\{ \begin{array}{ll}
\nu_t = \nu_y + \nu_{yy} + 2 \gamma(t) \delta(y), \quad & y \in \mathbb{R}, \;\; t > 0, \\
\nu(0,y) = 0, \quad & y \in \mathbb{R}, \end{array} \right.
\end{equation}
where $\delta$ is the Dirac distribution centered at zero and $\gamma \in C(\mathbb{R}_+)$ is a given function. In order to construct the exact solution to this problem, 
we use the Laplace transform in time $t$ defined by
\begin{equation}
\label{laplace-transform}
\hat{\gamma}(p) := \mathcal{L}(\gamma)(p) = \int_0^{\infty} \gamma(t) e^{-p t} dt, \quad p \geq 0.
\end{equation}
We also use the following relations from the table of Laplace transforms for every $y \in \mathbb{R}$:
\begin{equation}
\label{table-integral}
\mathcal{L} \left( \frac{1}{\sqrt{\pi t}} e^{-\frac{y^2}{4t}} \right) = 
\frac{1}{\sqrt{p}} e^{-\sqrt{p} |y|}, \quad p > 0
\end{equation}
and
\begin{equation}
\label{table-integral-derivative}
\mathcal{L} \left( \frac{1}{\sqrt{\pi t}} \frac{y}{2t} e^{-\frac{y^2}{4t}} \right) = {\rm sign}(y) \; e^{-\sqrt{p} |y|}, \quad p > 0.
\end{equation}
The following lemma gives the exact solution to the initial-value problem 
(\ref{eta-problem}).

\begin{lemma}
	\label{prop-tech}
For every $\gamma \in C(\mathbb{R}_+)$, there exists the unique 
solution to the initial-value problem (\ref{eta-problem}) in the exact form:
	\begin{equation}
	\label{def-nu}
	\nu(t,y) := 2 \int_0^t \frac{\gamma(\tau)}{\sqrt{4 \pi (t-\tau)}}  
	e^{-\frac{(y + t - \tau)^2}{4 (t-\tau)}} d \tau, \quad y \in \mathbb{R}, 
	\quad t > 0.
	\end{equation}
Moreover, $\nu$ belongs to the class of functions in $C(\mathbb{R}_+,H^1(\mathbb{R}) \cap W^{1,\infty}(\mathbb{R}_+))$ satisfying
\begin{equation}
\label{bc-nu}
\nu_y(t,0^{\pm}) + \frac{1}{2} \nu(t,0) = \mp \gamma(t), \quad t > 0.
\end{equation}
\end{lemma}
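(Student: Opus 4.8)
The plan is to solve the linear problem (\ref{eta-problem}) by the Laplace transform in $t$, read off the closed form, and then verify the regularity and the boundary relation (\ref{bc-nu}) directly. Applying the transform (\ref{laplace-transform}) to the equation and using $\nu(0,y) = 0$ reduces (\ref{eta-problem}) to the ordinary differential equation
\begin{equation*}
\hat{\nu}_{yy}(p,y) + \hat{\nu}_y(p,y) - p \, \hat{\nu}(p,y) = -2 \hat{\gamma}(p) \delta(y), \quad y \in \mathbb{R},
\end{equation*}
for each fixed $p > 0$. The characteristic roots of the homogeneous equation are $r_{\pm}(p) = \frac{1}{2}(-1 \pm \sqrt{1+4p})$, so the decaying solutions are $e^{r_-(p) y}$ for $y > 0$ and $e^{r_+(p) y}$ for $y < 0$. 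First I would impose continuity of $\hat{\nu}$ at $y = 0$ together with the jump condition $\hat{\nu}_y(p,0^+) - \hat{\nu}_y(p,0^-) = -2 \hat{\gamma}(p)$ coming from the Dirac source; solving the resulting two-by-two system gives
\begin{equation*}
\hat{\nu}(p,y) = \frac{2 \hat{\gamma}(p)}{\sqrt{1+4p}} \, e^{-y/2} \, e^{-\frac{1}{2}\sqrt{1+4p}\,|y|}, \quad y \in \mathbb{R}.
\end{equation*}

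Next I would invert this expression. Writing $q := p + \frac{1}{4}$, the $y$-dependent factor becomes $\frac{1}{\sqrt{q}} e^{-\sqrt{q}\,|y|}$, which by the table relation (\ref{table-integral}) and the shift rule $\mathcal{L}(e^{-t/4} g)(p) = \mathcal{L}(g)(p + \frac{1}{4})$ is the Laplace transform of $\frac{1}{\sqrt{\pi t}} e^{-t/4 - y^2/(4t)}$. Multiplying by $\hat{\gamma}(p)$ and $e^{-y/2}$ and using the convolution theorem in $t$, I obtain $\nu$ as a time convolution of $\gamma$ against this kernel; completing the square through the identity $-\frac{y}{2} - \frac{s}{4} - \frac{y^2}{4s} = -\frac{(y+s)^2}{4s}$ with $s = t - \tau$ yields exactly the stated formula (\ref{def-nu}). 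Uniqueness is then immediate, since the difference of two solutions has identically vanishing Laplace transform.

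For the regularity claim $\nu \in C(\mathbb{R}_+, H^1(\mathbb{R}) \cap W^{1,\infty}(\mathbb{R}_+))$, I would view (\ref{def-nu}) as a convolution in $t$ of the bounded continuous function $\gamma$ against a shifted heat kernel and, after differentiation in $y$, against its derivative. Estimating the $L^2$ and $L^\infty$ norms in $y$ then follows from the explicit bounds (\ref{heat-kernel}), (\ref{heat-kernel-derivative}) combined with Young's inequality (\ref{Young}); the Gaussian factor $e^{-(y+s)^2/(4s)}$ keeps every $y$-integral finite and furnishes continuity in $t$.

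Finally, to establish (\ref{bc-nu}) I would compute the one-sided boundary quantities. The cleanest route is to stay in the Laplace domain: from the closed form above one reads off $\hat{\nu}(p,0) = 2 \hat{\gamma}(p)/\sqrt{1+4p}$ and $\hat{\nu}_y(p,0^\pm) = -\hat{\gamma}(p)/\sqrt{1+4p} \mp \hat{\gamma}(p)$, so that $\hat{\nu}_y(p,0^\pm) + \frac{1}{2} \hat{\nu}(p,0) = \mp \hat{\gamma}(p)$, and inverting the transform gives (\ref{bc-nu}). I expect the main obstacle to be the rigorous justification of these one-sided limits, because $\nu_y$ is genuinely discontinuous at $y = 0$ whenever $\gamma \not\equiv 0$. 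To handle this carefully I would differentiate (\ref{def-nu}) directly and observe that the kernel $\frac{1}{\sqrt{\pi s}} \frac{y}{2s} e^{-y^2/(4s)}$ — precisely the object appearing in (\ref{table-integral-derivative}) — has unit mass over $s \in (0,\infty)$ and concentrates at $s = 0$ as $y \to 0^+$, hence acts as an approximate identity. The resulting limit is $\mp \gamma(t)$, with the sign dictated by ${\rm sign}(y)$, which is the delicate point where the Dirac source and the continuity of $\gamma$ enter.
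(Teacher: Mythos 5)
Your route to the closed form, the uniqueness claim, and the boundary relation (\ref{bc-nu}) is essentially the paper's: both arguments live in the Laplace domain, produce $\hat{\nu}(p,y) = e^{-y/2}\,e^{-\sqrt{p+1/4}\,|y|}\,\hat{\gamma}(p)/\sqrt{p+1/4}$, and read off $\hat{\nu}_y(p,0^{\pm}) = -\tfrac{1}{2}\hat{\nu}(p,0) \mp \hat{\gamma}(p)$ before inverting. The only difference there is cosmetic (you solve the ODE with the jump condition forward; the paper verifies the given formula backward), and your final remark about the kernel in (\ref{table-integral-derivative}) acting as an approximate identity is a legitimate way to justify the one-sided limits.

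The gap is in the regularity claim, specifically the $W^{1,\infty}(\mathbb{R}_+)$ part. You propose to bound $\|\nu_y(t,\cdot)\|_{L^\infty}$ by combining the heat-kernel norms (\ref{heat-kernel}), (\ref{heat-kernel-derivative}) with Young's inequality, but the convolution in (\ref{def-nu}) is a convolution in \emph{time}, so the only way those \emph{spatial} norms enter is through
\begin{equation*}
|\nu_y(t,y)| \leq \int_0^t |\gamma(\tau)|\, \|\partial_y G(t-\tau,\cdot)\|_{L^\infty(\mathbb{R})}\, d\tau \sim \int_0^t \frac{|\gamma(\tau)|}{t-\tau}\, d\tau,
\end{equation*}
which diverges; the paper explicitly flags this non-integrable singularity and gets around it by passing to $\tilde{\nu} = e^{y/2+t/4}\nu$, using parity to reduce $\tilde{\nu}_y$ to Dirichlet problems on quarter-planes, and invoking the maximum principle to obtain $\|\nu_y(t,\cdot)\|_{L^\infty(\mathbb{R}_+)} \leq \tfrac{1}{2}\|\nu(t,\cdot)\|_{L^\infty(\mathbb{R}_+)} + |\gamma(t)|$. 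Your plan as stated would therefore fail at this step. The fix is actually already in your last paragraph: for fixed $y \geq 0$ the derivative kernel satisfies $\int_0^\infty \bigl|\partial_y\bigl(\tfrac{2}{\sqrt{4\pi s}} e^{-(y+s)^2/(4s)}\bigr)\bigr|\, ds \leq e^{-y/2}\bigl(\int_0^\infty \tfrac{y}{2s\sqrt{\pi s}} e^{-y^2/(4s)} ds + \int_0^\infty \tfrac{1}{2\sqrt{\pi s}} e^{-s/4} ds\bigr) = 2 e^{-y/2} \leq 2$, by the $p=0$ case of (\ref{table-integral-derivative}); the unit-mass observation you use only for the boundary trace gives the uniform-in-$y$ bound $\|\nu_y(t,\cdot)\|_{L^\infty(\mathbb{R}_+)} \leq 2\sup_{0\leq\tau\leq t}|\gamma(\tau)|$ directly. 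You should either deploy that estimate for the $W^{1,\infty}(\mathbb{R}_+)$ claim or adopt the paper's maximum-principle argument; note also that in either version the bound is only available for $y \geq 0$ (the factor $e^{-y/2}$ is unbounded on $\mathbb{R}_-$), which is exactly why the lemma asserts $W^{1,\infty}(\mathbb{R}_+)$ rather than $W^{1,\infty}(\mathbb{R})$.
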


\begin{proof}
By using (\ref{laplace-transform}) and (\ref{table-integral}), we compute from (\ref{def-nu}):
	$$
	\hat{\nu}(p,y) = \mathcal{L}\left(\frac{1}{\sqrt{\pi t}}  e^{-\frac{(y+t)^2}{4 t}} \right)(p) \times \mathcal{L}(\gamma)(p)
	= e^{-\frac{y}{2}} \frac{e^{-\sqrt{p + \frac{1}{4}} |y|}}{\sqrt{p + \frac{1}{4}}} \hat{\gamma}(p),
	$$
where we have used properties of the Laplace transform, e.g. 
$$
\mathcal{L}(f(t) e^{-\frac{t}{4}})(p) = \hat{f}(p + \frac{1}{4})
\quad \mbox{\rm and} \quad 
\mathcal{L}\left( \int_0^t f(\tau) g(t-\tau) d\tau \right)(p) = \hat{f}(p) \hat{g}(p).
$$
Differentiations of $\hat{\nu}(p,y)$ in $y$ yield 
	\begin{eqnarray}
	\hat{\nu}_y & = & - \frac{1}{2} \hat{\nu} - {\rm sign}(y) e^{-\frac{y}{2}} e^{-\sqrt{p+\frac{1}{4}}|y|}\hat{\gamma}(p), \label{der-xi-1}\\
	\hat{\nu}_{yy} & = & -\frac{1}{2} \hat{\nu}_y - 2 \delta(y) \hat{\gamma}(p) + \frac{1}{2} {\rm sign}(y) e^{-\frac{y}{2}}  e^{-\sqrt{p+\frac{1}{4}} |y|} \hat{\gamma}(p)
	+ \left( p + \frac{1}{4} \right) \hat{\nu}. 	\label{der-xi-2} 
	\end{eqnarray}
Combining (\ref{der-xi-1}) and (\ref{der-xi-2}) yields
\begin{equation}
\hat{\nu}_{yy} = - \hat{\nu}_y - 2 \delta(y) \hat{\gamma}(p) + p \hat{\nu},
\end{equation}
which becomes the initial-value problem (\ref{eta-problem})
after the inverse Laplace transform. It follows from (\ref{der-xi-1}) for $p \geq 0$ that 
$$
\hat{\nu}_y(p,0^{\pm}) = -\frac{1}{2} \hat{\nu}(p,0) \mp \hat{\gamma}(p),
$$
which yields (\ref{bc-nu}) after the inverse Laplace transform. Uniqueness of the solution (\ref{def-nu}) is proven from uniqueness of the zero solution in the homogeneous version of the initial-value problem (\ref{eta-problem}).

Next, we estimate the solution (\ref{def-nu}) in Sobolev spaces 
provided that $\gamma \in C(\mathbb{R}_+)$. By using (\ref{heat-kernel}), we obtain
	\begin{eqnarray}
	\label{nu-1}
		\| \nu(t,\cdot) \|_{L^2(\mathbb{R})} \leq \frac{2}{(8 \pi)^{1/4}}
		\int_0^t \frac{|\gamma(\tau)|}{(t - \tau)^{1/4}} d \tau
	\end{eqnarray}
and
	\begin{eqnarray}
	\label{nu-2}
	\| \nu(t,\cdot) \|_{L^{\infty}(\mathbb{R})} \leq 
	\frac{1}{\sqrt{\pi}}
	\int_0^t \frac{|\gamma(\tau)|}{(t - \tau)^{1/2}} d \tau.
\end{eqnarray}
The derivative $\nu(t,y)$ in $y$ is given by
	\begin{eqnarray}
		\nu_y(t,y) = -\int_0^t \frac{\gamma(\tau) (y+t-\tau)}{\sqrt{4\pi(t-\tau)^3}} e^{-\frac{(y+t-\tau)^2}{4(t-\tau)}} d \tau.
	\end{eqnarray}
By using (\ref{heat-kernel-derivative}), we obtain 
	\begin{eqnarray}
	\label{nu-3}
		\| \nu_y(t,\cdot) \|_{L^2(\mathbb{R})} & \leq & \frac{1}{(8 \pi)^{1/4}}
		\int_0^t \frac{|\gamma(\tau)|}{(t - \tau)^{3/4}} d \tau,
	\end{eqnarray}
hence $\nu \in C(\mathbb{R}_+,H^1(\mathbb{R}))$ if $\gamma \in C(\mathbb{R}_+)$. By Sobolev embedding, $\nu \in C(\mathbb{R}_+,L^{\infty}(\mathbb{R}))$, which also follows from (\ref{nu-2}). 

It remains to show that $\nu_y \in	C(\mathbb{R}_+,L^{\infty}(\mathbb{R}_+))$. Due to (\ref{heat-kernel-derivative-last}), estimates on $\| \nu_y(t,\cdot) \|_{L^{\infty}(\mathbb{R})}$ which are similar to (\ref{nu-2}) produce a non-integrable singularity in the convolution integral in time. Nevertheless, we show 
hereafter that 
$\| \nu_y(t,\cdot) \|_{L^{\infty}(\mathbb{R}_+)}$ can be estimated 
in terms of $|\gamma(t)|$.
	
The initial-value problem (\ref{eta-problem}) can be rewritten in the piecewise form:
	\begin{equation}
	\label{eta-problem-plus-minus}
	\left\{ \begin{array}{ll}
	\nu_t = \nu_y + \nu_{yy}, \quad & \pm y > 0, \quad t > 0, \\
	\nu_y(t,0^+) - \nu_y(t,0^-) = -2 \gamma(t), \quad & t > 0, \\
	\nu(0,y) = 0, & y \in \mathbb{R}. \end{array} \right.
	\end{equation}
	With the transformation 
	$$
	\nu(t,y) = e^{-\frac{y}{2} - \frac{t}{4}} \tilde{\nu}(t,y),
	$$
the initial-boundary-value problem (\ref{eta-problem-plus-minus}) is equivalently written as 
	\begin{equation}
	\label{eta-problem-tilde}
	\left\{ \begin{array}{ll}
	\tilde{\nu}_t = \tilde{\nu}_{yy}, \quad & \pm y > 0, \quad t > 0, \\
	\tilde{\nu}_y(t,0^+) - \tilde{\nu}_y(t,0^-) = -2 \gamma(t) e^{\frac{t}{4}}, \quad & t > 0, \\
	\tilde{\nu}(0,y) = 0, & y \in \mathbb{R}. \end{array} \right.
	\end{equation}
	Due to the parity symmetry of the boundary and initial conditions in (\ref{eta-problem-tilde}), $\tilde{\nu}$ is even in $y$, 
	$\tilde{\nu}_y$ is odd in $y$, so that 
	$\tilde{\nu}_y$ solves Dirichlet's problems for the diffusion 
	equation on the quarter planes $\{ y > 0, \;\; t > 0 \}$ 
	and $\{ y < 0, \;\; t > 0 \}$ subject to the boundary conditions $\tilde{\nu}_y(t,0^+) = - \gamma(t) e^{\frac{t}{4}}$ and 
	$\tilde{\nu}_y(t,0^-) = \gamma(t) e^{\frac{t}{4}}$ respectively. It follows by the maximum principle that
	\begin{equation}
	\label{estimates-half-whole}
	\| \tilde{\nu}_y(t,\cdot) \|_{L^{\infty}(\mathbb{R})} \leq |\gamma(t)| e^{\frac{t}{4}}, \quad t > 0,
		\end{equation}
	which yields 
\begin{equation}
\label{nu-4}
	\| \nu_y(t,\cdot) \|_{L^{\infty}(\mathbb{R}_+)} \leq \frac{1}{2} \| \nu(t,\cdot) \|_{L^{\infty}(\mathbb{R}_+)} + |\gamma(t)|, \quad t > 0,
\end{equation}
since $\nu_y + \frac{1}{2} \nu = e^{-\frac{y}{2} - \frac{t}{4}} \tilde{\nu}_y$ and $e^{-\frac{y}{2}} \leq 1$ for $y \geq 0$. Hence, $\nu_y \in	C(\mathbb{R}_+,L^{\infty}(\mathbb{R}_+))$.
\end{proof}

\begin{remark}
	Since $e^{-\frac{y}{2}}$ is unbounded for $y \in \mathbb{R}_-$, no bound on 
	$\| \nu_y(t,\cdot) \|_{L^{\infty}(\mathbb{R}_-)}$ can be obtained from the estimate (\ref{estimates-half-whole}). However, we only need to use $\nu(t,y)$ for $t > 0$ and $y > 0$.
\end{remark}

Next, we consider inverting the linear equation 
\begin{eqnarray}
\label{eq-Abel}
\mathcal{M}(\gamma) = \frac{1}{\sqrt{4 \pi t}} \int_0^{\infty} 
f(\eta) e^{-\frac{\eta^2}{4t}} d \eta, \quad t > 0,
\end{eqnarray}
where
\begin{eqnarray}
\label{operator-M}
\mathcal{M}(\gamma) := \int_0^t \frac{\gamma(\tau)}{\sqrt{\pi (t-\tau)}}  d\tau 
-\int_0^t \frac{\gamma(\tau)}{\sqrt{4 \pi (t-\tau)}}
	\int_0^{\infty} e^{-\frac{\eta}{2}} e^{-\frac{\eta^2}{4(t - \tau)}} d\eta  d \tau
\end{eqnarray}
and $f \in W^{1,\infty}(\mathbb{R}_+)$ is a given function. 
The invertion problem (\ref{eq-Abel}) is related to 
Abel's integral equation \cite{Tamarkin,Tonelli}. We use again the Laplace transform in time $t$, 
as is defined in (\ref{laplace-transform}). The following lemma gives the exact solution to the integral equation (\ref{eq-Abel}) in the space of bounded functions. 

\begin{lemma}
	\label{lem-Abel}
	For every $f \in W^{1,\infty}(\mathbb{R}_+)$ satisfying $f(0) = 0$, there exists the unique solution $\gamma \in L^{\infty}(\mathbb{R}_+)$ 
	to the integral equation (\ref{eq-Abel}) in the exact form:
	\begin{equation}
	\label{sol-Abel}
	\gamma(t) = \frac{1}{\sqrt{4\pi t}} \int_0^{\infty} 
	f(\eta) \left( \frac{\eta + t}{2t} \right) e^{-\frac{\eta^2}{4t}} d \eta, \quad t > 0,
	\end{equation}
	or, equivalently, 
	\begin{equation}
	\label{sol-Abel-equivalent}
	\gamma(t) = \frac{1}{\sqrt{4\pi t}} \int_0^{\infty} 
\left[ 	f'(\eta) + \frac{1}{2} 	f(\eta) \right] e^{-\frac{\eta^2}{4t}} d \eta, \quad t > 0,
	\end{equation}
\end{lemma}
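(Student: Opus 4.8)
The plan is to solve the integral equation (\ref{eq-Abel}) by means of the Laplace transform (\ref{laplace-transform}) in $t$, exactly as in the proof of Lemma \ref{prop-tech}, and then to verify directly that the candidate (\ref{sol-Abel}) has the correct transform. First I would observe that both terms of the operator $\mathcal{M}$ in (\ref{operator-M}) are convolutions in $t$. The first kernel is $\frac{1}{\sqrt{\pi t}}$, whose Laplace transform is $\frac{1}{\sqrt{p}}$ by (\ref{table-integral}) with $y = 0$. The second kernel is $K(t) := \int_0^{\infty} e^{-\eta/2} G(t,\eta)\, d\eta$; since $\mathcal{L}(G(t,\eta))(p) = \frac{1}{2\sqrt{p}} e^{-\sqrt{p}\,\eta}$ for $\eta > 0$ by (\ref{table-integral}), interchanging the order of integration gives
\[
\hat{K}(p) = \frac{1}{2\sqrt{p}} \int_0^{\infty} e^{-(\frac{1}{2} + \sqrt{p})\eta}\, d\eta = \frac{1}{\sqrt{p}\,(1 + 2\sqrt{p})}.
\]
By the convolution theorem, the Laplace transform of $\mathcal{M}(\gamma)$ is therefore
\[
\left[ \frac{1}{\sqrt{p}} - \frac{1}{\sqrt{p}\,(1 + 2\sqrt{p})} \right] \hat{\gamma}(p) = \frac{2}{1 + 2\sqrt{p}}\, \hat{\gamma}(p).
\]

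Next I would transform the right-hand side of (\ref{eq-Abel}), which equals $\int_0^{\infty} f(\eta)\, G(t,\eta)\, d\eta$, obtaining $\frac{1}{2\sqrt{p}} \int_0^{\infty} f(\eta) e^{-\sqrt{p}\,\eta}\, d\eta$ by the same computation. Equating the two transforms and solving for $\hat{\gamma}$ gives
\[
\hat{\gamma}(p) = \left( \frac{1}{2} + \frac{1}{4\sqrt{p}} \right) \int_0^{\infty} f(\eta) e^{-\sqrt{p}\,\eta}\, d\eta.
\]
To confirm that (\ref{sol-Abel}) is the inverse transform, I would split $\frac{\eta + t}{2t} = \frac{\eta}{2t} + \frac{1}{2}$ and apply the two table relations term by term: the $\frac{\eta}{2t}$ piece transforms to $\frac{1}{2} \int_0^{\infty} f(\eta) e^{-\sqrt{p}\,\eta}\, d\eta$ via (\ref{table-integral-derivative}) (with ${\rm sign}(\eta) = +1$ for $\eta > 0$), while the $\frac{1}{2}$ piece transforms to $\frac{1}{4\sqrt{p}} \int_0^{\infty} f(\eta) e^{-\sqrt{p}\,\eta}\, d\eta$ via (\ref{table-integral}). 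These reproduce $\hat{\gamma}(p)$ exactly, and uniqueness of $\gamma \in L^{\infty}(\mathbb{R}_+)$ then follows from injectivity of the Laplace transform.

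The two forms (\ref{sol-Abel}) and (\ref{sol-Abel-equivalent}) are reconciled by integration by parts based on $\partial_\eta e^{-\eta^2/(4t)} = -\frac{\eta}{2t} e^{-\eta^2/(4t)}$; the boundary term at $\eta = 0$ vanishes precisely because $f(0) = 0$, and the term at $\eta = +\infty$ vanishes because $f$ is bounded. I expect the boundedness claim $\gamma \in L^{\infty}(\mathbb{R}_+)$ to be the main obstacle: the representation (\ref{sol-Abel}) carries a factor $\frac{1}{t}$ and appears singular as $t \to 0^+$, so it is the equivalent form (\ref{sol-Abel-equivalent}) that must be used for estimates. From it, using $\frac{1}{\sqrt{4\pi t}} \int_0^{\infty} e^{-\eta^2/(4t)}\, d\eta = \frac{1}{2} \| G(t,\cdot) \|_{L^1(\mathbb{R})} = \frac{1}{2}$ by (\ref{heat-kernel}), I would obtain
\[
\| \gamma \|_{L^{\infty}(\mathbb{R}_+)} \leq \frac{1}{2} \left\| f' + \frac{1}{2} f \right\|_{L^{\infty}(\mathbb{R}_+)} < \infty,
\]
which is exactly where both hypotheses $f(0) = 0$ and $f \in W^{1,\infty}(\mathbb{R}_+)$ are genuinely needed.
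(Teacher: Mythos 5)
Your proposal is correct and follows essentially the same route as the paper: Laplace transform in $t$ using (\ref{table-integral}), evaluation of the exponential integral to get $\hat{\gamma}(p) = \left(\tfrac{1}{2} + \tfrac{1}{4\sqrt{p}}\right)\int_0^\infty f(\eta) e^{-\sqrt{p}\,\eta}\, d\eta$, inversion via (\ref{table-integral-derivative}), integration by parts using $f(0)=0$ to obtain (\ref{sol-Abel-equivalent}), and the same $L^\infty$ bound $\tfrac{1}{2}\|f'\|_{L^\infty} + \tfrac{1}{4}\|f\|_{L^\infty}$. The only cosmetic difference is that you verify the candidate by transforming it forward rather than formally inverting, which is a fine way to organize the same computation.
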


\begin{proof}
	By using (\ref{laplace-transform}) and (\ref{table-integral}), we rewrite the integral equation (\ref{eq-Abel}) in the product form:
	$$
	\frac{1}{\sqrt{p}} \hat{\gamma}(p) - \frac{1}{2 \sqrt{p}} \hat{\gamma}(p) \int_0^{\infty} e^{-\frac{\eta}{2}} e^{-\sqrt{p} \eta} d \eta = \frac{1}{2 \sqrt{p}} \int_0^{\infty} f(\eta) e^{-\sqrt{p} \eta} d\eta, \quad p > 0.
	$$
	Evaluating the integral gives the solution in the Laplace transform space:
	$$
	\hat{\gamma}(p) = \frac{1}{2} \int_0^{\infty} f(\eta) \left(1 + \frac{1}{2 \sqrt{p}} \right) e^{-\sqrt{p} \eta} d\eta.
	$$
	After the inverse Laplace transform, we obtain 
	the exact solution (\ref{sol-Abel}) with the use of (\ref{table-integral-derivative}). The equivalent form (\ref{sol-Abel-equivalent}) is obtained from (\ref{sol-Abel}) after 
	integration by parts if $f \in W^{1,\infty}(\mathbb{R}_+)$ and $f(0) = 0$.
	It follows from (\ref{sol-Abel-equivalent}) that 
	$$
	\sup_{t \geq 0} |\gamma(t)| \leq \frac{1}{2} \| f' \|_{L^{\infty}(\mathbb{R}_+)} + \frac{1}{4} \| f \|_{L^{\infty}(\mathbb{R}_+)},
	$$
so that $\gamma \in L^{\infty}(\mathbb{R}^+)$.
\end{proof}

Similarly to Lemma \ref{lem-Abel}, we consider inverting of the linear equations 
\begin{equation}
\label{eq-Abel-integral}
\mathcal{M}(\gamma) = \int_0^t \frac{1}{\sqrt{4 \pi (t-\tau)}} \int_0^{\infty} 
g(\tau,\eta) e^{-\frac{\eta^2}{4(t-\tau)}} d \eta d \tau, \quad t > 0
\end{equation}
and
\begin{equation}
\label{eq-Abel-integral-local}
\mathcal{M}(\gamma) = \int_0^t \frac{h(\tau)  d\tau}{\sqrt{4 \pi (t-\tau)}}, \quad t > 0
\end{equation}
where $\mathcal{M}(\gamma)$ is given by (\ref{operator-M}),
$g \in L^1(\mathbb{R}_+,L^{\infty}(\mathbb{R}_+)) \cap L^{\infty}(\mathbb{R}_+,L^{\infty}(\mathbb{R}_+))$
and $h \in L^1(\mathbb{R}_+) \cap L^{\infty}(\mathbb{R}_+)$ are given functions. 
The following lemma gives the exact solutions of the integral 
equations (\ref{eq-Abel-integral}) and (\ref{eq-Abel-integral-local}) 
in the space of bounded functions.

\begin{lemma}
	\label{lem-Abel-integral}
	For every $g \in L^1(\mathbb{R}_+,L^{\infty}(\mathbb{R}_+)) \cap L^{\infty}(\mathbb{R}_+,L^{\infty}(\mathbb{R}_+))$, there exists the unique solution $\gamma \in L^{\infty}(\mathbb{R}_+)$ 
	to the integral equation (\ref{eq-Abel-integral}) in the exact form:
	\begin{equation}
	\label{sol-Abel-integral}
	\gamma(t) = \int_0^t \frac{1}{\sqrt{4 \pi (t-\tau)}} \int_0^{\infty} 
	g(\tau,\eta) \left( \frac{\eta + t - \tau}{2(t - \tau)} \right) e^{-\frac{\eta^2}{4(t-\tau)}} d \eta d\tau, \quad t > 0.
	\end{equation}
For every $h \in L^1(\mathbb{R}_+) \cap L^{\infty}(\mathbb{R}_+)$, there exists the unique solution $\gamma \in L^{\infty}(\mathbb{R}_+)$ 
	to the integral equation (\ref{eq-Abel-integral-local}) in the exact form:
	\begin{equation}
	\label{sol-Abel-integral-local}
	\gamma(t) = \frac{1}{2} h(t) + \frac{1}{2} \int_0^t \frac{h(\tau)  d\tau}{\sqrt{4 \pi (t-\tau)}}, \quad t > 0.
	\end{equation}
\end{lemma}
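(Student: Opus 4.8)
The plan is to follow the same Laplace-transform strategy used in the proof of Lemma \ref{lem-Abel}, since both integral equations (\ref{eq-Abel-integral}) and (\ref{eq-Abel-integral-local}) share the identical left-hand operator $\mathcal{M}$ given by (\ref{operator-M}). The crucial observation is that the computation in the proof of Lemma \ref{lem-Abel} already identifies the symbol of $\mathcal{M}$: applying (\ref{laplace-transform}) and (\ref{table-integral}) to (\ref{operator-M}) and evaluating $\int_0^\infty e^{-\eta/2} e^{-\sqrt{p}\,\eta}\,d\eta = (\tfrac12 + \sqrt p)^{-1}$ yields
\begin{equation*}
\mathcal{L}(\mathcal{M}(\gamma))(p) = \frac{2 \hat\gamma(p)}{1 + 2\sqrt{p}}, \quad p > 0.
\end{equation*}
Once this identity is in hand, each equation reduces to dividing by the symbol $\tfrac{2}{1+2\sqrt p}$ and inverting the result term by term with (\ref{table-integral}) and (\ref{table-integral-derivative}).

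For the first equation (\ref{eq-Abel-integral}), I would first compute the Laplace transform of the right-hand side. Treating it as a convolution in $t$ for each fixed $\eta$ and using $\mathcal{L}_t\big(\tfrac{1}{\sqrt{4\pi t}} e^{-\eta^2/(4t)}\big)(p) = \tfrac{1}{2\sqrt p} e^{-\sqrt p\, \eta}$, I obtain $\tfrac{1}{2\sqrt p}\int_0^\infty e^{-\sqrt p \, \eta}\hat g(p,\eta)\,d\eta$, where $\hat g(p,\eta) = \mathcal{L}_t(g(\cdot,\eta))(p)$. Equating this with $\tfrac{2\hat\gamma(p)}{1+2\sqrt p}$ and solving gives
\begin{equation*}
\hat\gamma(p) = \left( \frac{1}{2} + \frac{1}{4\sqrt p}\right)\int_0^\infty e^{-\sqrt p\,\eta}\hat g(p,\eta)\,d\eta.
\end{equation*}
Recognizing $\tfrac12 e^{-\sqrt p\eta}$ and $\tfrac{1}{4\sqrt p} e^{-\sqrt p\eta}$ as the transforms (via (\ref{table-integral-derivative}) and (\ref{table-integral}) respectively) of the two pieces of the kernel $\tfrac{1}{\sqrt{4\pi s}}\big(\tfrac{\eta}{2s} + \tfrac12\big) e^{-\eta^2/(4s)}$ with $s = t - \tau$, the inverse transform produces exactly (\ref{sol-Abel-integral}). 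The second equation (\ref{eq-Abel-integral-local}) is handled identically: its right-hand side has transform $\tfrac{1}{2\sqrt p}\hat h(p)$, so $\hat\gamma(p) = \tfrac12 \hat h(p) + \tfrac{1}{4\sqrt p}\hat h(p)$, and inverting the two terms (the second again through $\mathcal{L}(\tfrac{1}{\sqrt{\pi t}}) = p^{-1/2}$) gives (\ref{sol-Abel-integral-local}).

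The main work, and the place I expect the genuine obstacle, is showing that the resulting $\gamma$ lies in $L^\infty(\mathbb{R}_+)$, since the kernel carries a nonintegrable $(t-\tau)^{-1/2}$ singularity and a bound uniform in $t$ is required. The key estimate is that for every $h \in L^1(\mathbb{R}_+) \cap L^\infty(\mathbb{R}_+)$ and every $t > 0$,
\begin{equation*}
\int_0^t \frac{|h(\tau)|}{\sqrt{t-\tau}}\,d\tau \leq 2 M \sqrt{a} + \frac{N}{\sqrt a} \leq 2\sqrt{2\,\|h\|_{L^\infty}\|h\|_{L^1}},
\end{equation*}
obtained by splitting the integral at $\tau = t - a$, bounding the near-diagonal part by $M = \|h\|_{L^\infty}$ and the far part by $N/\sqrt a$ with $N = \|h\|_{L^1}$, and then optimizing over $a > 0$. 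For (\ref{sol-Abel-integral-local}) this yields the claimed bound directly. For (\ref{sol-Abel-integral}) I would first carry out the Gaussian $\eta$-integral, using $\int_0^\infty \tfrac{1}{\sqrt{4\pi s}}\big(\tfrac{\eta}{2s}+\tfrac12\big) e^{-\eta^2/(4s)}\,d\eta = \tfrac{1}{\sqrt{4\pi s}} + \tfrac14$, so that $|\gamma(t)|$ is controlled by $\int_0^t \|g(\tau,\cdot)\|_{L^\infty}\big(\tfrac{1}{\sqrt{4\pi(t-\tau)}} + \tfrac14\big)\,d\tau$; the singular part is bounded by the estimate above applied to $h(\tau) = \|g(\tau,\cdot)\|_{L^\infty} \in L^1 \cap L^\infty$, and the remaining part by $\tfrac14\|g\|_{L^1(\mathbb{R}_+,L^\infty)}$. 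Finally, uniqueness in both cases follows from the symbol identity: if $\mathcal{M}(\gamma) = 0$, then $\hat\gamma(p) = 0$ for all $p > 0$, whence $\gamma = 0$ by injectivity of the Laplace transform.
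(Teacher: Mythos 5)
Your proposal is correct and follows essentially the same Laplace-transform route as the paper: the symbol computation, the inversion via (\ref{table-integral}) and (\ref{table-integral-derivative}), and the $L^\infty$ bound (your split-and-optimize estimate is just an explicit form of Lemma \ref{lem-tech} with $s=1/2$) all match. The only divergence is that for (\ref{eq-Abel-integral-local}) you invert directly in Laplace space rather than using the paper's decomposition $\gamma = \tfrac12 h + \upsilon$, but the paper's own remark after the lemma already identifies these as equivalent.
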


\begin{proof}
	By using (\ref{laplace-transform}) and (\ref{table-integral}), we solve the integral equation (\ref{eq-Abel-integral}) for the Laplace transform:
$$
\hat{\gamma}(p) = \frac{1}{2} \int_0^{\infty} \hat{g}(p,\eta) \left(1 + \frac{1}{2 \sqrt{p}} \right) e^{-\sqrt{p} \eta} d\eta.
$$
After the inverse Laplace transform, we obtain 
the exact solution (\ref{sol-Abel-integral}) with the use of (\ref{table-integral-derivative}). By using the first integrals 
in (\ref{heat-kernel}) and (\ref{heat-kernel-derivative}), 
we obtain 
$$
\sup_{t \geq 0} |\gamma(t)| \leq \frac{1}{4} \int_0^t  \| g(\tau,\cdot) \|_{L^{\infty}(\mathbb{R}_+)}  d\tau 
+ \frac{1}{2} \int_0^t \frac{\| g(\tau,\cdot) \|_{L^{\infty}(\mathbb{R}_+)}  d\tau}{\sqrt{\pi(t-\tau)}},
$$
where upper bound is bounded if 
$\| g(t,\cdot) \|_{L^{\infty}(\mathbb{R}_+)}$ belongs to $L^1(\mathbb{R}_+) \cap L^{\infty}(\mathbb{R}_+)$.

For the integral equation (\ref{eq-Abel-integral-local}), we use the substitution $\gamma(t) = \frac{1}{2} h(t) + \upsilon(t)$, where 
$\upsilon(t)$ satisfies the integral equation
$$
\mathcal{M}(\upsilon) = \frac{1}{2} \int_0^t \frac{h(\tau)}{\sqrt{4 \pi (t-\tau)}} \int_0^{\infty} 
e^{-\frac{\eta}{2}} e^{-\frac{\eta^2}{4(t-\tau)}} d \eta d \tau, \quad t > 0.
$$
Since $g(\tau,\eta) := \frac{1}{2} h(\tau) e^{-\frac{\eta}{2}}$ belongs to $L^1(\mathbb{R}_+,L^{\infty}(\mathbb{R}_+)) \cap L^{\infty}(\mathbb{R}_+,L^{\infty}(\mathbb{R}_+))$, we can use the exact solution (\ref{sol-Abel-integral}) and obtain 
$$
\upsilon(t) = \frac{1}{2} \int_0^t \frac{h(\tau)}{\sqrt{4 \pi (t-\tau)}} \int_0^{\infty} 
e^{-\frac{\eta}{2}} \left( \frac{\eta + t - \tau}{2(t - \tau)} \right) e^{-\frac{\eta^2}{4(t-\tau)}} d \eta d\tau, \quad t > 0.
$$
Integrating by parts gives 
$$
\upsilon(t) = \frac{1}{2} \int_0^t \frac{h(\tau) d\tau}{\sqrt{4 \pi (t-\tau)}}, \quad t > 0,
$$
which recovers (\ref{sol-Abel-integral-local}) for $\gamma(t) = \frac{1}{2} h(t) + \upsilon(t)$. Again, we have $\gamma \in L^{\infty}(\mathbb{R}_+)$ if $h \in L^1(\mathbb{R}_+) \cap L^{\infty}(\mathbb{R}_+)$.
\end{proof}

\begin{remark}
	Compared to the decomposition method $\gamma = \frac{1}{2} h + \upsilon$ in the proof of Lemma \ref{lem-Abel-integral}, the exact solution (\ref{sol-Abel-integral-local}) can be independently obtained by using the Laplace transform (\ref{laplace-transform}) in the linear equation (\ref{eq-Abel-integral-local}).
\end{remark}

Finally, Young's inequality (\ref{Young}) for convolution integrals in space can be extended to the convolution integrals in time:
\begin{equation}
\label{Young-time}
\| \beta \star \gamma \|_{L^r(\mathbb{R}_+)} \leq \| \beta \|_{L^p(\mathbb{R}_+)} \| \gamma \|_{L^q(\mathbb{R}_+)}, \quad p,q,r \geq 1, \quad 1 + \frac{1}{r} = \frac{1}{p} + \frac{1}{q},
\end{equation}
for every $\beta \in L^p(\mathbb{R}_+)$ and $\gamma \in L^q(\mathbb{R}_+)$, 
where $(\beta \star \gamma)(t) := \int_{0}^{t} \beta(t-\tau) \gamma(\tau) d\tau$ is the convolution integral in time. The following lemma gives useful bounds.

\begin{lemma}
\label{lem-tech}
For every $\gamma \in L^1(\mathbb{R}_+) \cap L^{\infty}(\mathbb{R}_+)$ and every $s \in [0,1)$, there exists a positive constant $C_s$ such that 
\begin{equation}
\label{bound-gamma}
\int_0^t \frac{|\gamma(\tau)|}{(t-\tau)^{s}} d \tau \leq C_s \| \gamma \|_{L^1(\mathbb{R}_+) \cap L^{\infty}(\mathbb{R}_+)}, \quad t > 0. 
\end{equation}
\end{lemma}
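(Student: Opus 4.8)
The plan is to prove the bound by a direct splitting of the convolution integral, since the kernel $(t-\tau)^{-s}$ is not integrable over $\mathbb{R}_+$ and hence the time-Young inequality (\ref{Young-time}) cannot be applied directly. Indeed, near $\tau = t$ the kernel has an integrable singularity (because $s < 1$), whereas for $t - \tau$ large it decays only like a non-integrable power; the two regimes must be controlled by different norms of $\gamma$.

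First I would substitute $u = t - \tau$ to rewrite the left-hand side of (\ref{bound-gamma}) as $\int_0^t |\gamma(t-u)|\, u^{-s}\, du$, and then split the domain at $u = 1$. On the inner region $u \in (0, \min\{t,1\})$, I would bound $|\gamma(t-u)|$ by $\|\gamma\|_{L^\infty(\mathbb{R}_+)}$ and integrate the kernel explicitly, using $\int_0^1 u^{-s}\,du = (1-s)^{-1}$, which is finite precisely because $s \in [0,1)$. This handles the singularity and produces the term $(1-s)^{-1}\|\gamma\|_{L^\infty(\mathbb{R}_+)}$, uniformly in $t > 0$.

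On the outer region $u \in (1, t)$, which is nonempty only when $t > 1$, I would instead use $u^{-s} \le 1$ and revert to the variable $\tau = t - u$, bounding the integral by $\int_0^{t-1} |\gamma(\tau)|\, d\tau \le \|\gamma\|_{L^1(\mathbb{R}_+)}$. Adding the two contributions gives (\ref{bound-gamma}) with $C_s = 1 + (1-s)^{-1}$, after recalling the convention $\|\gamma\|_{L^1(\mathbb{R}_+) \cap L^{\infty}(\mathbb{R}_+)} = \max\{\|\gamma\|_{L^1(\mathbb{R}_+)}, \|\gamma\|_{L^\infty(\mathbb{R}_+)}\}$.

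The only subtlety — and the point where the two-norm assumption is genuinely needed — is that neither norm alone suffices: boundedness controls the singular near-diagonal part but fails in the tail, while integrability controls the tail but not the singularity. The split at $u = 1$ is exactly what decouples these two effects, and the choice of splitting point is immaterial as long as it is a fixed positive constant. I do not expect any serious obstacle here; the estimate is elementary once the splitting is set up correctly.
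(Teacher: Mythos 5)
Your proof is correct and follows essentially the same route as the paper's: split the convolution at unit distance from the singularity, control the near-diagonal part by $\|\gamma\|_{L^\infty(\mathbb{R}_+)}$ times $\int_0^1 u^{-s}\,du = (1-s)^{-1}$, and control the tail by $\|\gamma\|_{L^1(\mathbb{R}_+)}$ after bounding the kernel by $1$. The only cosmetic difference is that the paper first treats $t\in[0,T]$ separately and then $t>T$ with $T>1$, whereas your single split at $u=\min\{t,1\}$ handles both cases at once and yields the explicit constant $C_s = 1+(1-s)^{-1}$.
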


\begin{proof}
	For every fixed $T > 0$, it is obvious that 
	$$
	\int_0^t \frac{|\gamma(\tau)|}{(t-\tau)^{s}} d \tau \leq \frac{T^{1-s}}{1-s} \| \gamma \|_{L^{\infty}(\mathbb{R}_+)}, \quad t \in [0,T]. 
	$$
	Then, provided $T > 1$, we get the bounds
\begin{eqnarray*}
\int_0^t \frac{|\gamma(\tau)|}{(t-\tau)^{s}} d \tau 
& = & \int_0^{t-1} \frac{|\gamma(\tau)|}{(t-\tau)^{s}} d \tau + \int_{t-1}^t \frac{|\gamma(\tau)|}{(t-\tau)^{s}} d \tau \\
& \leq & \| \gamma \|_{L^1(\mathbb{R}_+)} + \frac{1}{1-s} \| \gamma \|_{L^{\infty}(\mathbb{R}_+)}, \quad t > T,
\end{eqnarray*}	
and the bound (\ref{bound-gamma}) holds.
\end{proof}

\section{Asymptotic stability under odd perturbations}
\label{section-odd}

Here we study the boundary-value problem (\ref{Burgers-half}) in order to prove Theorem \ref{theorem-1}. The boundary-value problem (\ref{Burgers-half}) is solved by direct methods. First, we decompose
\begin{equation}
\label{decomposition-odd}
w(t,x) = W_0(x) + u(t,x), \quad x > 0,
\end{equation}
where $W_0(x) = 1 - e^{-x}$ is the viscous shock given by (\ref{shock}) with $c = 0$ under the normalization $W_+ = -W_- = 1$. The perturbation $u(t,x)$ satisfies the following boundary-value problem:
\begin{equation}
\label{Burgers-odd}
\left\{ \begin{array}{ll}
u_t = u_x + u_{xx}, \quad & x > 0, \;\; t > 0, \\
u(t,0) = 0, & t > 0, \\
u(t,x) \to 0 \quad \mbox{\rm as} \;\; x \to +\infty, & t > 0, \end{array} \right.
\end{equation}
subject to the initial condition $u(0,x) = w(0,x) - W_0(x) =: u_0(x)$.

In order to prove Theorem \ref{theorem-1}, we first derive a priori energy estimates (Lemma \ref{lemma-1}) and then explore the exact formula  (\ref{u-tilde-sol}) to study the solution in $H^2$ (Lemma \ref{lemma-2}) and in $W^{2,\infty}$ (Lemma \ref{lemma-3}). 

The following lemma implies that the $H^1$-norm of a
smooth solution $u(t,\cdot)$ is decreasing in time $t$. The result is obtained by 
using a priori energy estimates. 

\begin{lemma}
\label{lemma-1} Assume existence of the solution
$u \in C(\mathbb{R}_+,H^2(\mathbb{R}_+))$
to the boundary-value problem (\ref{Burgers-odd}) with the
initial condition $u(0,x) = u_0(x)$. Then, for every $t > 0$:
$$
\| u(t,\cdot) \|_{L^2} \leq \| u_0 \|_{L^2}, \quad
\| u(t,\cdot) \|_{H^1} \leq \| u_0 \|_{H^1}.
$$
\end{lemma}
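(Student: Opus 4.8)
The plan is to derive both bounds from standard energy estimates on the half-line, handling the $L^2$ and $H^1$ norms separately and combining them at the end. Throughout I would use that $u(t,\cdot)\in H^2(\mathbb{R}_+)$ embeds into $C^1$ with $u(t,x),u_x(t,x)\to 0$ as $x\to+\infty$, and that $u_t = u_x+u_{xx}\in C(\mathbb{R}_+,L^2(\mathbb{R}_+))$, so that $t\mapsto \|u(t,\cdot)\|_{L^2}^2$ and $t\mapsto\|u_x(t,\cdot)\|_{L^2}^2$ are differentiable and differentiation may be exchanged with integration in $x$.

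For the $L^2$ bound I would multiply the equation in (\ref{Burgers-odd}) by $u$ and integrate over $x>0$. The transport term integrates to $\int_0^\infty u u_x\,dx=\tfrac12[u^2]_0^\infty=-\tfrac12 u(t,0)^2=0$ by the Dirichlet condition $u(t,0)=0$ and the decay at infinity, while integration by parts in the diffusion term gives $\int_0^\infty u u_{xx}\,dx=-\|u_x\|_{L^2}^2$, the boundary contributions vanishing again by $u(t,0)=0$ and decay. Hence $\tfrac12\frac{d}{dt}\|u\|_{L^2}^2=-\|u_x\|_{L^2}^2\le 0$, which yields the first inequality.

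For the $H^1$ bound I would multiply the equation by $-u_{xx}$ and integrate. After integrating the time term by parts once — the boundary term $[u_t u_x]_0^\infty$ vanishing because $u(t,0)=0$ forces $u_t(t,0)=0$ — the left-hand side becomes $\tfrac12\frac{d}{dt}\|u_x\|_{L^2}^2$. The transport term contributes $-\int_0^\infty u_x u_{xx}\,dx=\tfrac12 u_x(t,0)^2$ and the diffusion term contributes $-\|u_{xx}\|_{L^2}^2$, so that
$$
\frac12\frac{d}{dt}\|u_x\|_{L^2}^2=\frac12 u_x(t,0)^2-\|u_{xx}\|_{L^2}^2 .
$$

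The main obstacle is the positive boundary term $\tfrac12 u_x(t,0)^2$, which is an artifact of working on the half-line (it is absent on $\mathbb{R}$) and is not manifestly controlled. I would absorb it using the identity $u_x(t,0)^2=-\int_0^\infty\partial_x(u_x^2)\,dx=-2\int_0^\infty u_x u_{xx}\,dx$ together with Cauchy--Schwarz and Young's inequality, giving $\tfrac12 u_x(t,0)^2\le\|u_x\|_{L^2}\|u_{xx}\|_{L^2}\le\tfrac12\|u_x\|_{L^2}^2+\tfrac12\|u_{xx}\|_{L^2}^2$. Adding the $L^2$ identity to the $H^1$ identity then yields $\tfrac12\frac{d}{dt}\|u\|_{H^1}^2\le-\tfrac12\|u_x\|_{L^2}^2-\tfrac12\|u_{xx}\|_{L^2}^2\le 0$, which gives the second inequality and shows the $H^1$-norm is nonincreasing. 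I note that the interface constraint (\ref{interface-half}), equivalently $u_{xx}(t,0)=-u_x(t,0)$, is available as an alternative route to bound $u_x(t,0)^2$, but the Cauchy--Schwarz argument already closes the estimate without invoking it.
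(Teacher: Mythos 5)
Your proposal is correct and follows essentially the same route as the paper: the same two energy identities obtained by testing with $u$ and with $u_{xx}$ (up to sign), and the same device for absorbing the boundary term, namely writing $u_x(t,0)^2=-2\int_0^\infty u_x u_{xx}\,dx$ and applying Cauchy--Schwarz with Young's inequality, which is precisely the paper's inequality (\ref{inequality-f}) applied to $f=u_x$. The only differences are cosmetic (testing with $-u_{xx}$ and integrating the time term by parts rather than with $u_{xx}$ directly, plus the explicit remark justifying differentiation of the norms).
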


\begin{proof}
Multiplying $u_t = u_x + u_{xx}$ by $u$ and $u_{xx}$ and integrating by parts yield
\begin{eqnarray}
\label{eq-1}
\frac{d}{dt} \| u(t,\cdot) \|^2_{L^2} & = & -2 \| u_x(t,\cdot) \|^2_{L^2}, \\
\frac{d}{dt} \| u_x(t,\cdot) \|^2_{L^2} & = & \left[ u_x(t,0) \right]^2 -2 \| u_{xx}(t,\cdot) \|^2_{L^2}.
\label{eq-2}
\end{eqnarray}
It follows from (\ref{eq-1}) that $\| u(t,\cdot) \|_{L^2} \leq \| u_0 \|_{L^2}$. By Sobolev embedding, it follows for every $f \in H^1(\mathbb{R}_+)$ that
\begin{equation}
\label{inequality-f}
[f(0)]^2 = -2 \int_0^{\infty} f(x) f'(x) dx \leq \| f \|_{H^1}^2,
\end{equation}
so that we obtain by adding both equations (\ref{eq-1}) and (\ref{eq-2}) 
together and using (\ref{inequality-f}) that 
\begin{eqnarray*}
\frac{d}{dt} \| u(t,\cdot) \|^2_{H^1} = \left[ u_x(t,0) \right]^2 -2 \| u_x(t,\cdot) \|^2_{H^1} \leq -\| u_x (t,\cdot)  \|^2_{H^1},
\end{eqnarray*}
hence $\| u(t,\cdot) \|_{H^1} \leq \| u_0 \|_{H^1}$.
\end{proof}

\begin{remark}
        By using the same method as in the proof of Lemma \ref{lemma-1}, one can derive 
	\begin{equation}
	\label{eq-3}
	\frac{d}{dt} \| u_{xx}(t,\cdot) \|^2_{L^2} = -\left[ u_{xx}(t,0) \right]^2 - 2 u_{xx}(t,0) u_{xxx}(t,0) - 2 \| u_{xxx}(t,\cdot) \|^2_{L^2}.
	\end{equation}
	By using also $u_x(t,0) + u_{xx}(t,0) = 0$ and $u_{tx}(t,0) = u_{xx}(t,0) + u_{xxx}(t,0)$ for smooth solutions, this balance equation can be rewritten to the form:
		\begin{equation}
	\label{eq-3a}
	\frac{d}{dt} \left( \| u_{xx}(t,\cdot) \|^2_{L^2} - [u_x(t,0)]^2 \right) = \left[ u_{x}(t,0) \right]^2 - 2 \| u_{xxx}(t,\cdot) \|^2_{L^2}.
	\end{equation}
	With an inequality similar to (\ref{inequality-f}), we derive 
	$$
	\| u(t,\cdot) \|_{H^2}^2 - [u_x(t,0)]^2 \leq \| u_0 \|_{H^2}^2 - [u_0'(0)]^2.
	$$
	However, due to the inequality (\ref{inequality-f}) this a priori energy estimate does not imply monotonicity of the $H^2$-norm of the smooth solutions of the boundary-value problem (\ref{Burgers-odd}).
\end{remark}

Lemma \ref{lemma-1} implies uniqueness and continuous dependence of solutions to the boundary-value problem (\ref{Burgers-odd}) with initial condition $u(0,x) = u_0(x)$. It remains to show existence of a solution $u \in C(\mathbb{R}_+,H^2(\mathbb{R}_+))$ for any given $u_0 \in H^2(\mathbb{R}_+)$.
The following lemma explores an explicit formula for solutions $u \in C(\mathbb{R}_+,H^2(\mathbb{R}_+))$ to the boundary-value problem (\ref{Burgers-odd}) for any given initial condition $u_0 \in H^2(\mathbb{R}_+)$.

\begin{lemma}
\label{lemma-2}
For any given $u_0 \in H^2(\mathbb{R}_+)$, there exists a solution
$u(t,x)$ to the boundary-value problem (\ref{Burgers-odd}) with the
initial condition $u(0,x) = u_0(x)$ given explicitly by
\begin{equation}
\label{sol-explicit}
u(t,x) = \frac{1}{\sqrt{4 \pi t}} \int_0^{\infty} u_0(y) \left[ e^{-\frac{(x-y+t)^2}{4t}} - e^{-x} e^{-\frac{(x+y-t)^2}{4t}} \right] dy.
\end{equation}
Moreover, $u \in C(\mathbb{R}_+,H^2(\mathbb{R}_+))$.
\end{lemma}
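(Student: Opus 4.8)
\emph{The plan is to remove the first-order drift by an exponential change of variables, solve the resulting heat equation on the half-line by the method of images, and then read off the regularity from the convolution structure.} First I would substitute $u(t,x) = e^{-x/2 - t/4} v(t,x)$ into the boundary-value problem (\ref{Burgers-odd}). A direct computation gives the identity $u_t - u_x - u_{xx} = e^{-x/2-t/4}(v_t - v_{xx})$, so that $u$ solves (\ref{Burgers-odd}) if and only if $v$ solves the pure-diffusion Dirichlet problem (\ref{Burgers-odd-tilde}) with $v(t,0) = 0$ and initial datum $v_0(x) = e^{x/2} u_0(x)$. Although $v_0$ grows exponentially, the method-of-images representation (\ref{u-tilde-sol}) was stated precisely for such a non-decaying class of data, and the Gaussian decay of the heat kernel guarantees convergence of all the integrals involved.

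Next I would substitute $v_0(y) = e^{y/2} u_0(y)$ into (\ref{u-tilde-sol}) and multiply by $e^{-x/2-t/4}$. Completing the square in the two Gaussian exponents through the identities
\begin{align*}
-\frac{x}{2} - \frac{t}{4} + \frac{y}{2} - \frac{(x-y)^2}{4t} &= -\frac{(x-y+t)^2}{4t}, \\
-\frac{x}{2} - \frac{t}{4} + \frac{y}{2} - \frac{(x+y)^2}{4t} &= -x - \frac{(x+y-t)^2}{4t},
\end{align*}
reproduces the explicit formula (\ref{sol-explicit}) verbatim. That (\ref{sol-explicit}) indeed solves the problem then follows termwise: the equation $u_t = u_x + u_{xx}$ holds by the transformation and $v_t = v_{xx}$; the boundary condition $u(t,0) = 0$ is automatic because the two Gaussian terms in the bracket of (\ref{sol-explicit}) coincide at $x = 0$; decay as $x \to +\infty$ follows from the Gaussian factors; and $u(0,x) = u_0(x)$ follows from the standard delta-approximation property of the heat kernel as $t \to 0^+$.

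The remaining task, and the main obstacle, is the regularity $u \in C(\mathbb{R}_+, H^2(\mathbb{R}_+))$. Here I would recognize the first term in (\ref{sol-explicit}) as the shifted half-line convolution $[G(t,\cdot) \ast (u_0 \chi_{\mathbb{R}_+})](x+t)$ and the second as its $e^{-x}$-weighted reflected image, and differentiate up to twice in $x$, transferring each $x$-derivative onto $u_0$ by integration by parts in $y$. Since $u_0(0) = 0$ for the odd perturbations considered here, the first integration produces no boundary term; the second produces boundary contributions proportional to $u_0'(0) G(t, x+t)$ from the direct term and to $-u_0'(0)\, e^{-x} G(t, t-x)$ from the reflected term. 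The key point is that these two singular contributions cancel identically, because the algebraic identity $e^{-x} G(t, t-x) = G(t, x+t)$ holds for all $x > 0$; this is exactly the interface structure built into the method of images.

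After this cancellation, every surviving piece of $\partial_x^k u(t,\cdot)$ for $k = 0,1,2$ is a half-line convolution of $u_0^{(k)}$ against $G(t,\cdot)$ (times a bounded factor $e^{-x} \le 1$), to which Young's inequality (\ref{Young}) with $\|G(t,\cdot)\|_{L^1(\mathbb{R})} = 1$ applies, yielding the uniform bound $\|u(t,\cdot)\|_{H^2(\mathbb{R}_+)} \le C \|u_0\|_{H^2(\mathbb{R}_+)}$ with no singularity as $t \to 0^+$. Continuity of $t \mapsto u(t,\cdot)$ into $H^2(\mathbb{R}_+)$, including at $t = 0$, then follows from the strong continuity of the shifted heat convolutions together with dominated convergence. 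I expect the bookkeeping of the boundary terms in the second derivative, and the verification of their exact cancellation via $e^{-x} G(t, t-x) = G(t, x+t)$, to be the delicate part of the argument; once it is in place, the $H^2$ estimate is routine.
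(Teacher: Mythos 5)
Your proposal is correct and takes essentially the same route as the paper's proof: the transformation $u = e^{-x/2-t/4}v$, the method of images for the Dirichlet heat problem, completing the square to arrive at (\ref{sol-explicit}), and then differentiating under the integral with integration by parts in $y$ (using $u_0(0)=0$) followed by the generalized Young inequality with $\| G(t,\cdot)\|_{L^1}=1$ to get the uniform $H^2$ bound. The boundary-term cancellation you isolate via the identity $e^{-x}G(t,t-x)=G(t,x+t)$ is precisely what underlies the paper's formulas (\ref{sol-derivative})--(\ref{sol-second-derivative}), where it is used without comment.
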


\begin{proof}
By using the transformation
\begin{equation}
\label{u-tilde-u}
u(t,x) = e^{-\frac{x}{2}-\frac{t}{4}} v(t,x),
\end{equation}
we can write the boundary-value problem (\ref{Burgers-odd}) in the form (\ref{Burgers-odd-tilde}) 
with the initial condition $v_0(x) = e^{\frac{x}{2}} u_0(x)$, where $u_0(x) = u(0,x)$. By substituting the transformation (\ref{u-tilde-u}) to the exact solution (\ref{u-tilde-sol})
and completing squares for the heat kernel $G(t,x) = \frac{1}{\sqrt{4\pi t}} e^{-\frac{x^2}{4t}}$, we obtain the exact representation
(\ref{sol-explicit}). 

Next we show that  $u \in C(\mathbb{R}_+,H^2(\mathbb{R}_+))$ if $u_0 \in H^2(\mathbb{R}_+)$. The convolution integrals in (\ref{sol-explicit}) are analyzed by means of the
generalized Young's inequality (\ref{Young}) with $p = r = 2$ and $q = 1$:
\begin{eqnarray*}
\| u_0 \chi_{\mathbb{R}_+} \ast 
G(t,\cdot + t) \|_{L^2(\mathbb{R}_+)}  \leq  \| u_0 \|_{L^2(\mathbb{R}_+)} \| G(t,\cdot + t) \|_{L^1(\mathbb{R})} 
\leq \| u_0 \|_{L^2(\mathbb{R}_+)}
\end{eqnarray*}
and
\begin{eqnarray*}
\| u_0 \chi_{\mathbb{R}_+} \ast 
G(t,-\cdot + t) \|_{L^2(\mathbb{R}_+)} \leq \| u_0 \|_{L^2(\mathbb{R}_+)} \| G(t,-\cdot + t) \|_{L^1(\mathbb{R})} \leq \| u_0 \|_{L^2(\mathbb{R}_+)}
\end{eqnarray*}
At the same time, $e^{-x} \leq 1$ for $x \geq 0$, so that 
\begin{equation}
\label{bound-derivative-zero}
\| u(t,\cdot) \|_{L^2} \leq 2 \| u_0 \|_{L^2},
\end{equation}
where the $L^2$ norms are understood as $L^2(\mathbb{R}_+)$.
In order to obtain similar estimates for $u_x$ and $u_{xx}$,
we differentiate (\ref{sol-explicit}) in $x$, use integration by parts, and obtain
\begin{eqnarray}
\nonumber
u_x(t,x) & = & \frac{1}{\sqrt{4 \pi t}} \int_0^{\infty} u_0'(y) \left[ e^{-\frac{(x-y+t)^2}{4t}} + e^{-x} e^{-\frac{(x+y-t)^2}{4t}} \right] dy\\
\label{sol-derivative}
&& 
+ \frac{1}{\sqrt{4 \pi t}} e^{-x} \int_0^{\infty} u_0(y) e^{-\frac{(x+y-t)^2}{4t}} dy
\end{eqnarray}
and
\begin{eqnarray}
\nonumber
u_{xx}(t,x) & = & \frac{1}{\sqrt{4 \pi t}} \int_0^{\infty} u_0''(y) \left[ e^{-\frac{(x-y+t)^2}{4t}} -  e^{-x} e^{-\frac{(x+y-t)^2}{4t}} \right] dy \\
\label{sol-second-derivative}
& \phantom{t} & - \frac{1}{\sqrt{\pi t}} e^{-x} \int_0^{\infty} u_0'(y) e^{-\frac{(x+y-t)^2}{4t}} dy -
\frac{1}{\sqrt{4 \pi t}} e^{-x} \int_0^{\infty} u_0(y) e^{-\frac{(x+y-t)^2}{4t}} dy, \qquad \qquad 
\end{eqnarray}
where the boundary condition $u(t,0) = 0$ has been used.
By the same estimates used in (\ref{bound-derivative-zero}), we obtain:
\begin{eqnarray}
\label{bound-derivative-first}
\| u_x(t,\cdot) \|_{L^2} & \leq & 2 \| u_0' \|_{L^2} + \| u_0 \|_{L^2}, \\
\label{bound-derivative-second}
\| u_{xx}(t,\cdot) \|_{L^2} & \leq & 2 \| u_0'' \|_{L^2} + 2 \| u_0' \|_{L^2} + \| u_0 \|_{L^2}.
\end{eqnarray}
This shows that $u(t,\cdot) \in H^2(\mathbb{R}_+)$ continuously in $t \in \mathbb{R}_+$. It follows from (\ref{sol-derivative}) and (\ref{sol-second-derivative}) as $x \to 0^+$
that 
\begin{equation}
\label{bc-odd-satisfied}
u_{x}(t,0^+) + u_{xx}(t,0^+) = 0, \quad t > 0.
\end{equation}
The decay condition $u(t,x) \to 0$ as $x \to \infty$ is satisfied by
the continuous embedding of $H^2(\mathbb{R}_+)$ into $C^1(\mathbb{R}_+) \cap W^{1,\infty}(\mathbb{R}_+)$ with functions and their first derivatives decaying to zero at infinity.
\end{proof}

The following lemma establishes the decay of $\| u(t,\cdot) \|_{W^{2,\infty}}$ to zero as $t \to +\infty$.

\begin{lemma}
\label{lemma-3}
Let $u \in C(\mathbb{R}_+,H^2(\mathbb{R}_+))$ be the solution to the boundary-value problem (\ref{Burgers-odd}) given by Lemma \ref{lemma-2}.
Then, we have
\begin{equation}
\label{asym-stability-decay}
\| u(t,\cdot) \|_{W^{2,\infty}} \to 0 \quad \mbox{\rm as} \quad t \to \infty.
\end{equation}
\end{lemma}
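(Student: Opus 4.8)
The plan is to read off the decay directly from the explicit representations (\ref{sol-explicit}), (\ref{sol-derivative}), and (\ref{sol-second-derivative}), rather than from any energy identity; indeed the remark after Lemma \ref{lemma-1} shows that the $H^2$-norm need not even be monotone, so the kernel representation of Lemma \ref{lemma-2} is essential at this step. The key observation is that every integral appearing in these three formulas is a convolution of $u_0$, $u_0'$, or $u_0''$ (each extended by zero to $\mathbb{R}$ through $\chi_{\mathbb{R}_+}$) with a translated or reflected heat kernel $G(t,\cdot)$, possibly pre-multiplied by the factor $e^{-x}$, which satisfies $e^{-x} \le 1$ for $x \ge 0$ and hence only helps.

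First I would bound the $L^\infty$-norm of each such convolution by the generalized Young inequality (\ref{Young}) with $p = q = 2$ and $r = \infty$, which gives $\| f \ast G(t,\cdot) \|_{L^\infty} \le \| f \|_{L^2} \| G(t,\cdot) \|_{L^2}$. Translation and reflection leave the $L^2$-norm of the kernel unchanged, so by the second identity in (\ref{heat-kernel}) each factor contributes $\| G(t,\cdot) \|_{L^2} = (8\pi t)^{-1/4}$. Applying this term by term to (\ref{sol-explicit}), (\ref{sol-derivative}), and (\ref{sol-second-derivative}), and using $e^{-x}\le 1$ to discard the exponential prefactors, yields bounds of the schematic form
\begin{align*}
\| u(t,\cdot) \|_{L^\infty} &\le \frac{2}{(8\pi t)^{1/4}}\, \| u_0 \|_{L^2}, \\
\| u_x(t,\cdot) \|_{L^\infty} &\le \frac{1}{(8\pi t)^{1/4}} \left( 2\| u_0' \|_{L^2} + \| u_0 \|_{L^2} \right), \\
\| u_{xx}(t,\cdot) \|_{L^\infty} &\le \frac{1}{(8\pi t)^{1/4}} \left( 2\| u_0'' \|_{L^2} + 2\| u_0' \|_{L^2} + \| u_0 \|_{L^2} \right),
\end{align*}
where the numerical constants merely count the number of convolution terms in each formula (the prefactor $(\pi t)^{-1/2} = 2(4\pi t)^{-1/2}$ in front of the $u_0'$-integral in (\ref{sol-second-derivative}) contributes the extra factor of two). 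Since $u_0 \in H^2(\mathbb{R}_+)$, the three seminorms $\| u_0 \|_{L^2}$, $\| u_0' \|_{L^2}$, $\| u_0'' \|_{L^2}$ are finite, so each right-hand side is a fixed multiple of $t^{-1/4}$ and tends to zero as $t \to \infty$. Adding the three bounds gives (\ref{asym-stability-decay}).

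There is no essential obstacle here; the decay rate $t^{-1/4}$ is dictated entirely by $\| G(t,\cdot) \|_{L^2}$, and the only point requiring care is the bookkeeping of the extra boundary terms produced when (\ref{sol-explicit}) is differentiated, namely the $u_0$-integral in (\ref{sol-derivative}) and the $u_0'$- and $u_0$-integrals in (\ref{sol-second-derivative}) that arise from the $x$-dependence of the kernel argument $x+y-t$ and of the prefactor $e^{-x}$. Each of these is again an $L^2 \ast L^2$ convolution multiplied by $e^{-x} \le 1$, so it is covered by the same estimate and no new difficulty appears. I emphasize that this pointwise-in-time decay of the full $W^{2,\infty}$-norm is strictly stronger than what the energy method of Lemma \ref{lemma-1} provides, which is precisely why the explicit representation of Lemma \ref{lemma-2} is invoked here.
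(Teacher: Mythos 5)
Your proposal is correct and follows essentially the same route as the paper: the paper's proof also applies the generalized Young inequality (\ref{Young}) with $p=q=2$, $r=\infty$ to the convolution terms in (\ref{sol-explicit}), (\ref{sol-derivative}), and (\ref{sol-second-derivative}), uses $\|G(t,\cdot)\|_{L^2}=(8\pi t)^{-1/4}$ together with $e^{-x}\le 1$, and arrives at exactly the three $t^{-1/4}$ bounds you state. The only cosmetic difference is that the paper phrases the estimates for $t\ge 1$, which is immaterial for the limit $t\to\infty$.
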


\begin{proof}
For $t \geq 1$, we can estimate the convolution integrals in (\ref{sol-explicit}) by means of the
generalized Young's inequality (\ref{Young}) with $p = q = 2$ and $r = \infty$:
\begin{eqnarray*}
\| u_0 \chi_{\mathbb{R}_+} \ast 
G(t,\cdot + t) \|_{L^{\infty}(\mathbb{R}_+)} 
\leq \| u_0 \|_{L^2(\mathbb{R}_+)} \| G(t,\cdot + t) \|_{L^2(\mathbb{R})}
\leq \frac{1}{(8\pi t)^{1/4}} \| u_0 \|_{L^2(\mathbb{R}_+)}
\end{eqnarray*}
and
\begin{eqnarray*}
\| u_0 \chi_{\mathbb{R}_+} \ast 
G(t, - \cdot + t) \|_{L^{\infty}(\mathbb{R}_+)} \leq 
\| u_0 \|_{L^2(\mathbb{R}_+)} \| G(t,-\cdot + t) \|_{L^2(\mathbb{R})} 
\leq \frac{1}{(8\pi t)^{1/4}} \| u_0 \|_{L^2(\mathbb{R}_+)}.
\end{eqnarray*}
Using these estimates in (\ref{sol-explicit}), (\ref{sol-derivative}), and (\ref{sol-second-derivative}), we obtain
\begin{eqnarray*}
\| u(t,\cdot) \|_{L^{\infty}} & \leq & \frac{2}{(8\pi t)^{1/4}} \| u_0 \|_{L^2}, \\
\| u_x(t,\cdot) \|_{L^{\infty}} & \leq & \frac{1}{(8\pi t)^{1/4}} (2 \| u_0' \|_{L^2} + \| u_0 \|_{L^2}), \\
\| u_{xx}(t,\cdot) \|_{L^{\infty}} & \leq & \frac{1}{(8\pi t)^{1/4}} (2 \| u_0'' \|_{L^2} + 2 \| u_0' \|_{L^2} + \| u_0 \|_{L^2}),
\end{eqnarray*}
which prove the decay (\ref{asym-stability-decay}).
\end{proof}

\begin{remark}
	The asymptotic decay of the solution $u \in C(\mathbb{R}_+,H^2(\mathbb{R}_+))$ in the $H^2$ norm does not follow from the convolution estimates (\ref{Young}) unless $u_0^{\pm} \in W^{2,p}(\mathbb{R}_+)$ for some $p < 2$.
	\label{remark-decay-H2}
\end{remark}

\begin{proof1}{\em of Theorem \ref{theorem-1}.}
By Lemma \ref{lemma-2} and the bounds (\ref{bound-derivative-zero}), 
(\ref{bound-derivative-first}), and (\ref{bound-derivative-second}),
if $u_0 \in H^2(\mathbb{R}_+)$ satisfies $\| u_0 \|_{H^2} < \delta$ as
in (\ref{initial-1}), then
$$
\| u(t,\cdot) \|_{H^2} \leq C \| u_0 \|_{H^2} < C \delta
$$
for a fixed $\delta$-independent positive constant $C$. Hence, for every $\epsilon > 0$,
there is $\delta := \epsilon/C$ such that the odd perturbation $u(t,\cdot)$ to the viscous shock
$W_0$ in the decomposition (\ref{decomposition-odd}) is bounded in $H^2(\mathbb{R})$ norm for every $t > 0$ according to the bound (\ref{final-1}). 
The decay (\ref{scattering-1}) 
follows from the decay (\ref{asym-stability-decay}) in Lemma \ref{lemma-3}.

The constraint (\ref{interface-half}) is satisfied because 
both $W_0$ and $u$ in the decomposition (\ref{decomposition-odd}) satisfy this constraint. 
Under the constraint (\ref{interface-half}), the solutions 
$w(t,x) : \mathbb{R}_+ \times \mathbb{R}_+ \mapsto \mathbb{R}$ to the boundary-value problem (\ref{Burgers-half}) are extended to the 
odd function $w_{\rm ext}(t,x) : \mathbb{R}_+ \times \mathbb{R}\mapsto \mathbb{R}$ satisfying the interface condition (\ref{interface-half-extended}). 

It remains to verify that $w(t,x) = W_0(x) + u(t,x) > 0$ for every $x > 0$. The positivity
condition (\ref{w-positivity}) is necessary for reduction of the modular Burgers equation (\ref{Burgers})
with the odd functions to the boundary-value problem (\ref{Burgers-half}). By Sobolev embedding of
$H^2(\mathbb{R}_+)$ into $C^1(\mathbb{R}_+) \cap W^{1,\infty}(\mathbb{R}_+)$, we obtain the uniform bound:
\begin{eqnarray*}
\| u(t,\cdot) \|_{L^{\infty}} +  \| u_x(t,\cdot) \|_{L^{\infty}} < \epsilon, \quad t > 0,
\end{eqnarray*}
where $\epsilon$ is small. The symmetry point $x = 0$ is a simple root of $w(t,\cdot)$ for every $t > 0$
because $W_0(0) = 0$, $W_0'(0) = 1$, $u(t,0) = 0$, and $|u_x(t,0)| < \epsilon$ is small.
Therefore, there exists an $\epsilon$-independent $x_0 > 0$ such that $w(t,x) > 0$ for every $t > 0$ and $x \in (0,x_0)$.
Now, $W_0(x) \geq W_0(x_0) > 0$ for every $x \geq x_0$ and since
$|u(t,x)| < \epsilon$ for every $t > 0$ and $x > 0$, then $w(t,x) > 0$ for every $t > 0$ and $x \geq x_0$ if $\epsilon$ is sufficiently small.
Combining these two estimates together yields 
$w(t,x) > 0$ for every $t > 0$ and $x > 0$. 
\end{proof1}

\section{Asymptotic stability under general perturbations}
\label{section-general}

Here we study the boundary-value problem (\ref{Burgers-interface}) in order to prove Theorem \ref{theorem-2}. The boundary-value problem (\ref{Burgers-interface}) can be reformulated by using the decomposition
\begin{equation}
\label{decomposition-interface}
w(t,x) = W_0(x-\xi(t)) + u(t,x - \xi(t)), \quad x \in \mathbb{R},
\end{equation}
where $W_0$ is the viscous shock (\ref{shock}) with $c = 0$ under the normalization
$W_+ = -W_- = 1$, $\xi(t)$ is the location of a single interface,
and $u(t,y)$ with $y := x - \xi(t)$ is a perturbation satisfying
\begin{equation}
\label{Burgers-general}
\left\{ \begin{array}{l}
u_t = (\xi'(t) \pm 1) u_y + u_{yy} + \xi'(t) W_0'(y), \quad \pm y > 0, \\
u(t,0) = 0, \\
u(t,y) \to 0 \quad \mbox{\rm as} \;\; y \to \pm \infty, \end{array} \right.
\end{equation}
subject to the initial condition $u(0,x) = w(0,x) - W_0(x) =: u_0(x)$. We assume
without loss of generality that $\xi(0) = 0$. The interface dynamics is defined
by the following lemma.

\begin{lemma}
\label{lem-interface}
Let $u(t,\cdot) \in C^1(\mathbb{R}) \cap C^2(\mathbb{R} \backslash \{0\})$
be a solution of the boundary-value problem (\ref{Burgers-general}) for $t \in \mathbb{R}_+$.
Then, $\xi'(t)$, $t \in \mathbb{R}_+$ can be expressed in two equivalent ways by
\begin{equation}
\label{dynamics-general}
\xi'(t) = -\frac{u_y(t,0^+) + u_{yy}(t,0^+)}{1 + u_y(t,0^+)} = \frac{u_y(t,0^-) - u_{yy}(t,0^-)}{1 + u_y(t,0^-)}, \quad t \in \mathbb{R}_+.
\end{equation}
\end{lemma}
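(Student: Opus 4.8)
The plan is to derive each of the two expressions in (\ref{dynamics-general}) by taking a one-sided trace of the evolution equation (\ref{Burgers-general}) at the interface $y=0$. The crucial observation is that the boundary condition $u(t,0)=0$ holds for every $t\in\mathbb{R}_+$, so differentiating in $t$ gives $u_t(t,0)=0$. For a classical solution the partial derivative $u_t$ has one-sided limits at the interface: by the equation it equals $(\xi'(t)\pm1)u_y+u_{yy}+\xi'(t)W_0'(y)$, and under the assumed regularity $u(t,\cdot)\in C^1(\mathbb{R})\cap C^2(\mathbb{R}\setminus\{0\})$ the trace $u_y(t,0^\pm)$ is continuous while $u_{yy}(t,0^\pm)$ exists as a one-sided limit, so the right-hand side has limits as $y\to0^\pm$; hence $u_t(t,0^\pm)=0$.

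First I would record the values of the shock profile at the interface. Since $W_0(y)=1-e^{-y}$ for $y>0$ and $W_0(y)=e^{y}-1$ for $y<0$, one has $W_0'(0^+)=W_0'(0^-)=1$. Taking $y\to0^+$ in the equation $u_t=(\xi'(t)+1)u_y+u_{yy}+\xi'(t)W_0'(y)$ and using $u_t(t,0^+)=0$ together with $W_0'(0^+)=1$ yields
\begin{equation*}
0=(\xi'(t)+1)\,u_y(t,0^+)+u_{yy}(t,0^+)+\xi'(t),
\end{equation*}
which, after collecting the terms proportional to $\xi'(t)$, rearranges to the first formula in (\ref{dynamics-general}). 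The analogous limit $y\to0^-$ in $u_t=(\xi'(t)-1)u_y+u_{yy}+\xi'(t)W_0'(y)$ gives $0=(\xi'(t)-1)u_y(t,0^-)+u_{yy}(t,0^-)+\xi'(t)$, which rearranges to the second formula.

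Two points then need attention. The denominator $1+u_y(t,0^\pm)$ equals $W_0'(0^\pm)+u_y(t,0^\pm)=w_x(t,\xi(t))$, the slope of the full solution at the interface; since $u_y$ is continuous at $0$ and small in the regime of Theorem \ref{theorem-2}, it is bounded away from zero, so both formulas are well defined (the transversal crossing $w_x(t,\xi(t))\neq0$ is precisely what makes $\xi$ well defined). The equivalence of the two expressions is not an extra hypothesis: subtracting them and using the continuity $u_y(t,0^+)=u_y(t,0^-)$ reduces exactly to $[u_{yy}]^+_-(t,0)=-2u_y(t,0)$, which is the interface condition (\ref{interface-interface}). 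Equivalently, both representations can be obtained from the continuity relation (\ref{interface-continuity}) combined with the one-sided equations $w_t=\pm w_x+w_{xx}$, the interface condition guaranteeing that $w_t$ is continuous across $\xi(t)$ so that (\ref{interface-continuity}) is meaningful.

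I expect the only genuine subtlety to be the justification of $u_t(t,0^\pm)=0$, i.e.\ the interchange of the one-sided limit $y\to0^\pm$ with the time differentiation of the boundary trace. This rests on the classical regularity of the solution, under which $u_t$ inherits continuous one-sided limits at the interface from the right-hand side of (\ref{Burgers-general}); once this is in place the two formulas follow by purely algebraic rearrangement, and their coincidence is equivalent to the jump condition (\ref{interface-interface}).
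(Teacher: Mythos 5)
Your proposal is correct and follows essentially the same route as the paper: derive $u_t(t,0)=0$ from the boundary condition $u(t,0)=0$ (equivalently from (\ref{interface-continuity}) and the decomposition (\ref{decomposition-interface})), take the one-sided limits $y\to 0^{\pm}$ in the evolution equation using $W_0'(0)=1$, and observe that the two resulting formulas are consistent precisely because of the jump condition $[u_{yy}]^+_-(t,0)=-2u_y(t,0)$ together with the continuity of $u_y$. Your additional remarks on the nonvanishing denominator and the interchange of limits are welcome elaborations but do not change the argument.
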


\begin{proof}
It follows from (\ref{interface-interface}) and (\ref{decomposition-interface})
that piecewise $C^2$ solutions of the boundary-value problem
(\ref{Burgers-general}) satisfy the interface condition
\begin{equation}
\label{interface-general}
[u_{yy}]^+_-(t,0) = - 2 u_y(t,0).
\end{equation}
On the other hand, it follows from (\ref{interface-continuity}) and (\ref{decomposition-interface})
that $u_t(t,0) = 0$. Taking the limits $y \to 0^{\pm}$ results in the dynamical equations (\ref{dynamics-general}) since $W_0'(0) = 1$. 
The two equalities in (\ref{dynamics-general}) are consistent under the interface condition (\ref{interface-general}) since $u_y(t,0^+) = u_y(t,0^-)$.
\end{proof}

\begin{remark}
The system of equations (\ref{Burgers-general}), (\ref{dynamics-general}), and (\ref{interface-general}) is derived under the condition
\begin{equation}
\label{w-positivity-broken}
\pm \left[ W_0(y) + u(t,y) \right] > 0, \quad \pm y > 0
\end{equation}
which follows from (\ref{w-positivity-general}) and (\ref{decomposition-interface}). Since $W_0(0) = 0$, 
$W_0'(0) = 1$, and $u(t,0) = 0$, 
the positivity conditions (\ref{w-positivity-broken}) imply that $1 + u_y(t,0) > 0$,
hence the interface dynamics is well defined by 
the evolution equation (\ref{dynamics-general}) under 
the positivity conditions (\ref{w-positivity-broken}).
\end{remark}

Let us define 
\begin{equation}
\label{variables-plus-minus}
u^+(t,y) := u(t,y), \qquad u^-(t,y) := u(t,-y), \qquad y > 0.
\end{equation}
We also define $\gamma(t) := \xi'(t)$ and use $W_0'(y) = e^{-|y|}$.
The boundary-value problem (\ref{Burgers-general})
can be rewritten in the equivalent form
\begin{equation}
\label{u-eqs}
\left\{ \begin{array}{l}
u^{\pm}_t = (1 \pm \gamma) u^{\pm}_y + u^{\pm}_{yy} + \gamma e^{-y}, \quad y > 0, \\
u^{\pm}(t,0) = 0, \\
u^{\pm}(t,y) \to 0 \quad \mbox{\rm as} \;\; y \to \infty, \end{array} \right.
\end{equation}
subject to the continuity condition
\begin{equation}
\label{u-continuity}
u^+_y(t,0^+) = - u^-_y(t,0^+),
\end{equation}
the interface condition
\begin{equation}
\label{u-redundant-cond}
u^+_{yy}(t,0^+) - u^-_{yy}(t,0^+) = -2 u^+_y(t,0^+),
\end{equation}
and the dynamical condition
\begin{equation}
\label{u-xi-dot}
\gamma(t) = -\frac{u^+_y(t,0^+) + u_{yy}^+(t,0^+)}{1 + u_y^+(t,0^+)} = -\frac{u^-_y(t,0^+) + u^-_{yy}(t,0^+)}{1 - u^-_y(t,0^+)}.
\end{equation}

The proof of Theorem \ref{theorem-2} is divided into two steps. 

In the first step, for a given $\gamma \in L^1(\mathbb{R}_+) \cap L^{\infty}(\mathbb{R}_+)$, 
we show that the boundary-value problems  (\ref{u-eqs}) equipped with the initial 
conditions $u^{\pm}(0,x) = u_0^{\pm}(x)$ can be uniquely solved 
provided the norms of $\gamma \in L^1(\mathbb{R}_+) \cap L^{\infty}(\mathbb{R}_+)$ and $u_0^{\pm} \in H^2(\mathbb{R}_+) \cap W^{2,\infty}(\mathbb{R}_+)$ are small. 
If $\gamma \in C(\mathbb{R}_+)$, the unique global solutions $u^{\pm} \in C(\mathbb{R}_+,H^2(\mathbb{R}_+) \cap W^{2,\infty}(\mathbb{R}_+))$ satisfy the dynamical conditions 
(\ref{u-xi-dot}) for any $t > 0$.

The two solutions for $u^+$ and $u^-$ are uncoupled if 
$\gamma$ is given. However, if the solutions $u^+$ and $u^-$ are required to satisfy the continuity condition
(\ref{u-continuity}), then this constraint yields an integral 
equation on $\gamma \in  L^1(\mathbb{R}_+) \cap L^{\infty}(\mathbb{R}_+)$. In the second step, we prove that the integral equation for $\gamma \in  L^1(\mathbb{R}_+) \cap L^{\infty}(\mathbb{R}_+)$ can be uniquely solved provided $u_0^{\pm} \in H^2(\mathbb{R}_+) \cap W^{2,\infty}(\mathbb{R}_+)$ are small and satisfy an additional requirement of the exponential decay in space.

Finally, 
the two conditions (\ref{u-continuity}) and (\ref{u-xi-dot})  imply the interface condition (\ref{u-redundant-cond}), which is thus redundant in the boundary-value problem.

The following lemma gives a priori energy estimates for the boundary-value problems (\ref{u-eqs}) completed with the continuity condition (\ref{u-continuity}). These energy estimates imply monotonicity of the $H^1$-norm of a smooth solution in time $t$.

\begin{lemma}
\label{lemma-gen-1}
Assume existence of the solutions $u^{\pm} \in C(\mathbb{R}_+,H^2(\mathbb{R}_+))$ to the boundary-value problem (\ref{u-eqs}) completed with the continuity condition (\ref{u-continuity}) 
for the initial conditions $u^{\pm}(0,x) = u_0^{\pm}(x)$ and for some $\gamma \in C(\mathbb{R}_+)$. Then, for every $t > 0$:
\begin{equation}
\label{v-plus-minus-H-1}
\| u^+(t,\cdot) \|^2_{H^1} + \| u^-(t,\cdot) \|^2_{H^1} \leq \| u_0^+ \|^2_{H^1} + \| u_0^- \|^2_{H^1}.
\end{equation}
\end{lemma}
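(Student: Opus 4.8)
The plan is to run the a priori energy argument of Lemma~\ref{lemma-1} simultaneously for the pair $(u^+,u^-)$, while treating the inhomogeneous source $\gamma e^{-y}$ and the $\gamma$-dependent transport terms with care. I set $E(t) := \|u^+(t,\cdot)\|_{H^1}^2 + \|u^-(t,\cdot)\|_{H^1}^2$ and abbreviate $b := u^+_y(t,0^+)$, so that the continuity condition (\ref{u-continuity}) reads $u^-_y(t,0^+) = -b$. Throughout, all norms are over $\mathbb{R}_+$ and boundary terms at $y=+\infty$ drop out by the decay $u^\pm(t,y)\to 0$.

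First I would multiply each equation in (\ref{u-eqs}) by $u^\pm$ and integrate by parts. Since $u^\pm(t,0)=0$, the transport term integrates to a boundary term that vanishes, the diffusion term gives $-\|u^\pm_y\|_{L^2}^2$, and the source gives $\gamma\int_0^\infty u^\pm e^{-y}\,dy$; summing yields
$$
\tfrac{1}{2}\tfrac{d}{dt}\big(\|u^+\|_{L^2}^2+\|u^-\|_{L^2}^2\big) = -\|u^+_y\|_{L^2}^2-\|u^-_y\|_{L^2}^2 + \gamma\!\int_0^\infty (u^++u^-)\,e^{-y}\,dy.
$$
Next I would differentiate the derivative norms: using $u^\pm(t,0)=0$, hence $u^\pm_t(t,0)=0$, one has $\tfrac{1}{2}\tfrac{d}{dt}\|u^\pm_y\|_{L^2}^2 = -\int_0^\infty u^\pm_{yy}u^\pm_t\,dy$, and substituting (\ref{u-eqs}) produces the transport boundary term $\tfrac{1\pm\gamma}{2}b^2$, the diffusion term $-\|u^\pm_{yy}\|_{L^2}^2$, and the source term $\gamma u^\pm_y(t,0)-\gamma\int_0^\infty u^\pm_y e^{-y}\,dy$ after one integration by parts.

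The crucial observation is that $\int_0^\infty u^\pm_y e^{-y}\,dy = \int_0^\infty u^\pm e^{-y}\,dy$, because the boundary term $[u^\pm e^{-y}]_0^\infty$ vanishes thanks to $u^\pm(t,0)=0$. Adding the $L^2$ and derivative identities, the two $\gamma$-weighted source integrals therefore cancel exactly; simultaneously the continuity condition (\ref{u-continuity}) makes the sign-indefinite transport boundary contributions combine into $\tfrac{1+\gamma}{2}b^2+\tfrac{1-\gamma}{2}b^2=b^2$, while the terms $\gamma u^+_y(t,0)+\gamma u^-_y(t,0)=\gamma b-\gamma b$ vanish. This leaves
$$
\tfrac{1}{2}\tfrac{dE}{dt} = b^2 - \|u^+_y\|_{L^2}^2-\|u^-_y\|_{L^2}^2 - \|u^+_{yy}\|_{L^2}^2-\|u^-_{yy}\|_{L^2}^2.
$$

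To close the estimate I would absorb the single surviving boundary term $b^2=(u^+_y(t,0))^2$ by the trace inequality (\ref{inequality-f}) applied to $f=u^+_y\in H^1(\mathbb{R}_+)$, which gives $b^2\le \|u^+_y\|_{L^2}^2+\|u^+_{yy}\|_{L^2}^2$. The $u^+$-contributions then cancel and $\tfrac{1}{2}\tfrac{dE}{dt}\le -\|u^-_y\|_{L^2}^2-\|u^-_{yy}\|_{L^2}^2\le 0$, so $E(t)\le E(0)$, which is exactly (\ref{v-plus-minus-H-1}). I expect the one delicate point to be the sign-indefinite source $\gamma e^{-y}$, which on its own threatens monotonicity already at the $L^2$ level; the resolution is precisely the exact cancellation of its $L^2$- and derivative-level contributions, together with the cancellation of the $\gamma$-transport boundary terms forced by the continuity condition (\ref{u-continuity}).
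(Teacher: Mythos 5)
Your proof is correct and follows essentially the same route as the paper: the same energy identities obtained by testing with $u^{\pm}$ and $u^{\pm}_{yy}$, the same exact cancellation of the $\gamma\int_0^\infty u^{\pm}e^{-y}dy$ contributions and of the $\gamma u^{\pm}_y(t,0^+)$ terms via (\ref{u-continuity}), and the same absorption of the surviving boundary term by the trace inequality (\ref{inequality-f}). The only (immaterial) difference is that you absorb the full boundary term $b^2$ into the $u^+$ dissipation alone, whereas the paper splits $2b^2=[u^+_y(t,0^+)]^2+[u^-_y(t,0^+)]^2$ and absorbs each piece symmetrically.
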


\begin{proof}
Multiplying $u_t^{\pm} = (1 \pm \gamma) u_y^{\pm} + u_{yy}^{\pm} + \gamma e^{-y}$ by $u^{\pm}$ and $u_{yy}^{\pm}$ and integrating by parts yield
\begin{eqnarray}
\label{first-eq}
\frac{d}{dt} \| u^{\pm}(t,\cdot) \|^2_{L^2} & = & -2 \| u^{\pm}_y(t,\cdot) \|^2_{L^2} + 2 \gamma \int_0^{\infty} u^{\pm}(t,y) e^{-y} dy, \\
\nonumber
\frac{d}{dt} \| u^{\pm}_y(t,\cdot) \|^2_{L^2} & = & (1 \pm \gamma) \left[ u_y^{\pm}(t,0^+) \right]^2 - 2 \| u^{\pm}_{yy}(t,\cdot) \|^2_{L^2} 
+ 2 \gamma u_y^{\pm}(t,0^+) - 2 \gamma \int_0^{\infty} u^{\pm}(t,y) e^{-y} dy.
\end{eqnarray}
Adding all equations and using the continuity condition (\ref{u-continuity}) yield
\begin{eqnarray*}
\frac{d}{dt} \| u^+(t,\cdot) \|^2_{H^1} +  \frac{d}{dt} \| u^-(t,\cdot) \|^2_{H^1} =
\left[ u_y^+(t,0^+) \right]^2 + \left[ u_y^-(t,0^+) \right]^2 - 2 \| u^+_y(t,\cdot) \|^2_{H^1} - 2 \| u^-_y(t,\cdot) \|^2_{H^1}.
\end{eqnarray*}
By using the same inequality (\ref{inequality-f}), we close the estimate and obtain
\begin{eqnarray*}
\frac{d}{dt} \left[ \| u^+(t,\cdot) \|^2_{H^1} +  \| u^-(t,\cdot) \|^2_{H^1} \right] \leq
- \| u^+_y(t,\cdot) \|^2_{H^1} - 2 \| u^-_y(t,\cdot) \|^2_{H^1} \leq 0,
\end{eqnarray*}
from which the inequality (\ref{v-plus-minus-H-1}) follows.
\end{proof}

\begin{remark}
Compared to Lemma \ref{lemma-1}, we are not able to conclude on monotonicity of the $L^2$-norm of the solution. Integrating by parts and using Cauchy--Schwarz inequality in (\ref{first-eq}), we get 
$$
\frac{d}{dt} \| u^{\pm}(t,\cdot) \|_{L^2} \leq 
|\gamma| \| e^{-y} \|_{L^2_y(\mathbb{R}_+)},
$$
which yields the Stritcharz-type estimate
\begin{eqnarray*}
\sup_{t \in \mathbb{R}_+} \| u^{\pm}(t,\cdot) \|_{L^2} \leq
\| u_0^{\pm} \|_{L^2} + \| \gamma \|_{L^1} \| e^{-y} \|_{L^2_y(\mathbb{R}_+)},
\end{eqnarray*}
where we write $\| e^{-y} \|_{L^2_y(\mathbb{R}_+)}$ instead of 
$\| e^{-\cdot} \|_{L^2}$ for better clarity.
\end{remark}

We shall now consider the existence of solutions for the boundary-value problems (\ref{u-eqs}) with $\gamma$ expressed by (\ref{u-xi-dot}). Due to the latter condition, we need to require the second derivative to be bounded and piecewise continuous
in a one-sided neighborhood of $y = 0$. This is achieved by 
using a sharper condition on the initial data $u_0^{\pm} \in H^2(\mathbb{R}_+) \cap W^{2,\infty}(\mathbb{R}_+)$ compared to the requirement 
$u_0 \in H^2(\mathbb{R}_+)$ imposed in Lemma \ref{lemma-2}.

The following lemma provides a convenient
reformulation of the boundary-value problems (\ref{u-eqs}) as systems of integral equations.
In these systems, $u^{\pm}$ and $\gamma$ are not required to 
satisfy the continuity condition (\ref{u-continuity}), 
the interface condition (\ref{u-redundant-cond}), and the dynamical conditions (\ref{u-xi-dot}).

\begin{lemma}
\label{lemma-gen-2}
There exist solutions $u^{\pm} \in C(\mathbb{R}_+,H^2(\mathbb{R}_+) \cap W^{2,\infty}(\mathbb{R}_+))$ to the boundary-value problems (\ref{u-eqs}) with the initial conditions $u^{\pm}(0,x) = u_0^{\pm}(x)$ and the given function $\gamma \in C(\mathbb{R}_+)$ if there exist solutions $u^{\pm} \in C(\mathbb{R}_+,H^2(\mathbb{R}_+) \cap W^{2,\infty}(\mathbb{R}_+))$ to the following integral equations for $(t,y) \in \mathbb{R}_+ \times \mathbb{R}_+$:
\begin{eqnarray*}
u^{\pm}(t,y) & = & \frac{1}{\sqrt{4 \pi t}} \int_0^{\infty} u_0^{\pm}(\eta) \left[
e^{-\frac{(y-\eta + t)^2}{4t}} - e^{-y} e^{-\frac{(y+\eta - t)^2}{4t}} \right] d \eta \\
& \phantom{t} & + \int_0^t \frac{\gamma(\tau) d \tau}{\sqrt{4 \pi (t-\tau)}}
\int_0^{\infty} e^{-\eta} \left[
e^{-\frac{(y-\eta + t - \tau)^2}{4(t - \tau)}} - e^{-y} e^{-\frac{(y+\eta - t + \tau)^2}{4 (t-\tau)}} \right] d\eta \\
& \phantom{t} & \pm  \int_0^t \frac{\gamma(\tau) d \tau}{\sqrt{4 \pi (t-\tau)}}
\int_0^{\infty} u_{\eta}^{\pm}(\tau,\eta) \left[
e^{-\frac{(y-\eta + t - \tau)^2}{4(t - \tau)}} - e^{-y} e^{-\frac{(y+\eta - t + \tau)^2}{4 (t-\tau)}} \right] d\eta.
\end{eqnarray*}
\end{lemma}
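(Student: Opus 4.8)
The plan is to recognize the stated integral equations as the Duhamel (variation-of-constants) representation of the boundary-value problems (\ref{u-eqs}), obtained after the same gauge transformation that was used in Lemma \ref{lemma-2}. First I would write the transport-diffusion operator in (\ref{u-eqs}) as $u^{\pm}_t - u^{\pm}_y - u^{\pm}_{yy} = F^{\pm}$ with the inhomogeneous term
\[
F^{\pm}(t,y) := \gamma(t) e^{-y} \pm \gamma(t)\, u^{\pm}_y(t,y),
\]
so that the constant unit drift stays in the principal operator and only the $\gamma$-dependent pieces are treated as forcing. Applying the transformation $u^{\pm}(t,y) = e^{-y/2 - t/4} v^{\pm}(t,y)$ as in (\ref{u-tilde-u}), a direct computation gives $u^{\pm}_t - u^{\pm}_y - u^{\pm}_{yy} = e^{-y/2 - t/4}\bigl(v^{\pm}_t - v^{\pm}_{yy}\bigr)$, so that $v^{\pm}$ solves the inhomogeneous diffusion equation $v^{\pm}_t = v^{\pm}_{yy} + e^{y/2 + t/4} F^{\pm}$ on the half-line with the Dirichlet condition $v^{\pm}(t,0)=0$ and initial data $v_0^{\pm}(y) = e^{y/2} u_0^{\pm}(y)$.

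Second, I would invoke the established representation (\ref{u-tilde-sol-inhomog}) for the inhomogeneous Dirichlet problem on the half-line, with forcing $f(\tau,\eta) = e^{\eta/2 + \tau/4} F^{\pm}(\tau,\eta)$, and then undo the gauge by multiplying through by $e^{-y/2-t/4}$. The key bookkeeping step is to absorb the gauge factors into the heat kernels by completing the square: exactly as in Lemma \ref{lemma-2}, the weight $e^{-y/2-t/4}e^{\eta/2}$ turns $G(t,y-\eta)$ into $\frac{1}{\sqrt{4\pi t}} e^{-(y-\eta+t)^2/(4t)}$ and turns $-G(t,y+\eta)$ into $-e^{-y}\frac{1}{\sqrt{4\pi t}}e^{-(y+\eta-t)^2/(4t)}$, with the identical identity applied to the Duhamel kernels after replacing $t$ by $t-\tau$. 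Substituting $F^{\pm} = \gamma e^{-\eta} \pm \gamma u^{\pm}_\eta$ then separates the forcing contribution into the second line (from $\gamma e^{-\eta}$) and the third line (from $\pm \gamma u^{\pm}_\eta$) of the claimed integral equations, while the initial-data convolution reproduces the first line verbatim.

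Third, I would check that any solution of the integral equations lying in $C(\mathbb{R}_+, H^2(\mathbb{R}_+) \cap W^{2,\infty}(\mathbb{R}_+))$ genuinely solves (\ref{u-eqs}). The Dirichlet condition $u^{\pm}(t,0)=0$ is automatic, since at $y=0$ each bracketed kernel difference vanishes by the even symmetry of the Gaussian (the method-of-images cancellation); the far-field decay $u^{\pm}(t,y)\to 0$ follows from the Gaussian decay together with the assumed regularity of $u_0^{\pm}$, $\gamma$, and $u^{\pm}_\eta$; and the initial condition is recovered as $t\to 0^+$. Differentiating the representation back into the PDE is immediate for the smooth initial-data term and the $\gamma e^{-\eta}$ term.

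The main obstacle is the third line, in which the forcing contains $u^{\pm}_\eta$ itself, so the relation is a genuine (implicit) integral equation rather than an explicit formula, and one must verify that this Duhamel term is twice differentiable in $y$ and once in $t$ while staying in $H^2 \cap W^{2,\infty}$. The difficulty is the kernel singularity as $\tau \to t^-$: naively placing two $y$-derivatives on the Gaussian produces a nonintegrable $(t-\tau)^{-1}$ factor. I would resolve this by integrating by parts in $\eta$ to transfer one derivative from the kernel onto $u^{\pm}_\eta$ (using $u^{\pm}_\eta \in H^1 \cap W^{1,\infty}$, consistent with $u^{\pm} \in H^2 \cap W^{2,\infty}$), leaving the kernel with a single $y$-derivative whose $L^1$ norm scales like $(t-\tau)^{-1/2}$ and is therefore integrable by Lemma \ref{lem-tech}. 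With this distribution of derivatives the Duhamel term is controlled, the differentiations under the integral are justified, and the representation indeed furnishes a solution of (\ref{u-eqs}) in the stated class.
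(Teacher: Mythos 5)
Your proposal is correct and follows essentially the same route as the paper: the gauge transformation $u^{\pm}=e^{-y/2-t/4}v^{\pm}$ reduces (\ref{u-eqs}) to the inhomogeneous Dirichlet problem (\ref{Burgers-odd-tilde-inhomog}) with forcing $f=\tilde\gamma e^{-y/2}\pm\tilde\gamma e^{-t/4}(v_y^{\pm}-\tfrac12 v^{\pm})$ (which matches your $e^{\eta/2+\tau/4}F^{\pm}$), and the representation (\ref{u-tilde-sol-inhomog}) plus completing the square yields the stated integral equations. Your additional remarks on verifying that integral-equation solutions solve the PDE and on taming the $(t-\tau)^{-1}$ kernel singularity by integrating by parts in $\eta$ are sound, and correspond to the estimates the paper defers to Lemma \ref{lemma-gen-3}.
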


\begin{proof}
Similar to the transformation formula (\ref{u-tilde-u}) in the proof of Lemma \ref{lemma-2},
the system of equations (\ref{u-eqs}) can be simplified by using the transformation formulas:
\begin{equation}
\label{v-tilde-v}
u^{\pm}(t,y) = e^{-\frac{y}{2}-\frac{t}{4}} v^{\pm}(t,y), \quad \gamma(t) = e^{-\frac{t}{4}} \tilde{\gamma}(t).
\end{equation}
The boundary-value problems (\ref{u-eqs}) can be rewritten in the form (\ref{Burgers-odd-tilde-inhomog}) with $v = v^{\pm}$, 
$$
f(t,y) = \tilde{\gamma} e^{-\frac{y}{2}} \pm \tilde{\gamma} e^{-\frac{t}{4}} (v^{\pm}_y - \frac{1}{2} v^{\pm}),
$$
and $v_0(y) :=  u^{\pm}_0(y) e^{\frac{y}{2}}$, where $u_0^{\pm}(y) := u^{\pm}(0,y)$ are the initial conditions. By using the exact solution (\ref{u-tilde-sol-inhomog}), we obtain the integral equations for 
$v^{\pm}$:
\begin{eqnarray*}
v^{\pm}(t,y) & = & \frac{1}{\sqrt{4 \pi t}} \int_0^{\infty} v_0^{\pm}(\eta)
\left[ e^{-\frac{(y-\eta)^2}{4t}} - e^{-\frac{(y+\eta)^2}{4t}} \right] d \eta \\
& \phantom{t} & +  \int_0^t \frac{\tilde{\gamma}(\tau) d \tau}{\sqrt{4 \pi (t-\tau)}} \int_0^{\infty} e^{-\frac{\eta}{2}} \left[
e^{-\frac{(y-\eta)^2}{4(t - \tau)}} - e^{-\frac{(y+\eta)^2}{4 (t-\tau)}} \right] d\eta \\
& \phantom{t} & \pm \int_0^t \frac{\tilde{\gamma}(\tau) e^{-\frac{\tau}{4}} d \tau}{\sqrt{4 \pi (t-\tau)}} \int_0^{\infty}
\left( v^{\pm}_y(\tau,\eta) - \frac{1}{2} v^{\pm}(\tau,\eta)\right)  \left[
e^{-\frac{(y-\eta)^2}{4(t - \tau)}} - e^{-\frac{(y+\eta)^2}{4 (t-\tau)}} \right] d\eta.
\end{eqnarray*}
Substituting the transformation (\ref{v-tilde-v}) yields the integral equations for $u^{\pm}(t,y)$.
\end{proof}

Given solutions $u^{\pm} \in C(\mathbb{R}_+,H^2(\mathbb{R}_+) \cap W^{2,\infty}(\mathbb{R}_+))$ of the integral equations in 
Lemma \ref{lemma-gen-2}, we require them to satisfy the continuity condition (\ref{u-continuity}). This sets up the existence problem for $\gamma \in L^1(\mathbb{R}_+) \cap L^{\infty}(\mathbb{R}_+)$. By computing partial derivatives of $u^{\pm}(t,y)$ in $y$, taking the limit $y \to 0^+$,  substituting $u_y^{\pm}(t,0^+)$ into 
(\ref{u-continuity}), and integrating by parts, we obtain  the following integral equation:
\begin{eqnarray}
\nonumber
&& \int_0^t \frac{\gamma(\tau)}{\sqrt{\pi (t-\tau)}}
	e^{-\frac{t - \tau}{4}}  d \tau - 
	\int_0^t \frac{\gamma(\tau)}{\sqrt{4 \pi (t-\tau)}}
	\int_0^{\infty} e^{-\frac{(\eta + t - \tau)^2}{4(t - \tau)}} d\eta  d \tau \\
\label{eq-gamma}
&&  + \frac{1}{\sqrt{4 \pi t}} \int_0^{\infty} \left[ u_0^{+ \prime}(\eta) + u_0^{- \prime}(\eta) + \frac{1}{2} u_0^+(\eta) + \frac{1}{2} u_0^-(\eta) \right]
	e^{-\frac{(\eta - t)^2}{4t}} d \eta \\
\nonumber
&& + \int_0^t \frac{\gamma(\tau)}{\sqrt{4 \pi (t-\tau)}}
	\int_0^{\infty} \left[  u_y^+ - u_y^- + \frac{1}{2} u^+ - \frac{1}{2} u^- \right](\tau,\eta) \left(
	\frac{\eta - t + \tau}{2(t-\tau)} \right)
	e^{-\frac{(\eta - t + \tau)^2}{4 (t-\tau)}} d\eta  d \tau = 0.
\end{eqnarray}
The following lemma rewrites the integral equation (\ref{eq-gamma}) 
in the equivalent form.

\begin{lemma}
	\label{lemma-gen-4}
	There exists a solution $\gamma \in L^1(\mathbb{R}_+) \cap L^{\infty}(\mathbb{R}_+)$ to the integral equation (\ref{eq-gamma}) 
	if there exists 
	a solution $\gamma \in L^1(\mathbb{R}_+) \cap L^{\infty}(\mathbb{R}_+)$ to the following integral equation 
	\begin{eqnarray*}
\gamma(t) & = & -\frac{1}{\sqrt{4\pi t}} \int_0^{\infty} 
		\left[ u_0^{+ \prime}(\eta) + u_0^{- \prime}(\eta) + \frac{1}{2} u_0^+(\eta) + \frac{1}{2} u_0^-(\eta) \right] 
		\left( \frac{\eta + t}{2 t} \right) e^{-\frac{(\eta-t)^2}{4t}} d \eta	\\
		&& 
		- \frac{1}{2} \gamma(t) \left[ u_y^+(t,0^+) - u_y^-(t,0^+) \right] 
		- \frac{1}{2}
		\int_0^t \frac{\gamma(\tau) e^{-\frac{t -\tau}{4}}}{\sqrt{4 \pi (t-\tau)}}  \left[ u_{y}^+(\tau,0^+) - u_{y}^-(\tau,0^+) \right] d \tau \\ 
		&& - \int_0^t \frac{\gamma(\tau)}{\sqrt{4 \pi (t-\tau)}} \int_0^{\infty} \left[ u_{yy}^+ - u_{yy}^- + \frac{1}{2} u_y^+ - \frac{1}{2} u_y^- \right](\tau,\eta) \left( \frac{\eta + t - \tau}{2 (t-\tau)} \right) e^{-\frac{(\eta - t + \tau)^2}{4(t-\tau)}} d\eta d \tau.	
	\end{eqnarray*}
\end{lemma}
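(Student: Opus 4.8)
The plan is to invert the operator $\mathcal{M}$ defined in (\ref{operator-M}) inside the integral equation (\ref{eq-gamma}), using Lemmas \ref{lem-Abel} and \ref{lem-Abel-integral}, after first recasting (\ref{eq-gamma}) so that its kernels match those operators exactly. The obstruction to a direct identification is that the first two terms of (\ref{eq-gamma}) carry the extra weight $e^{-\frac{t-\tau}{4}}$ relative to $\mathcal{M}$. I would remove it with the substitution $\gamma(t) = e^{-\frac{t}{4}}\tilde{\gamma}(t)$ already used in (\ref{v-tilde-v}): completing the square gives $e^{-\frac{(\eta+t-\tau)^2}{4(t-\tau)}} = e^{-\frac{t-\tau}{4}} e^{-\frac{\eta}{2}} e^{-\frac{\eta^2}{4(t-\tau)}}$, so that multiplying (\ref{eq-gamma}) through by $e^{t/4}$ and setting $\tilde{\gamma}(\tau) = e^{\tau/4}\gamma(\tau)$ turns the first two terms into precisely $\mathcal{M}(\tilde{\gamma})$. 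The same device, now in the forms $e^{-\frac{(\eta-t)^2}{4t}} = e^{-\frac{t}{4}} e^{\frac{\eta}{2}} e^{-\frac{\eta^2}{4t}}$ and $e^{-\frac{(\eta-t+\tau)^2}{4(t-\tau)}} = e^{-\frac{t-\tau}{4}} e^{\frac{\eta}{2}} e^{-\frac{\eta^2}{4(t-\tau)}}$, rewrites the remaining terms with standard heat kernels and leaves an identity of the schematic shape $\mathcal{M}(\tilde{\gamma}) = -\Phi_0 - \Phi_1[\tilde{\gamma}]$, where $\Phi_0$ is the forcing built from the initial data and $\Phi_1$ is linear in $\tilde{\gamma}$ through the fields $u^{\pm}$, which depend on $\gamma$ through Lemma \ref{lemma-gen-2}.

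Next I would invert the two pieces separately. The forcing $\Phi_0$ has exactly the structure of the right-hand side of (\ref{eq-Abel}) with $f(\eta) = \left[ u_0^{+\prime}+u_0^{-\prime}+\tfrac12 u_0^+ + \tfrac12 u_0^-\right](\eta)\,e^{\eta/2}$, so Lemma \ref{lem-Abel} and its formula (\ref{sol-Abel}) produce the first line of the target equation once the weights $e^{\eta/2}e^{-\eta^2/(4t)}$ are reassembled into $e^{-(\eta-t)^2/(4t)}$ and the factor $e^{-t/4}$ is reabsorbed. The term $\Phi_1$ is not yet of the form (\ref{eq-Abel-integral}) because of the $\eta$-dependent weight $\frac{\eta-t+\tau}{2(t-\tau)}$, so I would first integrate by parts in $\eta$, using $\frac{\eta-t+\tau}{2(t-\tau)} e^{-\frac{\eta^2}{4(t-\tau)}} = -\partial_\eta e^{-\frac{\eta^2}{4(t-\tau)}} - \tfrac12 e^{-\frac{\eta^2}{4(t-\tau)}}$ together with $u^{\pm}(t,0)=0$. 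The boundary contribution collapses to $u_y^+(\tau,0^+) - u_y^-(\tau,0^+)$, a local-in-time term of type (\ref{eq-Abel-integral-local}); meanwhile the half of $\Phi_1$ coming from the $-\tfrac12 e^{-\eta^2/(4(t-\tau))}$ part cancels exactly the $\tfrac12$-multiple of the bracket $u_y^+ - u_y^- + \tfrac12(u^+ - u^-)$ produced when $\partial_\eta$ falls on $e^{\eta/2}$, so that only a bulk term in $u_{yy}^{\pm}$ and $u_y^{\pm}$ of type (\ref{eq-Abel-integral}) survives. Applying (\ref{sol-Abel-integral-local}) to the first and (\ref{sol-Abel-integral}) to the second, and finally undoing $\tilde{\gamma} = e^{t/4}\gamma$ (whereupon all the exponential factors recombine to $1$), reproduces line by line the second and third lines of the target equation.

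Since Lemmas \ref{lem-Abel} and \ref{lem-Abel-integral} furnish \emph{unique} inverses, the two integral equations are equivalent, and in particular a solution of the reformulated equation satisfies (\ref{eq-gamma}), which is the stated implication. I expect the main difficulty to be bookkeeping rather than conceptual: keeping track of the three distinct completions of the square (and the Laplace shift $p \mapsto p+\tfrac14$ they encode), and carrying out the integration by parts so that both the boundary term and the cancellation eliminating the $u^{\pm}$ contributions from the surviving bulk integrand come out precisely. One must also verify that every integrand lies in the function class required by the inversion lemmas; this is guaranteed by the regularity $u^{\pm} \in C(\mathbb{R}_+, H^2(\mathbb{R}_+) \cap W^{2,\infty}(\mathbb{R}_+))$ from Lemma \ref{lemma-gen-2} together with $\gamma \in L^1(\mathbb{R}_+) \cap L^{\infty}(\mathbb{R}_+)$, and the $\eta$-integrals converge for each fixed $t>0$ thanks to the Gaussian weight even though $f(\eta)e^{\eta/2}$ grows at infinity.
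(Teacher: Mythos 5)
Your proposal is correct and follows essentially the same route as the paper: the substitution $\gamma = e^{-t/4}\tilde{\gamma}$ (together with $v^{\pm} = e^{y/2+t/4}u^{\pm}$) to expose the operator $\mathcal{M}$, inversion of the initial-data term via Lemma \ref{lem-Abel}, integration by parts in $\eta$ to split the remaining term into a boundary piece handled by (\ref{sol-Abel-integral-local}) and a bulk piece handled by (\ref{sol-Abel-integral}), and then undoing the transformation. The cancellation you identify, which reduces the surviving bulk integrand to $u_{yy}^+ - u_{yy}^- + \tfrac{1}{2}u_y^+ - \tfrac{1}{2}u_y^-$, is exactly what occurs in the paper when the bracket $v_{yy}^{\pm} - \tfrac{1}{2}v_y^{\pm}$ is rewritten in the $u$-variables.
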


\begin{proof}
	By using the same transformation (\ref{v-tilde-v}), we rewrite the integral equation (\ref{eq-gamma}) in the equivalent form:
	\begin{eqnarray}
	\nonumber
&&	\mathcal{M}(\tilde{\gamma}) + \frac{1}{\sqrt{4 \pi t}} \int_0^{\infty} \left[ v_0^{+ \prime}(\eta) +  v_0^{- \prime}(\eta) \right]
		e^{-\frac{\eta^2}{4t}} d \eta \\
&& + \int_0^t \frac{\tilde{\gamma}(\tau) e^{-\frac{\tau}{4}}}{\sqrt{4 \pi (t-\tau)}}
		\int_0^{\infty} \left[ v_y^+ - v_y^- \right](\tau,\eta) \left(
		\frac{\eta - t + \tau}{2(t-\tau)} \right)
		e^{-\frac{\eta^2}{4 (t-\tau)}} d\eta d\tau = 0,
		\label{M-integral-eq}
	\end{eqnarray}	
where the linear operator $\mathcal{M}$ is given by (\ref{operator-M}) 
and $v_0^{\pm}(y) = u_0^{\pm}(y) e^{\frac{y}{2}}$. By
using Lemma \ref{lem-Abel} with 
$$
f(\eta) = v_0^{+ \prime}(\eta) +  v_0^{- \prime}(\eta), 
$$
the linear operator $\mathcal{M}$ can be inverted on the second term of the integral equation (\ref{M-integral-eq}). In order to invert the linear operator $\mathcal{M}$ on the third term of the integral equation (\ref{M-integral-eq}),  we integrate it by parts and obtain
	\begin{eqnarray*}
		&& \int_0^t \frac{\tilde{\gamma}(\tau) e^{-\frac{\tau}{4}}}{\sqrt{4 \pi (t-\tau)}}
		\int_0^{\infty} \left[ v_y^+ - v_y^- \right](\tau,\eta) \left(
		\frac{\eta - t + \tau}{2(t-\tau)} \right)
		e^{-\frac{\eta^2}{4 (t-\tau)}} d\eta d\tau \\
		&& = \int_0^t \frac{\tilde{\gamma}(\tau) e^{-\frac{\tau}{4}}}{\sqrt{4 \pi (t-\tau)}} \left[ v_y^+(\tau,0^+) - v_y^-(\tau,0^+) \right] d\tau \\
		&& + 
		\int_0^t \frac{\tilde{\gamma}(\tau) e^{-\frac{\tau}{4}}}{\sqrt{4 \pi (t-\tau)}}
		\int_0^{\infty} \left[ v_{yy}^+ - v_{yy}^- - \frac{1}{2} v_y^+ + \frac{1}{2} v_y^- \right](\tau,\eta) e^{-\frac{\eta^2}{4 (t-\tau)}} d\eta d\tau.
	\end{eqnarray*}
We are now in position to use Lemma \ref{lem-Abel-integral} with 
$$
g(\tau,\eta) = \tilde{\gamma}(\tau) e^{-\frac{\tau}{4}} \left[ v_{yy}^+(\tau,\eta) - v_{yy}^-(\tau,\eta) - \frac{1}{2} v_y^+(\tau,\eta) + \frac{1}{2} v_y^-(\tau,\eta) \right]
$$
and
$$
h(\tau) = \tilde{\gamma}(\tau) e^{-\frac{\tau}{4}}  \left[ v_y^+(\tau,0^+) - v_y^-(\tau,0^+) \right].
$$
By using Lemmas \ref{lem-Abel} and \ref{lem-Abel-integral} as described above, we obtain the following integral equation:	
\begin{eqnarray*}
	\tilde{\gamma}(t) & = & -\frac{1}{\sqrt{4\pi t}} \int_0^{\infty} 
	\left[ v_0^{+ \prime}(\eta) + v_0^{- \prime}(\eta) \right] 
	\left( \frac{\eta + t}{2 t} \right) e^{-\frac{\eta^2}{4t}} d \eta	\\
	&& 
	- \frac{1}{2} \tilde{\gamma}(t) e^{-\frac{t}{4}} \left[ v_y^+(t,0^+) - v_y^-(t,0^+) \right] 
	- \frac{1}{2}
	\int_0^t \frac{\tilde{\gamma}(\tau) e^{-\frac{\tau}{4}}}{\sqrt{4 \pi (t-\tau)}}  \left[ v_{y}^+(\tau,0^+) - v_{y}^-(\tau,0^+) \right] d \tau \\ 
	&& - \int_0^t \frac{\tilde{\gamma}(\tau) e^{-\frac{\tau}{4}}}{\sqrt{4 \pi (t-\tau)}} \int_0^{\infty} \left[ v_{yy}^+ - v_{yy}^- - \frac{1}{2} v_y^+ + \frac{1}{2} v_y^- \right](\tau,\eta) \left( \frac{\eta + t - \tau}{2 (t-\tau)} \right) e^{-\frac{\eta^2}{4(t-\tau)}} d\eta d \tau.	
\end{eqnarray*}
Substituting the transformation (\ref{v-tilde-v}) yields the integral equation for $\gamma(t)$.
\end{proof}

Next, we solve the integral equations in Lemmas \ref{lemma-gen-2} and \ref{lemma-gen-4}.

The following lemma guarantees existence of the global solutions $u^{\pm} \in C(\mathbb{R}_+,H^2(\mathbb{R}_+) \cap W^{2,\infty}(\mathbb{R}_+))$ 
to the boundary-value problems (\ref{u-eqs}) for small initial data $u_0^{\pm} \in H^2(\mathbb{R}_+) \cap W^{2,\infty}(\mathbb{R}_+)$ and the given function $\gamma \in L^1(\mathbb{R}_+) \cap L^{\infty}(\mathbb{R}_+) \cap C(\mathbb{R}_+)$. The global solutions satisfy the dynamical conditions (\ref{u-xi-dot}) but do not generally satisfy the additional conditions (\ref{u-continuity}) and (\ref{u-redundant-cond}). 

\begin{lemma}
\label{lemma-gen-3}
For every $\epsilon > 0$ (small enough), there is $\delta > 0$ such that
for every $u_0^{\pm} \in H^2(\mathbb{R}_+) \cap W^{2,\infty}(\mathbb{R}_+)$ 
and for every $\gamma \in L^1(\mathbb{R}_+) \cap L^{\infty}(\mathbb{R}_+) \cap C(\mathbb{R}_+)$ satisfying
\begin{equation}
\label{initial-bound-lemma}
\| u_0^+ \|_{H^2 \cap W^{2,\infty}} + \| u_0^- \|_{H^2 \cap W^{2,\infty}} 
+ \| \gamma \|_{L^1\cap L^{\infty}} < \delta,
\end{equation}
there exist the unique solutions $u^{\pm} \in C(\mathbb{R}_+,H^2(\mathbb{R}_+) \cap W^{2,\infty}(\mathbb{R}_+))$
to the integral equations in Lemma \ref{lemma-gen-2}. Moreover, the solutions satisfy
\begin{equation}
\label{stability-gen}
\| u^+(t,\cdot) \|_{H^2 \cap W^{2,\infty}} + \| u^-(t,\cdot) \|_{H^2 \cap W^{2,\infty}} < \epsilon \quad \quad t > 0
\end{equation}
and the dynamical conditions (\ref{u-xi-dot}) for $t > 0$.
\end{lemma}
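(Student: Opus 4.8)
The plan is to solve the integral equations of Lemma~\ref{lemma-gen-2} by a Banach fixed-point argument in the space $X := C(\mathbb{R}_+, H^2(\mathbb{R}_+) \cap W^{2,\infty}(\mathbb{R}_+))$ equipped with the norm $\|u\|_X := \sup_{t \geq 0} \|u(t,\cdot)\|_{H^2 \cap W^{2,\infty}}$. Since $\gamma$ is given, the two integral equations for $u^+$ and $u^-$ decouple, and each is \emph{affine} in the unknown: the right-hand side splits into a free term depending only on $u_0^{\pm}$ (first line), an inhomogeneous forcing term depending only on $\gamma$ (second line), and a linear operator $T^{\pm}$ acting on $u^{\pm}$ through $u_\eta^{\pm}$ and carrying a factor of $\gamma$ (third line). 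I would write the map as $\mathcal{F}^{\pm}(u^{\pm}) = F^{\pm} + T^{\pm} u^{\pm}$, where $F^{\pm}$ collects the first two lines, and seek a fixed point of $\mathcal{F}^{\pm}$ in a small ball $B_\epsilon \subset X$.

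First I would bound the free term. Its $H^2$ estimate is exactly the content of Lemma~\ref{lemma-2} (the bounds (\ref{bound-derivative-zero}), (\ref{bound-derivative-first}), (\ref{bound-derivative-second})), and the corresponding $W^{2,\infty}$ estimate follows from the same explicit formula by applying Young's inequality (\ref{Young}) with $p = \infty$ and $q = 1$, using $\|G(t,\cdot)\|_{L^1} = 1$; together these give a uniform bound $C \|u_0^{\pm}\|_{H^2 \cap W^{2,\infty}}$. For the forcing term I would apply Young's inequality (\ref{Young}) in the spatial variable (convolving the heat kernel and its derivatives against the smooth, integrable profile $e^{-\eta}$) and then Lemma~\ref{lem-tech} in time to control the resulting convolutions $\int_0^t (t-\tau)^{-s} |\gamma(\tau)| d\tau$ of integrable order $s < 1$, yielding a bound $C \|\gamma\|_{L^1 \cap L^{\infty}}$. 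Hence $\|F^{\pm}\|_X \leq C(\|u_0^{\pm}\|_{H^2 \cap W^{2,\infty}} + \|\gamma\|_{L^1 \cap L^{\infty}}) < C\delta$.

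The heart of the matter is the estimate $\|T^{\pm} u^{\pm}\|_X \leq C \|\gamma\|_{L^1 \cap L^{\infty}} \|u^{\pm}\|_X$, which simultaneously yields the self-mapping property and the contraction once $\delta$ is small. The difficulty lies in the $W^{2,\infty}$ (and $H^2$) component: since $T^{\pm} u^{\pm}$ already carries $u_\eta^{\pm}$ under the heat kernel, computing $\partial_y^2$ of it brings two derivatives onto $G$, and the bound (\ref{heat-kernel-derivative-last}), giving $\|\partial_y G(t-\tau, \cdot)\|_{L^{\infty}} \sim (t-\tau)^{-1}$, produces a non-integrable time singularity --- precisely the obstruction met for $\nu_y$ in Lemma~\ref{prop-tech}. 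The remedy is to integrate by parts in $\eta$, using $\partial_y G(\cdot, y-\eta) = -\partial_\eta G(\cdot, y-\eta)$, so as to transfer one derivative from the kernel onto the integrand, replacing $u_\eta^{\pm}$ by $u_{\eta\eta}^{\pm}$ (which lies in $L^2 \cap L^{\infty}$ since $u^{\pm} \in H^2 \cap W^{2,\infty}$) paired with $\partial_y G$, whose $L^1$-norm is only $(t-\tau)^{-1/2}$ by (\ref{heat-kernel-derivative}); the resulting time integral is then of integrable order and controlled by Lemma~\ref{lem-tech}. I expect the genuinely delicate point to be the boundary terms generated by this integration by parts at $\eta = 0$, together with the image contribution $-e^{-y} G(\cdot, y+\eta - t + \tau)$: these must be shown either to cancel or to be absorbed using the built-in boundary condition $u^{\pm}(t,0) = 0$, in the same spirit as the reflection and maximum-principle argument used in Lemma~\ref{prop-tech}.

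Granting the three estimates above, for every small $\epsilon > 0$ I would choose $\delta$ so small that $\mathcal{F}^{\pm}$ maps $B_\epsilon$ into itself and is a contraction, and the Banach fixed-point theorem then gives the unique $u^{\pm} \in B_\epsilon$ satisfying (\ref{stability-gen}). The continuity $u^{\pm} \in C(\mathbb{R}_+, H^2 \cap W^{2,\infty})$ follows from the strong continuity of the heat semigroup and dominated convergence applied to the explicit integral formulas. Finally, to obtain the dynamical conditions (\ref{u-xi-dot}) I would use that the integral equation encodes both the boundary condition $u^{\pm}(t,0) = 0$ and the governing equation in (\ref{u-eqs}): differentiating $u^{\pm}(t,0) = 0$ in $t$ gives $u_t^{\pm}(t,0) = 0$, and evaluating $u_t^{\pm} = (1 \pm \gamma) u_y^{\pm} + u_{yy}^{\pm} + \gamma e^{-y}$ at $y = 0^+$ yields precisely $\gamma(t) = -[u_y^{\pm}(t,0^+) + u_{yy}^{\pm}(t,0^+)]/[1 \pm u_y^{\pm}(t,0^+)]$, where the traces are well defined because $u^{\pm} \in W^{2,\infty}(\mathbb{R}_+)$ and the denominator is positive for $\epsilon$ small.
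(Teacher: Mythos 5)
Your proposal is correct and follows essentially the same route as the paper: the same affine decomposition into a free term, a $\gamma$-forcing term, and a linear operator carrying $u_\eta^{\pm}$, the same fixed-point argument in a small ball of $L^\infty(\mathbb{R}_+,H^2\cap W^{2,\infty})$, and the same integration-by-parts remedy for the non-integrable time singularity in the second derivative, with the boundary contributions handled via the maximum-principle estimate of Lemma \ref{prop-tech}. The only notable divergence is in recovering (\ref{u-xi-dot}): the paper computes the traces $\partial_y u_i^{\pm}(t,0^+)+\partial_y^2 u_i^{\pm}(t,0^+)$ directly for each piece of the decomposition and sums them, which avoids your appeal to time-differentiability of $u^{\pm}(t,0)$; also note that the forcing term's second derivative in $L^\infty$ requires the pointwise bound $|\gamma(t)|$ from Lemma \ref{prop-tech} (not just a time convolution of integrable order), though this is still controlled by $\|\gamma\|_{L^1\cap L^\infty}$ as you claim.
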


\begin{proof}
We rewrite the integral equations in Lemma \ref{lemma-gen-2} 
as the fixed-point equations associated with the following integral operators:
\begin{equation}
\label{fixed-point}
u^{\pm} = A^{\pm}(u^{\pm}) := u^{\pm}_1 + u^{\pm}_2 \pm u^{\pm}_3,
\end{equation} 
where
\begin{eqnarray}
\label{part-1}
u^{\pm}_1(t,y) & = & \frac{1}{\sqrt{4 \pi t}} \int_0^{\infty} u_0^{\pm}(\eta) \left[
e^{-\frac{(y-\eta + t)^2}{4t}} - e^{-y} e^{-\frac{(y+\eta - t)^2}{4t}} \right] d \eta, \\
\label{part-2}
u^{\pm}_2(t,y) & = & \int_0^t \frac{\gamma(\tau) d \tau}{\sqrt{4 \pi (t-\tau)}}
\int_0^{\infty} e^{-\eta} \left[
e^{-\frac{(y-\eta + t - \tau)^2}{4(t - \tau)}} - e^{-y} e^{-\frac{(y+\eta - t + \tau)^2}{4 (t-\tau)}} \right] d\eta, \\
\label{part-3}
u^{\pm}_3(t,y) & = & \int_0^t \frac{\gamma(\tau) d \tau}{\sqrt{4 \pi (t-\tau)}}
\int_0^{\infty} u_{\eta}^{\pm}(\tau,\eta) \left[
e^{-\frac{(y-\eta + t - \tau)^2}{4(t - \tau)}} - e^{-y} e^{-\frac{(y+\eta - t + \tau)^2}{4 (t-\tau)}} \right] d\eta.
\end{eqnarray}
The fixed-point equations (\ref{fixed-point}) are considered 
in a small ball $B_{\epsilon} \subset X$ of radius $\epsilon > 0$ in Banach space
$$
X := L^{\infty}(\mathbb{R}_+,H^2(\mathbb{R}_+) \cap W^{2,\infty}(\mathbb{R}_+)),
$$ 
where $u_0^{\pm} \in H^2(\mathbb{R}_+) \cap W^{2,\infty}(\mathbb{R}_+)$ 
and $\gamma \in L^1(\mathbb{R}) \cap L^{\infty}(\mathbb{R})$ are given and satisfy 
the initial bound (\ref{initial-bound-lemma}). We analyze hereafter each term 
in the definition of $A^{\pm}(u^{\pm})$ in $X$.

The explicit expressions for $u_1^{\pm}$ in (\ref{part-1}) coincide with (\ref{sol-explicit}) after the change of the initial data
$u_0$ to $u_0^{\pm}$. By using the same analysis as in the proof of Lemma \ref{lemma-2}, we obtain the same bounds (\ref{bound-derivative-zero}),
(\ref{bound-derivative-first}), and (\ref{bound-derivative-second}) for $u^{\pm}_1(t,\cdot)$ and their first and second
$y$-derivatives in the $L^2(\mathbb{R}_+)$ norm. 
Similarly, the same bounds can be rederived in the $L^{\infty}(\mathbb{R}_+)$ norm. Combining them together, we deduce that there exists $C > 0$ such that 
\begin{equation}
\label{part-one}
\| u_1^{\pm}(t,\cdot) \|_{H^2 \cap W^{2,\infty}} \leq C \| u_0^{\pm} \|_{H^2 \cap W^{2,\infty}}, \quad t > 0.
\end{equation}
By Lebesgue's dominated convergence theorem, 
$u_1^{\pm}(t,\cdot)$ and their first and second derivatives in $y$ 
are continuous functions of $y$ for every $t > 0$ 
such that taking the limit $y \to 0^+$ yields
\begin{equation}
\label{inter-1}
\partial_y u_1^{\pm}(t,0^+) + \partial^2_y u_1^{\pm}(t,0^+) = 0, \quad t > 0.
\end{equation}

Let us now consider the explicit expressions for $u_2^{\pm}$ in (\ref{part-2}). 
By the generalized Young's inequality (\ref{Young}) with either $p = 1$ and 
$q = r = 2$ or $p = q = 2$ and $r = \infty$, we obtain
\begin{eqnarray*}
\| u_2^{\pm}(t,\cdot) \|_{L^2 \cap L^{\infty}} & \leq & 2 \int_0^t |\gamma(\tau)|
\| e^{-y} \ast G(t-\tau,y +t -\tau) \|_{L^2_y(\mathbb{R}_+) \cap L^{\infty}_y(\mathbb{R}_+)} d \tau \\
& \leq & 2 \int_0^t |\gamma(\tau)|
\| e^{-y} \|_{L^1_y(\mathbb{R}_+) \cap L^2_y(\mathbb{R}_+)} \| G(t-\tau,y +t -\tau) \|_{L^2_y(\mathbb{R})} d \tau \\
& \leq & \frac{2}{(8\pi)^{1/4}} \int_0^t \frac{|\gamma(\tau)|}{(t-\tau)^{1/4}} d \tau,
\end{eqnarray*}
where the second equality in (\ref{heat-kernel}) has been used together with 
$\| e^{-y} \|_{L^1_y(\mathbb{R}_+)} = 1$ and $\| e^{-y} \|_{L^2_y(\mathbb{R}_+)} = \frac{1}{\sqrt{2}} < 1$.
Computing derivatives in $y$ and integrating by parts yield
\begin{eqnarray*}
\partial_y u^{\pm}_2(t,y) = -\int_0^t \frac{\gamma(\tau) d \tau}{\sqrt{4 \pi (t-\tau)}}
\int_0^{\infty} e^{-\eta} e^{-\frac{(y-\eta + t - \tau)^2}{4(t - \tau)}} d\eta
+ \nu(t,y)
\end{eqnarray*}
and
\begin{eqnarray*}
	\partial^2_y u^{\pm}_2(t,y) = \int_0^t \frac{\gamma(\tau) d \tau}{\sqrt{4 \pi (t-\tau)}}
	\int_0^{\infty} e^{-\eta} e^{-\frac{(y-\eta + t - \tau)^2}{4(t - \tau)}} d\eta - \frac{1}{2} \nu(t,y) + \nu_y(t,y),
\end{eqnarray*}
where $\nu(t,y)$ is given by (\ref{def-nu}).
By using estimates in the proof of Lemma \ref{prop-tech}, we obtain
\begin{eqnarray*}
\| \partial_y u_2^{\pm}(t,\cdot) \|_{L^2} \leq \frac{3}{(8\pi)^{1/4}} \int_0^t \frac{|\gamma(\tau)|}{(t-\tau)^{1/4}}d \tau,
\end{eqnarray*}
\begin{eqnarray*}
	\| \partial_y u_2^{\pm}(t,\cdot) \|_{L^{\infty}} \leq 	\frac{3}{\sqrt{4 \pi}} \int_0^t \frac{|\gamma(\tau)|}{(t-\tau)^{1/2}} d\tau,
\end{eqnarray*}
\begin{eqnarray*}
\| \partial^2_y u_2^{\pm}(t,\cdot) \|_{L^2} \leq \frac{2}{(8\pi)^{1/4}} \int_0^t \frac{|\gamma(\tau)|}{(t-\tau)^{1/4}}d \tau + \frac{1}{(8 \pi)^{1/4}} \int_0^t \frac{|\gamma(\tau)|}{(t - \tau)^{3/4}} d \tau,
\end{eqnarray*}
and
\begin{eqnarray*}
	\| \partial^2_y u_2^{\pm}(t,\cdot) \|_{L^{\infty}} \leq \frac{3}{\sqrt{4 \pi}} \int_0^t \frac{|\gamma(\tau)|}{(t-\tau)^{1/2}} d\tau + |\gamma(t)|.
\end{eqnarray*}
Combining all estimates together, we deduce that there exists $C > 0$ such that 
\begin{equation}
\label{part-two}
\| u_2^{\pm}(t,\cdot) \|_{H^2 \cap W^{2,\infty}} \leq C \left( \int_0^t \frac{|\gamma(\tau)|}{(t-\tau)^{1/4}}d \tau + |\gamma(t)| \right), 
\quad t > 0,
\end{equation}
where the end point estimates are taken into account. 
Moreover, $u_2^{\pm}(t,\cdot)$ and their first and second derivatives in $y$ 
are continuous functions of $y$ for every $t > 0$.
By taking the limit $y \to 0^+$ and using (\ref{bc-nu}) in Lemma \ref{prop-tech}, we obtain 
\begin{equation}
\label{inter-2}
\partial_y u_2^{\pm}(t,0^+) + \partial^2_y u_2^{\pm}(t,0^+) = 
\frac{1}{2} \nu(t,0) + \nu_y(t,0^+) = - \gamma(t), \quad t > 0.
\end{equation}

We turn now to the explicit expressions for $u_3^{\pm}$ in (\ref{part-3}). 
Integrating by parts, we obtain 
\begin{eqnarray*}
u^{\pm}_3(t,y) &=& -\int_0^t \frac{\gamma(\tau) d \tau}{\sqrt{4 \pi (t-\tau)}}
\int_0^{\infty} u^{\pm}(\tau,\eta) \left(\frac{y-\eta + t - \tau}{2(t-\tau)}\right) 
e^{-\frac{(y-\eta + t - \tau)^2}{4(t - \tau)}} d\eta \\
&& -  \int_0^t \frac{\gamma(\tau) d \tau}{\sqrt{4 \pi (t-\tau)}}
\int_0^{\infty} u^{\pm}(\tau,\eta) \left(
\frac{y+\eta - t + \tau}{2(t-\tau)} \right)
e^{-y} e^{-\frac{(y+\eta - t + \tau)^2}{4 (t-\tau)}} d\eta.
\end{eqnarray*}
By the generalized Young's inequality (\ref{Young}) with $p = 1$ and 
either $q = r = 2$ or $q = r = \infty$, we obtain
\begin{eqnarray*}
	\| u_3^{\pm}(t,\cdot) \|_{L^2 \cap L^{\infty}} & \leq & 2 \int_0^t |\gamma(\tau)|
	\| u^{\pm}(\tau,y) \ast \partial_y G(t-\tau,y +t -\tau) \|_{L^2_y(\mathbb{R}_+)\cap L^{\infty}_y(\mathbb{R}_+)} d \tau \\
	& \leq & 2 \int_0^t |\gamma(\tau)|
	\|  u^{\pm}(\tau,\cdot) \|_{L^2 \cap L^{\infty}} \| \partial_y G(t-\tau,y +t -\tau) \|_{L^1_y(\mathbb{R})} d \tau \\
	& \leq & 2 \int_0^t \frac{|\gamma(\tau)|}{\sqrt{\pi (t-\tau)}} \|  u^{\pm}(\tau,\cdot) \|_{L^2 \cap L^{\infty}} d \tau,
\end{eqnarray*}
where the first equality in (\ref{heat-kernel-derivative}) has been used. 
Computing derivative in $y$ and integrating by parts yield
\begin{eqnarray*}
	\partial_y u^{\pm}_3(t,y) &=& -\int_0^t \frac{\gamma(\tau) d \tau}{\sqrt{4 \pi (t-\tau)}}
	\int_0^{\infty} u_y^{\pm}(\tau,\eta) \left(\frac{y-\eta + t - \tau}{2(t-\tau)}\right) 
	e^{-\frac{(y-\eta + t - \tau)^2}{4(t - \tau)}} d\eta \\
	&& +  \int_0^t \frac{\gamma(\tau) d \tau}{\sqrt{4 \pi (t-\tau)}}
	\int_0^{\infty} u_y^{\pm}(\tau,\eta) \left(
	\frac{y+\eta - t + \tau}{2(t-\tau)} \right)
	e^{-y} e^{-\frac{(y+\eta - t + \tau)^2}{4 (t-\tau)}} d\eta \\
	&& + \int_0^t \frac{\gamma(\tau) d \tau}{\sqrt{4 \pi (t-\tau)}}
\int_0^{\infty} u^{\pm}(\tau,\eta) \left(
\frac{y+\eta - t + \tau}{2(t-\tau)} \right)
e^{-y} e^{-\frac{(y+\eta - t + \tau)^2}{4 (t-\tau)}} d\eta	
\end{eqnarray*}
With similar estimates as above, we obtain 
\begin{eqnarray*}
	\| \partial_y u_3^{\pm}(t,\cdot) \|_{L^2 \cap L^{\infty}} \leq \int_0^t \frac{|\gamma(\tau)|}{\sqrt{\pi (t-\tau)}} (2 \|  \partial_y u^{\pm}(\tau,\cdot) \|_{L^2 \cap L^{\infty}} + \|  u^{\pm}(\tau,\cdot) \|_{L^2 \cap L^{\infty}}) d \tau.
\end{eqnarray*}
Computing another derivative in $y$ and integrating by parts yield
\begin{eqnarray*}
	\partial^2_y u^{\pm}_3(t,y) &=& -\int_0^t \frac{\gamma(\tau) d \tau}{\sqrt{4 \pi (t-\tau)}}
	\int_0^{\infty} u_{yy}^{\pm}(\tau,\eta) \left(\frac{y-\eta + t - \tau}{2(t-\tau)}\right) 
	e^{-\frac{(y-\eta + t - \tau)^2}{4(t - \tau)}} d\eta \\
	&& -  \int_0^t \frac{\gamma(\tau) d \tau}{\sqrt{4 \pi (t-\tau)}}
	\int_0^{\infty} u_{yy}^{\pm}(\tau,\eta) \left(
	\frac{y+\eta - t + \tau}{2(t-\tau)} \right)
	e^{-y} e^{-\frac{(y+\eta - t + \tau)^2}{4 (t-\tau)}} d\eta \\
	&& - 2 \int_0^t \frac{\gamma(\tau) d \tau}{\sqrt{4 \pi (t-\tau)}}
	\int_0^{\infty} u_y^{\pm}(\tau,\eta) \left(
	\frac{y+\eta - t + \tau}{2(t-\tau)} \right)
	e^{-y} e^{-\frac{(y+\eta - t + \tau)^2}{4 (t-\tau)}} d\eta \\
		&& - \int_0^t \frac{\gamma(\tau) d \tau}{\sqrt{4 \pi (t-\tau)}}
	\int_0^{\infty} u^{\pm}(\tau,\eta) \left(
	\frac{y+\eta - t + \tau}{2(t-\tau)} \right)
	e^{-y} e^{-\frac{(y+\eta - t + \tau)^2}{4 (t-\tau)}} d\eta	\\
&& - \int_0^t \frac{\gamma(\tau)}{\sqrt{4 \pi (t-\tau)}}
\partial_y u^{\pm}(\tau,0^+) \left( \frac{y}{t-\tau} \right)
e^{-\frac{(y + t - \tau)^2}{4 (t-\tau)}} d \tau,
\end{eqnarray*}
where the last term can be written as $\tilde{\nu}_y(t,y) + \frac{1}{2} \tilde{\nu}(t,y)$ with 
$$
\tilde{\nu}(t,y) := 2 \int_0^t \frac{\gamma(\tau) u^{\pm}(\tau,0^+)}{\sqrt{4 \pi (t-\tau)}} 
e^{-\frac{(y + t - \tau)^2}{4 (t-\tau)}} d \tau.
$$
All terms in $\partial^2_y u_3^{\pm}$ including the last one are estimated similarly to what was done above. As a result, we obtain
\begin{eqnarray*}
\begin{array}{rcl}
\| \partial^2_y u_3^{\pm}(t,\cdot) \|_{L^2 \cap L^{\infty}}
& \leq & 2  \int_0^t \frac{|\gamma(\tau)| d\tau}{\sqrt{\pi (t-\tau)}} 
(\|  \partial^2_y u^{\pm}(\tau,\cdot) \|_{L^2 \cap L^{\infty}} + 
	\|  \partial_y u^{\pm}(\tau,\cdot) \|_{L^2 \cap L^{\infty}}) d\tau \\
&& + \int_0^t \frac{|\gamma(\tau)| d\tau}{\sqrt{\pi (t-\tau)}}  \|  u^{\pm}(\tau,\cdot) \|_{L^2 \cap L^{\infty}} d\tau 
+ \| \tilde{\nu}_y(t,\cdot) + \frac{1}{2} \tilde{\nu}(t,\cdot) \|_{L^2 \cap L^{\infty}},
\end{array}
\end{eqnarray*}
where the following estimates from the proof of Lemma \ref{prop-tech} can be used:
$$
\| \tilde{\nu}_y(t,\cdot) + \frac{1}{2} \tilde{\nu}(t,\cdot)  \|_{L^2} \leq \frac{1}{(8\pi)^{1/4}} 
\int_0^t \frac{|\gamma(\tau)| | \partial_y u^{\pm}(\tau,0^+)|}{(t-\tau)^{1/4}}
d\tau + \frac{1}{(8\pi)^{1/4}} 
\int_0^t \frac{|\gamma(\tau)| | \partial_y u^{\pm}(\tau,0^+)|}{(t-\tau)^{3/4}}
d\tau 
$$
and
$$
\| \tilde{\nu}_y(t,\cdot) + \frac{1}{2} \tilde{\nu}(t,\cdot)  \|_{L^{\infty}} \leq |\gamma(\tau)| | \partial_y u^{\pm}(\tau,0^+)|.
$$
Combining all estimates together, we deduce that there exists $C > 0$ such that 
\begin{eqnarray}
\nonumber
\| u_3^{\pm}(t,\cdot) \|_{H^2\cap W^{2,\infty}} & \leq & C \left( \int_0^t \frac{|\gamma(\tau)|}{(t-\tau)^{1/2}} 
\| u^{\pm}(\tau,\cdot) \|_{H^2 \cap W^{2,\infty}}
d\tau +
\int_0^t \frac{|\gamma(\tau)| | \partial_y u^{\pm}(\tau,0^+)|}{(t-\tau)^{1/4}}
d\tau \right. \\
&& \qquad \qquad \left. +  |\gamma(\tau)| | \partial_y u^{\pm}(\tau,0^+)| \right), \quad t > 0,
\label{part-three}
\end{eqnarray} 
where the end point estimates are taken into the account.
Moreover, $u_3^{\pm}(t,\cdot)$ and their first and second derivatives in $y$ 
are continuous functions of $y$ for every $t > 0$.
By taking the limit $y \to 0^+$, we obtain 
\begin{equation}
\label{inter-3}
\partial_y u_3^{\pm}(t,0^+) + \partial^2_y u_3^{\pm}(t,0^+) = 
- \gamma(t) \partial_y u^{\pm}(t,0^+), \quad t > 0.
\end{equation}
Summing (\ref{inter-1}), (\ref{inter-2}), and (\ref{inter-3}) recovers the dynamical conditions (\ref{u-xi-dot}) for $u^{\pm}$.

Next, we run the fixed-point arguments for the fixed-point equations 
(\ref{fixed-point}) in $B_{\epsilon} \subset X$. 
If $u^{\pm}_0$ and $\gamma$ satisfy the initial bound (\ref{initial-bound-lemma}), then there exists $C > 0$ such that 
$$
\| A^{\pm}(0) \|_X \leq C \delta
$$ 
due to bounds  (\ref{part-one}) and (\ref{part-two}), where we have also used
the bound (\ref{bound-gamma}) in Lemma \ref{lem-tech}. Furthermore, 
for every small $\epsilon > 0$, there is sufficiently small $\delta > 0$ 
such that if $u^{\pm} \in B_{\epsilon} \subset X$, then $A^{\pm}(u^{\pm}) \in B_{\epsilon} \subset X$; 
moreover $A^{\pm}$ are contractions on $B_{\epsilon} \subset X$ due to 
bounds (\ref{part-three}), where the bound (\ref{bound-gamma}) can be used again. Existence and uniqueness of the fixed point  $u^{\pm} \in B_{\epsilon} \subset X$
to the fixed-point equations (\ref{fixed-point}) follows by the Banach fixed-point theorem. Hence, the bound (\ref{stability-gen}) is proven. 
By the standard bootstrapping arguments, if $\gamma \in C(\mathbb{R}_+)$, 
we also get 
$$
u^{\pm} \in C(\mathbb{R}_+,H^2(\mathbb{R}_+)\cap W^{2,\infty}(\mathbb{R}_+)).
$$
The proof of the lemma is complete.
\end{proof}

When $u^{\pm} \in C(\mathbb{R}_+,H^2(\mathbb{R}_+) \cap W^{2,\infty}(\mathbb{R}_+))$  are substituted from Lemma \ref{lemma-gen-3} into the integral equation (\ref{eq-gamma}), we are looking for a small 
solution $\gamma \in L^1(\mathbb{R}_+) \cap L^{\infty}(\mathbb{R}_+) \cap C(\mathbb{R}_+)$ in response to small initial data $u_0^{\pm} \in H^2(\mathbb{R}_+) \cap W^{2,\infty}(\mathbb{R}_+)$. However, 
we were not able to close the fixed-point iterations unless we added 
the additional requirement of the spatial exponential decay 
of the initial data $u_0^{\pm}$. 

The following lemma shows that the spatial exponential decay of the initial data $u_0^{\pm}$ is preserved in time.

\begin{lemma}
	\label{lemma-gen-3-prime}
In addition to (\ref{initial-bound-lemma}),
we assume that $u_0^{\pm} \in H^2(\mathbb{R}_+) \cap W^{2,\infty}(\mathbb{R}_+)$ satisfy
	\begin{equation}
	\label{initial-bound-lemma-prime}
	\| e^{\alpha \cdot}  u_0^+ \|_{W^{2,\infty}} + 
	\| e^{\alpha \cdot} u_0^- \|_{W^{2,\infty}} < \delta,
	\end{equation}
	for a fixed $\alpha \in (0,\frac{1}{2}]$. Then, the unique solutions $u^{\pm} \in C(\mathbb{R}_+,H^2(\mathbb{R}_+) \cap W^{2,\infty}(\mathbb{R}_+))$ of Lemma \ref{lemma-gen-3} satisfy 
	\begin{equation}
	\label{stability-gen-prime}
	\| e^{\alpha \cdot}  u^+(t,\cdot) \|_{W^{2,\infty}} + \| e^{\alpha \cdot} u^-(t,\cdot) \|_{W^{2,\infty}} < \epsilon, \quad \quad t > 0
	\end{equation}
	and
	\begin{equation}
	\label{asym-stability-decay-gen}
	\| u^{\pm}(t,\cdot) \|_{W^{2,\infty}} \to 0 \quad \mbox{\rm as} \quad t \to +\infty.
	\end{equation} 
\end{lemma}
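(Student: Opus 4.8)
The plan is to re-run the integral-equation machinery of Lemma \ref{lemma-gen-3}, but now in the exponentially weighted norm $\| e^{\alpha \cdot} \cdot \|_{W^{2,\infty}(\mathbb{R}_+)}$. Since Lemma \ref{lemma-gen-3} already supplies the unique solutions $u^{\pm}$ as the fixed points of (\ref{fixed-point}), I would not re-run the contraction from scratch; instead I would use the fixed-point identity $u^{\pm} = u_1^{\pm} + u_2^{\pm} \pm u_3^{\pm}$ with $u_i^{\pm}$ given by (\ref{part-1})--(\ref{part-3}), multiply by $e^{\alpha y}$, and derive a closed a priori inequality for
$$ N^\pm(t) := \sup_{0 \le s \le t} \| e^{\alpha \cdot} u^\pm(s,\cdot) \|_{W^{2,\infty}}. $$
Differentiating the product $e^{\alpha y} u^{\pm}$ once and twice in $y$ only produces linear combinations of $e^{\alpha y} \partial_y^k u^{\pm}$ with bounded coefficients, so it suffices to control $e^{\alpha y} \partial_y^k u_i^{\pm}$ for $k=0,1,2$ and $i=1,2,3$.

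The key computation is for $u_1^\pm$. Writing $e^{\alpha y} = e^{\alpha(y-\eta)} e^{\alpha \eta}$ inside (\ref{part-1}) exposes the weighted datum $e^{\alpha \eta} u_0^\pm(\eta)$, bounded by (\ref{initial-bound-lemma-prime}). Completing the square, the factor $e^{\alpha z - (z+t)^2/(4t)}$ with $z = y-\eta$ becomes a shifted heat kernel times the scalar $e^{t\alpha(\alpha-1)}$, whose exponent is strictly negative precisely because $\alpha \in (0,\tfrac12]$; this produces genuine exponential decay in $t$. The image contribution carries an extra $e^{-y}$, leaving $e^{(2\alpha-1)y} \le 1$ for $y \ge 0$ together with the same decaying scalar, and the derivatives (\ref{sol-derivative})--(\ref{sol-second-derivative}) are handled identically. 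Hence
$$ \| e^{\alpha \cdot} u_1^\pm(t,\cdot) \|_{W^{2,\infty}} \le C\, e^{t\alpha(\alpha-1)} \| e^{\alpha \cdot} u_0^\pm \|_{W^{2,\infty}}, $$
which is both uniformly small and decaying.

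For $u_2^\pm$ the forcing weight $e^{-\eta}$ dominates $e^{-\alpha\eta}$ since $\alpha<1$; completing the square collapses $e^{\alpha y}$ to $e^{(\alpha-1)y}\le 1$ and reduces the estimate to convolution bounds in $|\gamma|$ exactly as in Lemma \ref{lemma-gen-3}, controlled by (\ref{bound-gamma}) of Lemma \ref{lem-tech}. For the self-referential term $u_3^\pm$ I would again split $e^{\alpha y} = e^{\alpha(y-\eta)}e^{\alpha\eta}$, so that $e^{\alpha\eta} u^\pm(\tau,\eta)$ reproduces the weighted solution while $e^{\alpha(y-\eta)}$ merges into the heat-kernel derivative to give a shifted kernel times $e^{(t-\tau)\alpha(\alpha-1)}\le 1$; the delicate endpoint $L^\infty$ contributions (the boundary terms $|\gamma(\tau)|\,|\partial_y u^\pm(\tau,0^+)|$ and the $\tilde\nu$-type terms from Lemma \ref{prop-tech}) must be re-derived with the weight in place. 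This yields
$$ \| e^{\alpha \cdot} u_3^\pm(t,\cdot) \|_{W^{2,\infty}} \le C \int_0^t \frac{|\gamma(\tau)|}{\sqrt{t-\tau}}\, \| e^{\alpha \cdot} u^\pm(\tau,\cdot) \|_{W^{2,\infty}}\, d\tau + (\text{endpoint terms}) \le C\delta\, N^\pm(t), $$
again by Lemma \ref{lem-tech}. Collecting the three bounds gives $N^\pm(t)\le C\delta + C\delta N^\pm(t)$, so for $\delta$ small enough $N^\pm$ is uniformly bounded, which is (\ref{stability-gen-prime}).

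For the asymptotic decay (\ref{asym-stability-decay-gen}) I would argue by a bootstrap on $\ell^\pm := \limsup_{t\to\infty} \| e^{\alpha \cdot} u^\pm(t,\cdot) \|_{W^{2,\infty}}$, using the elementary inequality $\| u^\pm \|_{W^{2,\infty}} \le C_\alpha \| e^{\alpha \cdot} u^\pm \|_{W^{2,\infty}}$. The $u_1^\pm$ contribution vanishes through $e^{t\alpha(\alpha-1)}$. For $u_2^\pm$ and the $u_3^\pm$ integral I would split the time integral at $t/2$: on $(0,t/2)$ the kernels carry an $e^{-ct}$ factor (from $e^{(t-\tau)\alpha(\alpha-1)}$ for $u_3^\pm$, or from the half-line Gaussian tail for $u_2^\pm$) while $\gamma\in L^1(\mathbb{R}_+)$, so this part tends to zero; on $(t/2,t)$ the time-convolution weight is bounded by (\ref{bound-gamma}) and the integrand carries $\sup_{\tau>t/2}\| e^{\alpha\cdot} u^\pm(\tau,\cdot)\|_{W^{2,\infty}}$, contributing at most $C\delta\,\ell^\pm$. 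Hence $\ell^\pm \le C\delta\,\ell^\pm$, and for small $\delta$ we conclude $\ell^\pm=0$. The main obstacle is exactly this last step: since $\gamma$ is only known to lie in $L^1\cap L^\infty$ and need not decay a priori, the decay of the forcing and self-referential terms cannot be read off directly and must be extracted through the sign condition $\alpha(\alpha-1)<0$, the half-line Gaussian tail estimates, and the self-consistent $\limsup$ argument; re-establishing the endpoint $L^\infty$ bounds for $\partial_y^2 u_3^\pm$ in the weighted norm is the most technical point.
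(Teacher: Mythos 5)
Your treatment of the uniform weighted bound (\ref{stability-gen-prime}) is essentially the paper's: rewrite (\ref{part-1})--(\ref{part-3}) with the weight, absorb $e^{\alpha\eta}$ onto the data, extract the scalar $e^{-\alpha(1-\alpha)t}$ (your $e^{t\alpha(\alpha-1)}$) after completing the square, and note that the image terms carry $e^{-(1-2\alpha)y}\leq 1$ for $\alpha\in(0,\frac12]$, so the convolution estimates of Lemma \ref{lemma-gen-3} go through with $\alpha$-dependent constants. One caveat: the a priori inequality $N^\pm(t)\leq C\delta+C\delta N^\pm(t)$ only yields a bound if you already know $N^\pm(t)<\infty$; the clean way to secure this is to re-run the contraction in the weighted ball (which is what the paper does), or to add a continuation argument in $T$. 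This is minor.

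The decay step is where you genuinely diverge from the paper, and where there is a gap. Your $\limsup$ bootstrap needs each contribution to have $\limsup$ at most $C\delta\,\ell^\pm$ plus something vanishing. That works for $u_1^\pm$ (the factor $e^{-\alpha(1-\alpha)t}$) and for $u_3^\pm$ (the integrand carries $\| e^{\alpha\cdot}u^\pm(\tau,\cdot)\|_{W^{2,\infty}}$, and the endpoint term $|\gamma(t)|\,|\partial_y u^\pm(t,0^+)|\leq \|\gamma\|_{L^\infty}\ell^\pm$). It fails for $u_2^\pm$: by the boundary estimate (\ref{nu-4}) of Lemma \ref{prop-tech} --- which is the only available control of $\|\nu_y(t,\cdot)\|_{L^\infty(\mathbb{R}_+)}$, since the direct convolution bound has a non-integrable time singularity --- the norm $\|\partial_y^2 u_2^\pm(t,\cdot)\|_{L^\infty}$ contains the purely local term $|\gamma(t)|$, cf.\ (\ref{part-two}). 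Since $\gamma$ is only assumed to lie in $L^1(\mathbb{R}_+)\cap L^\infty(\mathbb{R}_+)\cap C(\mathbb{R}_+)$ and no pointwise decay of $\gamma$ is established at this stage (nor later in Lemma \ref{lemma-gen-5}), $\limsup_{t\to\infty}|\gamma(t)|$ need not vanish, and your bootstrap only gives $\ell^\pm\leq C\limsup_{t\to\infty}|\gamma(t)|+C\delta\,\ell^\pm$, which does not close. (The convolution tail $\int_{t/2}^t |\gamma(\tau)|(t-\tau)^{-1/2}d\tau$ does tend to zero by splitting at $t-\epsilon$, so that part of your argument is fine; the obstruction is specifically the local term.) The paper sidesteps this by proving instead that $t\mapsto\|e^{\alpha\cdot}u^\pm(t,\cdot)\|_{W^{2,\infty}}$ is integrable on $\mathbb{R}_+$ --- every term, including $|\gamma(t)|$, is integrable in $t$ because $\gamma\in L^1(\mathbb{R}_+)$, via Young's inequality (\ref{Young-time}) with $p=q=r=1$ --- and then combining integrability with continuity in $t$. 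To keep a pointwise argument you would first have to show $\gamma(t)\to 0$, which is not available.
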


\begin{proof}
	By rearranging the heat kernels, we can rewrite (\ref{part-1}), (\ref{part-2}), and (\ref{part-3}) as
	\begin{eqnarray}
\label{part-1-prime}
e^{\alpha y} u^{\pm}_1(t,y) = \frac{e^{-\alpha(1-\alpha) t}}{\sqrt{4 \pi t}} \int_0^{\infty} e^{\alpha \eta} u_0^{\pm}(\eta) \left[
	e^{-\frac{(y-\eta + (1-2\alpha) t)^2}{4t}} - e^{-(1-2\alpha)y} e^{-\frac{(y+\eta - (1-2\alpha)t)^2}{4t}} \right] d \eta \; , 
		\end{eqnarray}
		\begin{eqnarray}
\nonumber
&& 	e^{\alpha y} u^{\pm}_2(t,y) = \int_0^t \frac{\gamma(\tau) e^{-\alpha(1-\alpha) (t-\tau)} d \tau}{\sqrt{4 \pi (t-\tau)}} \qquad \qquad \qquad  \qquad \qquad  \qquad \qquad \qquad  \qquad \qquad  \qquad \qquad  \\
		\label{part-2-prime}
	&& \qquad \qquad \qquad 
\times	\int_0^{\infty} e^{-(1-\alpha)\eta} \left[
	e^{-\frac{(y-\eta + (1-2\alpha) (t - \tau))^2}{4(t - \tau)}} - e^{-(1-2\alpha)y} e^{-\frac{(y+\eta - (1-2\alpha)(t - \tau))^2}{4 (t-\tau)}} \right] d\eta,	\end{eqnarray}
	\begin{eqnarray}
	\nonumber
&&	e^{\alpha y} u^{\pm}_3(t,y) = \int_0^t \frac{\gamma(\tau) e^{-\alpha(1-\alpha) (t-\tau)} d \tau}{\sqrt{4 \pi (t-\tau)}} \qquad \qquad \qquad  \qquad \qquad  \qquad \qquad \qquad  \qquad \qquad  \qquad \qquad \\
		\label{part-3-prime}
		&& \qquad \qquad \qquad \times	\int_0^{\infty} e^{\alpha \eta} u_{\eta}^{\pm}(\tau,\eta) \left[
	e^{-\frac{(y-\eta + (1-2\alpha)(t - \tau))^2}{4(t - \tau)}} - e^{-(1-2\alpha)y} e^{-\frac{(y+\eta - (1-2\alpha)(t- \tau))^2}{4 (t-\tau)}} \right] d\eta.
	\end{eqnarray}
If $\alpha \in (0,\frac{1}{2}]$, the exponential function $e^{-(1-2\alpha) y}$ is still bounded on $\mathbb{R}_+$, whereas $e^{\alpha y} u_0^{\pm}(y)$  belongs to $W^{2,\infty}(\mathbb{R}_+)$ and satisfies the initial bound (\ref{initial-bound-lemma-prime}). 
All convolution estimates of Lemma \ref{lemma-gen-3} hold true with some $\alpha$-dependent constants and give 
the unique solution in $W^{2,\infty}(\mathbb{R}_+)$ satisfying the bound (\ref{stability-gen-prime}). 

It remains to prove the asymptotic decay (\ref{asym-stability-decay-gen}). 
Since 
\begin{eqnarray*}
	\| u \|_{L^{\infty}(\mathbb{R}_+)} & \leq & \| e^{\alpha \cdot} u \|_{L^{\infty}(\mathbb{R}_+)}, \\
		\| u_y \|_{L^{\infty}(\mathbb{R}_+)} & \leq & \| (e^{\alpha \cdot} u)_y \|_{L^{\infty}(\mathbb{R}_+)} + \alpha \| e^{\alpha \cdot} u \|_{L^{\infty}(\mathbb{R}_+)}, \\
				\| u_{yy} \|_{L^{\infty}(\mathbb{R}_+)} & \leq & \| (e^{\alpha \cdot} u)_{yy} \|_{L^{\infty}(\mathbb{R}_+)} + 2 \alpha \| (e^{\alpha \cdot} u)_y\|_{L^{\infty}(\mathbb{R}_+)} + \alpha^2 \| e^{\alpha \cdot} u \|_{L^{\infty}(\mathbb{R}_+)}, 
	\end{eqnarray*}
it is sufficient to prove the decay to zero for $e^{\alpha y} u^{\pm}(t,y)$ as $t \to +\infty$ in $W^{2,\infty}(\mathbb{R}_+)$. In order to prove the decay in time, we show henceforth that $e^{\alpha y} u^{\pm}(t,y)$ are bounded in $L^1(\mathbb{R}_+,W^{2,\infty}(\mathbb{R}_+))$ since $\| e^{\alpha \cdot} u^{\pm}(t,\cdot) \|_{W^{2,\infty}}$ are continuous functions of $t \in \mathbb{R}_+$.

Thanks to the decaying exponential function $e^{-\alpha(1 - \alpha)t}$ as $t \to +\infty$ in (\ref{part-1-prime}) and the Young's inequality (\ref{Young}) with $p = r = \infty$ and $q = 1$, there exists the $\alpha$-dependent $C_{\alpha} > 0$ such that 
\begin{equation}
\label{decay-part-one}
\int_0^{\infty} \|e^{\alpha \cdot} u_1^{\pm}(t,\cdot) \|_{W^{2,\infty}} dt \leq C_{\alpha} \| e^{\alpha \cdot} u_0^{\pm} \|_{W^{2,\infty}}.
\end{equation}
Similarly, it follows from (\ref{part-2-prime}) that
\begin{equation}
\label{decay-part-two}
\int_0^{\infty} \|e^{\alpha \cdot} u_2^{\pm}(t,\cdot) \|_{W^{2,\infty}} dt  \leq C_{\alpha} \int_0^{\infty} \int_0^t |\gamma(\tau)| e^{-\alpha(1-\alpha) (t-\tau)} d \tau dt \leq \frac{C_{\alpha}}{\alpha (1-\alpha)} \| \gamma \|_{L^1(\mathbb{R}_+)},
\end{equation}
where we have used the Young's inequality (\ref{Young-time}) with $p = q = r = 1$. 

Finally, integrating (\ref{part-3-prime}) by parts yields
{\small
\begin{eqnarray*}
&&	e^{\alpha y} u^{\pm}_3(t,y) = -\int_0^t \frac{\gamma(\tau) e^{-\alpha(1-\alpha) (t-\tau)} d \tau}{\sqrt{4 \pi (t-\tau)}}
	\int_0^{\infty} e^{\alpha \eta} u^{\pm}(\tau,\eta) \left(\frac{y-\eta + 
		(1-2\alpha)(t - \tau)}{2(t-\tau)}\right) 
	e^{-\frac{(y-\eta + t - \tau)^2}{4(t - \tau)}} d\eta \\
	&& -  \int_0^t \frac{\gamma(\tau) e^{-\alpha(1-\alpha) (t-\tau)} d \tau}{\sqrt{4 \pi (t-\tau)}}
	\int_0^{\infty} e^{\alpha \eta} u^{\pm}(\tau,\eta) \left(
	\frac{y+\eta - (1-2\alpha)(t -\tau)}{2(t-\tau)} \right)
	e^{-(1-2\alpha) y} e^{-\frac{(y+\eta - t + \tau)^2}{4 (t-\tau)}} d\eta.
\end{eqnarray*}
}By using the bounds  (\ref{heat-kernel}) and (\ref{heat-kernel-derivative}),
the Young's inequality (\ref{Young}) with $p = r = \infty$ and $q = 1$, and the Young's inequality (\ref{Young-time}) with $p = q = r = 1$,
we obtain
\begin{equation}
\label{decay-part-three}
\int_0^{\infty} \|e^{\alpha \cdot} u_3^{\pm}(t,\cdot) \|_{W^{2,\infty}} dt  \leq C_{\alpha} \| \gamma \|_{L^1(\mathbb{R}_+)} \sup_{t \in \mathbb{R}_+} \| e^{\alpha \cdot} u^{\pm}(t,\cdot) \|_{W^{2,\infty}},
\end{equation}
for every solutions $u^{\pm} \in C(\mathbb{R}_+,H^2(\mathbb{R}_+) \cap W^{2,\infty}(\mathbb{R}_+))$ of Lemma \ref{lemma-gen-3} satisfying (\ref{stability-gen-prime}). Hence, $e^{\alpha y} u^{\pm}(t,y)$ are bounded in $L^1(\mathbb{R}_+,W^{2,\infty}(\mathbb{R}_+))$, 
which implies the asymptotic decay (\ref{asym-stability-decay-gen}) 
since $\| e^{\alpha \cdot} u^{\pm}(t,\cdot) \|_{W^{2,\infty}}$ are continuous functions of $t \in \mathbb{R}_+$.
\end{proof}

\begin{remark}
Due to the exponential decay with $\alpha \in (0,\frac{1}{2}]$, we also have the bound 
$$
\| u \|_{H^2(\mathbb{R}_+)} \leq C_{\alpha} \| e^{\alpha \cdot} u \|_{W^{2,\infty}(\mathbb{R}_+)},
$$
which implies that $\| u^{\pm}(t,\cdot) \|_{H^2} \to 0$ as $t \to \infty$.
This decay in time is impossible if the initial data do not satisfy the 
spatial exponential decay, see Remark \ref{remark-decay-H2}.
\end{remark}

The final lemma gives the existence of the unique solution  
to the integral equation (\ref{eq-gamma}) 
for $\gamma \in L^1(\mathbb{R}_+) \cap L^{\infty}(\mathbb{R}_+) \cap C(\mathbb{R}_+)$, 
where $u^{\pm} \in C(\mathbb{R}_+,H^2(\mathbb{R}_+) \cap W^{2,\infty}(\mathbb{R}_+))$  are substituted from Lemmas \ref{lemma-gen-3} and \ref{lemma-gen-3-prime} into the integral equation (\ref{eq-gamma}) and the initial data $u_0^{\pm} \in H^2(\mathbb{R}_+) \cap W^{2,\infty}(\mathbb{R}_+)$ satisfy the bounds (\ref{initial-bound-lemma}) and (\ref{initial-bound-lemma-prime}).

\begin{lemma}
	\label{lemma-gen-5}
Fix $\alpha \in (0,\frac{1}{2}]$ and consider the integral equation (\ref{eq-gamma}) with 
the unique solutions $u^{\pm} \in C(\mathbb{R}_+,H^2(\mathbb{R}_+) \cap W^{2,\infty}(\mathbb{R}_+))$ defined in Lemmas \ref{lemma-gen-3} and \ref{lemma-gen-3-prime} that depend on (small) $\gamma \in L^1(\mathbb{R}_+) \cap L^{\infty}(\mathbb{R}_+) \cap C(\mathbb{R}_+) $. For every $\tilde{\epsilon} > 0$ (small enough), there is $\tilde{\delta} > 0$  such that
	for every $u_0^{\pm} \in H^2(\mathbb{R}_+ \cap W^{2,\infty}(\mathbb{R}_+))$ satisfying 
	\begin{equation}
	\label{initial-bound-lemma-final}
\| u_0^+ \|_{H^2 \cap W^{2,\infty}} + \| u_0^- \|_{H^2 \cap W^{2,\infty}} 
+ \| e^{\alpha \cdot} u_0^+ \|_{W^{2,\infty}} + \| e^{\alpha \cdot} u_0^- \|_{W^{2,\infty}} \leq \tilde{\delta}
	\end{equation}
and the continuity condition $u_0^{+ \prime}(0^+) + u_0^{-\prime}(0^+) = 0$,
there exists the unique solution $\gamma \in L^1(\mathbb{R}_+) \cap L^{\infty}(\mathbb{R}_+) \cap C(\mathbb{R}_+)$ of the integral equation (\ref{eq-gamma})
	satisfying 
	\begin{equation}
	\label{gamma-final-bound}
		\| \gamma \|_{L^{\infty} \cap L^1} \leq \tilde{\epsilon}.
	\end{equation}
\end{lemma}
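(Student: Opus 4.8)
The plan is to recast the integral equation (\ref{eq-gamma}) into the equivalent fixed-point form for $\gamma$ provided by Lemma \ref{lemma-gen-4}, and to solve it by the contraction mapping principle in the ball $B_{\tilde{\epsilon}} := \{\gamma : \| \gamma \|_{L^1 \cap L^{\infty}} \leq \tilde{\epsilon}\}$ inside $L^1(\mathbb{R}_+) \cap L^{\infty}(\mathbb{R}_+) \cap C(\mathbb{R}_+)$. Writing the right-hand side of the equation in Lemma \ref{lemma-gen-4} as $\mathcal{N}(\gamma) = \gamma_0 + \mathcal{R}(\gamma)$, the term $\gamma_0$ is a source depending only on the initial data $u_0^{\pm}$, while $\mathcal{R}(\gamma)$ collects the three remaining terms, which depend on $\gamma$ both explicitly and implicitly through the solutions $u^{\pm} = u^{\pm}[\gamma]$ furnished by Lemmas \ref{lemma-gen-3} and \ref{lemma-gen-3-prime}. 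Before starting, I note that the hypothesis $u_0^{+ \prime}(0^+) + u_0^{- \prime}(0^+) = 0$ is exactly what legitimizes the inversion of $\mathcal{M}$ through Lemma \ref{lem-Abel}: with $f = v_0^{+ \prime} + v_0^{- \prime}$ and $v_0^{\pm}(y) = u_0^{\pm}(y) e^{y/2}$, one has $v_0^{\pm \prime}(0) = u_0^{\pm \prime}(0)$ since $u_0^{\pm}(0) = 0$, so $f(0) = 0$ and the reformulation in Lemma \ref{lemma-gen-4} applies.

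First I would estimate the source term $\gamma_0$. Completing the square in the Gaussian $e^{-(\eta-t)^2/(4t)}$ against the weight $e^{-\alpha \eta}$ produces the decaying factor $e^{-\alpha(1-\alpha) t}$, and together with the bound (\ref{initial-bound-lemma-prime}) on $\| e^{\alpha \cdot} u_0^{\pm} \|_{W^{2,\infty}}$ this gives $\| \gamma_0 \|_{L^1 \cap L^{\infty}} \leq C \tilde{\delta}$. This is the step where the spatial exponential decay of the initial data is indispensable: it places $\gamma_0$ in $L^1(\mathbb{R}_+)$ rather than merely in $L^{\infty}(\mathbb{R}_+)$, which is ultimately what yields $\xi' \in L^1(\mathbb{R}_+)$.

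Next I would estimate $\mathcal{R}(\gamma)$, treating its three terms separately. The local term $-\tfrac{1}{2} \gamma(t) [u_y^+ - u_y^-](t,0^+)$ is bounded in both norms by $\sup_t |u_y^{\pm}(t,0^+)| \cdot \| \gamma \|_{L^1 \cap L^{\infty}} \leq C \epsilon \| \gamma \|_{L^1 \cap L^{\infty}}$ using (\ref{stability-gen}). The time-convolution term is controlled by Young's inequality (\ref{Young-time}) with the kernel $\frac{e^{-(t-\tau)/4}}{\sqrt{4 \pi (t-\tau)}} \in L^1(\mathbb{R}_+)$, again producing a factor $C \epsilon$. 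The double-integral term is the delicate one: completing the square against $e^{-\alpha \eta}$ produces the factor $e^{-\alpha(1-\alpha)(t-\tau)}$, while the spatial weight $\frac{\eta + t - \tau}{2(t-\tau)}$ integrates against the Gaussian without generating a singularity, so the inner integral is controlled by $C e^{-\alpha(1-\alpha)(t-\tau)} \| e^{\alpha \cdot} u^{\pm}(\tau,\cdot) \|_{W^{2,\infty}}$. Young's inequality (\ref{Young-time}) with the $L^1(\mathbb{R}_+)$ kernel $e^{-\alpha(1-\alpha) \cdot}$ and the $L^1(\mathbb{R}_+, W^{2,\infty})$ control of $e^{\alpha \cdot} u^{\pm}$ from Lemma \ref{lemma-gen-3-prime} then gives $\| \mathcal{R}(\gamma) \|_{L^1 \cap L^{\infty}} \leq C \epsilon \| \gamma \|_{L^1 \cap L^{\infty}}$. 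Choosing $\tilde{\delta}$, hence $\delta$ and $\epsilon$, small enough guarantees $\mathcal{N}(B_{\tilde{\epsilon}}) \subset B_{\tilde{\epsilon}}$.

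Finally, for the contraction estimate I would bound $\mathcal{N}(\gamma_1) - \mathcal{N}(\gamma_2)$ in $L^1 \cap L^{\infty}$. The explicit $\gamma$-dependence is linear and again produces a factor $C \epsilon$, but the implicit dependence through $u^{\pm} = u^{\pm}[\gamma]$ requires the Lipschitz estimates $\| u^{\pm}[\gamma_1] - u^{\pm}[\gamma_2] \|_{C(\mathbb{R}_+, H^2 \cap W^{2,\infty})} \leq C \| \gamma_1 - \gamma_2 \|_{L^1 \cap L^{\infty}}$ and its weighted analogue in $L^1(\mathbb{R}_+, W^{2,\infty})$; both follow by re-running the contraction arguments of Lemmas \ref{lemma-gen-3} and \ref{lemma-gen-3-prime} with $\gamma$ as a parameter, since the operators (\ref{part-2})--(\ref{part-3}) depend affinely and bilinearly on $\gamma$. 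Combining the pieces yields $\| \mathcal{N}(\gamma_1) - \mathcal{N}(\gamma_2) \|_{L^1 \cap L^{\infty}} \leq C \epsilon \| \gamma_1 - \gamma_2 \|_{L^1 \cap L^{\infty}}$ with $C \epsilon < 1$ for $\tilde{\delta}$ small, so the Banach fixed-point theorem gives a unique $\gamma \in B_{\tilde{\epsilon}}$ satisfying (\ref{gamma-final-bound}); continuity $\gamma \in C(\mathbb{R}_+)$ is inherited from the continuity of $u^{\pm}$ and of the time-convolution integrals. The main obstacle is precisely the interlocking $L^1$-in-time control of the coupled quantities: the naive $L^{\infty}$-in-time bounds on $u_{yy}^{\pm}$ generate non-integrable time singularities, as already observed in the proof of Lemma \ref{prop-tech}, and only the exponential weight, through the decaying factors $e^{-\alpha(1-\alpha)(t-\tau)}$, restores integrability and closes both the self-map and the contraction estimates.
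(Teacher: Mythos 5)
Your proposal is correct and follows essentially the same route as the paper: the fixed-point reformulation of Lemma \ref{lemma-gen-4}, term-by-term estimates in $L^1(\mathbb{R}_+)\cap L^{\infty}(\mathbb{R}_+)$ in which the exponential weight generates the decaying factor $e^{-\alpha(1-\alpha)t}$ needed for the $L^1$ bounds on the source and double-integral terms, and the Banach fixed-point theorem in the ball $B_{\tilde{\epsilon}}$. The one place where you are more explicit than the paper is the contraction step, where you correctly observe that the implicit dependence of $u^{\pm}$ on $\gamma$ requires Lipschitz estimates obtained by re-running the arguments of Lemmas \ref{lemma-gen-3} and \ref{lemma-gen-3-prime}; the paper leaves this step implicit.
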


\begin{proof}
We rewrite the integral equation in Lemma \ref{lemma-gen-4} as 
the fixed-point equation associated with the following integral integral operator:
\begin{equation}
\label{fixed-point-gamma}
\gamma = \mathcal{A}(\gamma) := \gamma_1 + \gamma_2 + \gamma_3,
\end{equation} 
where
\begin{eqnarray*}
&& \gamma_1(t) = -\frac{1}{\sqrt{4\pi t}} \int_0^{\infty} 
\left[ u_0^{+ \prime}(\eta) + u_0^{- \prime}(\eta) + \frac{1}{2} u_0^+(\eta) + \frac{1}{2} u_0^-(\eta) \right] 
\left( \frac{\eta + t}{2 t} \right) e^{-\frac{(\eta-t)^2}{4t}} d \eta,	 \\
&& \gamma_2(t) = 
- \frac{1}{2} \gamma(t) \left[ u_y^+(t,0^+) - u_y^-(t,0^+) \right] 
- \frac{1}{2}
\int_0^t \frac{\gamma(\tau) e^{-\frac{t -\tau}{4}}}{\sqrt{4 \pi (t-\tau)}}  \left[ u_{y}^+(\tau,0^+) - u_{y}^-(\tau,0^+) \right] d \tau, \\ 
&& \gamma_3(t) =  - \int_0^t \frac{\gamma(\tau)}{\sqrt{4 \pi (t-\tau)}} \int_0^{\infty} \left[ u_{yy}^+ - u_{yy}^- + \frac{1}{2} u_y^+ - \frac{1}{2} u_y^- \right](\tau,\eta) \left( \frac{\eta + t - \tau}{2 (t-\tau)} \right) e^{-\frac{(\eta - t + \tau)^2}{4(t-\tau)}} d\eta d \tau.
\end{eqnarray*}
The fixed-point equation (\ref{fixed-point-gamma}) is considered in a small ball $B_{\tilde{\epsilon}} \subset L^1(\mathbb{R}_+) \cap L^{\infty}(\mathbb{R}_+)$ 
of radius $\tilde{\epsilon} > 0$, where $u_0^{\pm} \in H^2(\mathbb{R}_+) \cap W^{2,\infty}(\mathbb{R}_+)$ are given and satisfy (\ref{initial-bound-lemma-final}) 
and $u^{\pm} \in C(\mathbb{R}_+,H^2(\mathbb{R}_+) \cap W^{2,\infty}(\mathbb{R}_+))$ are defined in Lemmas \ref{lemma-gen-3} and \ref{lemma-gen-3-prime} such that $\tilde{\delta}$ and $\tilde{\epsilon}$ in (\ref{initial-bound-lemma-final}) and (\ref{gamma-final-bound}) are smaller than $\delta$ in (\ref{initial-bound-lemma}).
We analyze hereafter each term in the definition of $\mathcal{A}(\gamma)$ in $L^1(\mathbb{R}_+) \cap L^{\infty}(\mathbb{R}_+)$.

Since the initial constraint  $u_0^{+ \prime}(0^+) + u_0^{-\prime}(0^+) = 0$  is satisfied and $u_0^{\pm} \in W^{2,\infty}(\mathbb{R}_+)$, 
we can use the equivalent form (\ref{sol-Abel-equivalent}) 
in Lemma \ref{lem-Abel} and rewrite $\gamma_1$ in (\ref{fixed-point-gamma}) in the form:
\begin{equation}
\label{gamma-1-new}
\gamma_1(t) = -\frac{1}{\sqrt{4\pi t}} \int_0^{\infty} 
f(\eta) e^{-\frac{(\eta-t)^2}{4t}} d \eta,
\end{equation}
where 
$$
f(\eta) = u_0^{+ \prime \prime}(\eta) + u_0^{- \prime \prime}(\eta) + 
\frac{3}{2} u_0^{+ \prime}(\eta) + \frac{3}{2} u_0^{- \prime}(\eta) 
+ \frac{1}{2} u_0^+(\eta) + \frac{1}{2} u_0^-(\eta).
$$
It follows from the first identity in (\ref{heat-kernel}) that 
there exists $C > 0$ such that
\begin{equation}
\label{part-one-gamma}
\sup_{t \geq 0} |\gamma_1(t)| \leq C
\left( \| u_0^+ \|_{W^{2,\infty}} + \| u_0^- \|_{W^{2,\infty}} \right).
\end{equation}
However, there is no bound on $\| \gamma_1 \|_{L^1}$ unless we add the exponential weight on the initial conditions $u_0^{\pm}$ and rewrite 
$\gamma_1$ in (\ref{gamma-1-new}) in the form:
\begin{equation}
\label{gamma-1-new-new}
\gamma_1(t) = -\frac{e^{-\alpha(1-\alpha) t}}{\sqrt{4\pi t}} \int_0^{\infty} 
e^{\alpha \eta} f(\eta) e^{-\frac{(\eta - (1-2\alpha) t)^2}{4t}} d \eta.
\end{equation}
Now, thanks to the exponential factor $e^{-\alpha(1-\alpha) t}$ decaying to zero as $t \to +\infty$, we obtain 
\begin{eqnarray*}
\| \gamma_1 \|_{L^1} \leq \frac{1}{\alpha (1-\alpha)} \| e^{\alpha \cdot} f \|_{L^{\infty}},
\end{eqnarray*}
so there exists a positive $\alpha$-dependent constant $C_{\alpha}$ such that 
\begin{equation}
\label{part-one-gamma-new}
\| \gamma_1 \|_{L^1} \leq C_{\alpha}
\left( \| e^{\alpha \cdot} u_0^+ \|_{W^{2,\infty}} + \| e^{\alpha \cdot} u_0^- \|_{W^{2,\infty}} \right).
\end{equation}

For $\gamma_2$ in (\ref{fixed-point-gamma}), we obtain 
\begin{equation}
\label{part-two-gamma}
\| \gamma_2 \|_{L^1 \cap L^{\infty}} \leq \frac{1}{2} 
\left( 1 + \int_0^{\infty} \frac{e^{-\frac{t}{4}}}{\sqrt{4\pi t}} dt \right)
\| \gamma \|_{L^1 \cap L^{\infty}} 
\sup_{t \in \mathbb{R}_+} \left( \| u^+(t,\cdot) \|_{W^{1,\infty}} + \| u^-(t,\cdot) \|_{W^{1,\infty}} \right),
\end{equation}
where the expression in brackets is a finite constant. No exponential weight is needed to estimate $\gamma_2$ in $L^1(\mathbb{R}_+) \cap L^{\infty}(\mathbb{R}_+)$.

For $\gamma_3$ in (\ref{fixed-point-gamma}), we use the Young's inequality (\ref{Young}) 
with $p = q = 2$ and $r = \infty$ and obtain
\begin{eqnarray*}
|\gamma_3(t)| & \leq & C 
\left( \int_0^{t} \frac{|\gamma(\tau)| d \tau}{(t-\tau)^{1/4}} + \int_0^t \frac{|\gamma(\tau)| d \tau}{(t-\tau)^{3/4}} \right) \\
&& \qquad \times \sup_{t \in \mathbb{R}_+} \left( \| u^+(t,\cdot) \|_{H^2} + \| u^-(t,\cdot) \|_{H^2} \right), \quad t > 0,
\end{eqnarray*}
which is bounded due to (\ref{bound-gamma}) if $\gamma \in L^1(\mathbb{R}_+) \cap L^{\infty}(\mathbb{R}_+)$. There is no bound on $\| \gamma_3 \|_{L^1}$, unless we add the exponential weight and  
rewrite $\gamma_3$ in the equivalent form:
\begin{eqnarray*}
\gamma_3(t) =  - \int_0^t \frac{\gamma(\tau) e^{-\alpha (1-\alpha)(t-\tau)}}{\sqrt{4 \pi (t-\tau)}} \int_0^{\infty} e^{\alpha \eta} g(\tau,\eta) \left( \frac{\eta + t - \tau}{2 (t-\tau)} \right) e^{-\frac{(\eta - (1-2\alpha)(t-\tau))^2}{4(t-\tau)}} d\eta d \tau,
\end{eqnarray*}
where 
$$
g(\tau,\eta) = u_{yy}^+(\tau,\eta)  - u_{yy}^-(\tau,\eta)  + \frac{1}{2} u_y^+(\tau,\eta)  - \frac{1}{2} u_y^-(\tau,\eta).
$$
By using the Young's inequality (\ref{Young}) 
with $p = r = \infty$ and $q = 1$ and by using the Young's inequality (\ref{Young-time}) with either $p = r = 1$ or $p = r = \infty$ and $q = 1$, we now obtain 
\begin{equation*}
\| \gamma_3 \|_{L^1 \cap L^{\infty}} \leq 
\left( \frac{1}{\alpha} + \int_0^{\infty} \frac{e^{-\alpha(1-\alpha)t}}{\sqrt{\pi t}} dt \right)
\| \gamma \|_{L^1 \cap L^{\infty}} 
\sup_{t \in \mathbb{R}_+} \| e^{\alpha \cdot} g(t,\cdot) \|_{L^{\infty}},
\end{equation*}
so there exists a positive $\alpha$-dependent constant $C_{\alpha}$ such that 
\begin{equation}
\label{part-three-gamma-new}
\| \gamma_3 \|_{L^1 \cap L^{\infty}} \leq 
C_{\alpha} \| \gamma \|_{L^1 \cap L^{\infty}} 
\sup_{t \in \mathbb{R}_+} \left( \| e^{\alpha \cdot} u^+(t,\cdot) \|_{W^{2,\infty}} + \| e^{\alpha \cdot} u^-(t,\cdot) \|_{W^{2,\infty}} \right).
\end{equation}

Next, we run the fixed-point arguments for the fixed-point equation
(\ref{fixed-point-gamma}) in $B_{\tilde{\epsilon}} \subset L^1(\mathbb{R}_+) \cap L^{\infty}(\mathbb{R}_+)$. If $u^{\pm}_0$ satisfy the initial bound (\ref{initial-bound-lemma-final}) and $\gamma \in B_{\tilde{\epsilon}}$, then 
the solutions $u^{\pm} \in C(\mathbb{R}_+,H^2(\mathbb{R}_+) \cap W^{2,\infty}(\mathbb{R}_+))$ in Lemmas \ref{lemma-gen-3} and \ref{lemma-gen-3-prime} satisfy the bounds (\ref{stability-gen}) and (\ref{stability-gen-prime}) if $\tilde{\delta} \leq \delta$ and $\tilde{\epsilon} \leq \delta$. The bounds 
(\ref{part-one-gamma}), (\ref{part-one-gamma-new}), 
(\ref{part-two-gamma}), and (\ref{part-three-gamma-new}) imply 
that $\mathcal{A}(\gamma) \in B_{\tilde{\epsilon}}$ for sufficiently small $\tilde{\delta}$ and given small $\tilde{\epsilon}$. Moreover, $\mathcal{A}$ is a contraction on $B_{\tilde{\epsilon}} \subset L^1(\mathbb{R}_+) \cap L^{\infty}(\mathbb{R}_+)$ due to 
the same bounds (\ref{part-two-gamma}), and (\ref{part-three-gamma-new}) and 
the smallness of the solutions $u^{\pm} \in C(\mathbb{R}_+,H^2(\mathbb{R}_+) \cap W^{2,\infty}(\mathbb{R}_+))$.

Existence and 
uniqueness of the fixed point  $\gamma \in B_{\tilde{\epsilon}} \subset L^1(\mathbb{R}_+) \cap L^{\infty}(\mathbb{R}_+)$
to the fixed-point equation (\ref{fixed-point-gamma}) follows from the Banach fixed-point theorem. Hence, the bound (\ref{gamma-final-bound}) is proven. 
By the standard bootstrapping arguments, if $u^{\pm} \in C(\mathbb{R}_+,H^2(\mathbb{R}_+\cap W^{2,\infty}(\mathbb{R}_+))$ 
and $e^{\alpha y} u^{\pm} \in C(\mathbb{R}_+,W^{2,\infty}(\mathbb{R}_+))$, 
then $\gamma \in C(\mathbb{R}_+)$. The proof of the lemma is complete.
\end{proof}

\begin{proof1}{\em of Theorem \ref{theorem-2}.}
	The existence, uniqueness, and continuous dependence of the solutions $u^{\pm}$ to the boundary-value problems (\ref{u-eqs}) with (\ref{u-continuity}) and (\ref{u-xi-dot}) is obtained from Lemmas \ref{lemma-gen-3}, \ref{lemma-gen-3-prime}, and \ref{lemma-gen-5}
	as follows. For a fixed $\epsilon$ in (\ref{stability-gen}) and (\ref{stability-gen-prime}), there exists a small $\delta$ in (\ref{initial-bound-lemma}) and (\ref{initial-bound-lemma-prime}), 
	for which we select $\tilde{\epsilon}$ in (\ref{gamma-final-bound}) 
	such that $\tilde{\epsilon} \leq \delta$. By Lemma \ref{lemma-gen-5}, 
	there exists $\tilde{\delta}$ in (\ref{initial-bound-lemma-final}) 
	and, if necessary,  we reduce $\tilde{\delta}$ so that $\tilde{\delta} \leq \delta$. Then, the results of Lemmas \ref{lemma-gen-3}, \ref{lemma-gen-3-prime}, and \ref{lemma-gen-5} hold simultaneously for the initial conditions 	satisfying (\ref{initial-bound-lemma-final}), which is obtained from (\ref{initial-2}) by the transformations 
	(\ref{decomposition-interface}) and (\ref{variables-plus-minus}). The bound (\ref{final-2}) follows from 
	$u^{\pm} \in B_{\epsilon} \subset X$ in the proof of Lemma \ref{lemma-gen-3} and the transformations 
	(\ref{decomposition-interface}) and (\ref{variables-plus-minus}).
	The decay (\ref{scattering-2}) follows from the decay (\ref{asym-stability-decay-gen}).
	By Lemmas \ref{lemma-gen-3} and \ref{lemma-gen-3-prime}, the solutions 
	belong to the spaces (\ref{space-2}) and (\ref{space-2-exp}). 
	
	The interface condition (\ref{u-redundant-cond}) follows 
	from (\ref{u-continuity}) and (\ref{u-xi-dot}).
The interface condition (\ref{dynamics-general}) of Lemma \ref{lem-interface} 
follows from the transformation (\ref{variables-plus-minus}) and 
the dynamical condition (\ref{u-xi-dot}). 
The positivity condition (\ref{w-positivity-broken}) follows from the decomposition (\ref{decomposition-interface}) and smallness of $u$ in $W^{1,\infty}(\mathbb{R})$ similarly to the proof of 
Theorem \ref{theorem-1}. 
\end{proof1}

\section{Numerical simulations}
\label{section-numerics}

Here we simulate numerically the boundary-value problem (\ref{Burgers-general}) completed 
with the dynamical equation  (\ref{dynamics-general}) and the interface condition (\ref{interface-general}). The interface location $\xi(t)$ satisfies $\xi(0) = 0$. We define again $\gamma(t) = \xi'(t)$ and 
use $W_0'(y) = e^{-|y|}$. By using new variables
\begin{equation}
\label{change var}
v^{\pm}(t,y)= u(t,y) \mp u(t,-y), \quad y > 0,
\end{equation}
we can rewrite the boundary-value problem (\ref{Burgers-general}) as a system of two coupled equations:
\begin{equation}\label{v-eqs}
\left\{ \begin{array}{ll}
v^{+}_t = v^{+}_y + v_{yy}^{+} + \gamma v_{y}^{-}, \quad & y > 0, \\
v_{t}^{-} = v^{-}_y +v_{yy}^{-} + \gamma v_{y}^{+} + 2 \gamma e^{-y}, \quad & y > 0,\\
\end{array} \right.
\end{equation}
subject to the boundary conditions
\begin{equation}\label{v-bc}
\begin{cases}
v^{\pm}(t,0) = 0,\\
v_{y}^{-}(t,0) =0, \\
v^{\pm}(t,y)\to 0 \textrm{ as } y\to \infty,
\end{cases}
\end{equation}
the interface condition
\begin{equation}
\label{redundant-cond}
v_y^+(t,0) + v_{yy}^{+}(t,0) = 0,
\end{equation}
and the dynamical condition
\begin{equation}
\label{xi-dot}
\gamma(t) = -\frac{v_{yy}^{-}(t,0)}{2 + v_{y}^{+}(t,0)}.
\end{equation}
If $v^-(0,y) = 0$ initially, then $\gamma(t) = 0$ and $v^-(t,y) = 0$ are preserved in the time evolution of (\ref{v-eqs}), (\ref{v-bc}), and (\ref{xi-dot}). In this case, the variable $v^+(t,y)$ satisfies 
the boundary-value problem (\ref{Burgers-odd}), which is analyzed in Theorem \ref{theorem-1} for the odd perturbations to the viscous shock. In what follows, we consider the general case of $v^-(0,y) \neq 0$ which is analyzed in Theorem \ref{theorem-2}.

The spatial domain of system (\ref{v-eqs}) is discretized at the points $y_n = nh$ with equal step size $h$ for $n = 1, \dots, N$.
It follows from the boundary conditions (\ref{v-bc}) that $v^{\pm}(t,y_0) = 0$ at $y_0 = 0$. 
Although the problem is unbounded in one direction, one can truncate the half-line on the finite interval $[0,L]$ with sufficiently large $L$
and $y_{N+1} = L = (N+1)h$ and apply the Dirichlet condition $v^{\pm}(t,y_{N+1}) = 0$ at the end point.
This approach of truncation is commonly adopted for the numerical approximation of evanescent waves in engineering \cite{Numerics} 
as the Dirichlet condition does not provide large errors due to reflections if the waves have fast spatial decay.

At each time level $t_k = k \tau$ with the time step $\tau$, we approximate the spatial derivatives with the second-order central differences as follows:
\begin{align}
v^{\pm}_y(t_k,y_n)  &=  \frac{v^{\pm}_{n+1, k}- v^{\pm}_{n-1, k}}{2h},\\
v^{\pm}_{yy}(t_k,y_n) &=  \frac{v^{\pm}_{n+1, k}-2v^{\pm}_{n,k}+v^{\pm}_{n-1, k}}{h^2}.
\end{align}
where $v_{n,k}$ is a numerical approximation of $v(t_k,x_n)$.
The Neumann condition $v^{-}_y(t,0)=0$ is modeled with the virtual grid point $y_{-1} = -h$ so that $v^{-}_{-1,k}= v^{-}_{1,k}$. By using the virtual grid point $y_{-1}$ and the interface condition (\ref{redundant-cond}), we also express
\begin{equation}
v^+_{-1,k} = -\frac{2+h}{2-h} v^+_{1,k},
\end{equation}
after which the approximation of $\gamma(t_k)$ is obtained from \eqref{xi-dot} as follows:
\begin{equation}\label{gamma}
\gamma(t_k) = - \frac{(2-h)v^{-}_{1,k}}{h v^{+}_{1,k} + h^2(2-h)}.
\end{equation}

We use the Crank--Nicholson method in order to perform steps in time 
for the evolution system (\ref{v-eqs}). For each equation of the form 
$\frac{dv}{dt} = f(v)$, the Crank--Nicholson method yields:
\begin{align}\label{C-N1}
v_{k+1} -\frac{\tau}{2} f(v_{k+1}) = v_{k} + \frac{\tau}{2} f(v_{k}),  
\end{align}
where $f$ for the first and second equations of system (\ref{v-eqs}) take the form:
\begin{align*}
[f^+]_{n,k} &= \frac{v^{+}_{n+1,k}-v^{+}_{n-1,k}}{2h}+
\frac{v^{+}_{n+1,k}-2v^{+}_{n,k}+v^{+}_{n-1,k}}{h^2}+
\gamma_{k}\frac{v^{-}_{n+1,k}- v^{-}_{n-1,k}}{2h},\\
[f^+]_{n,k} &=   \frac{v^{-}_{n+1,k}-v^{-}_{n-1,k}}{2h}+
\frac{v^{-}_{n+1,k}-2v^{-}_{n,k}+v^{-}_{n-1,k}}{h^2}+
\gamma_{k}\frac{v^{+}_{n+1,k}- v^{+}_{n-1,k}}{2h} + 2\gamma_k e^{-y_n}.
\end{align*}
For simplicity,
we use $\gamma_k$ at the time level $k$ on both sides of equation \eqref{C-N1}.
Thus, in order to advance the solution of \eqref{v-eqs} to the next time level $k+1$, we
have to solve the following algebraic system:
\begin{equation}
L(-\tau) \bold{v}_{k+1} = L(\tau) \bold{v}_{k}+\bold{c}_k
\end{equation}
where $\bold{v}_k$ and $\bold{c}_k$ are the $2N$ vectors with the elements
\begin{equation}
v_{n,k} = v^+_{n,k}, \quad 1 \leq n \leq N, \qquad \mbox{\rm and} \qquad 
v_{n,k} = v^-_{n,k}, \quad N+1 \leq n \leq 2N,
\end{equation}
and
\begin{equation}
c_{n,k} = 0, \quad 1 \leq n \leq N, \qquad \mbox{\rm and} \qquad 
c_{n,k} = 2 \tau \gamma_k e^{-y_n}, \quad N+1 \leq n \leq 2N,
\end{equation}
and $L(\tau)$ is the $(2N\times 2N)$ matrix defined in the block form:
\begin{equation}
L =
\left[
\begin{array}{c|c}
A & B \\
\hline
B & A
\end{array}
\right],
\end{equation}
with $A$ and $B$ are  $(N\times N)$ three-diagonal matrices with the elements:
\begin{align*}
a_{j,j} = 1 - \frac{\tau}{h^2}, \quad
a_{j,j+1} = \frac{\tau}{2} \left( \frac{1}{2h}+\frac{1}{h^2} \right), \quad
a_{j,j-1} =	\frac{\tau}{2} \left( -\frac{1}{2h}+\frac{1}{h^2} \right)
\end{align*}
and
\begin{align*}
b_{j,j} = 0, \quad b_{j,j+1}= \frac{\tau}{4h} \gamma_k, \quad
b_{j,j-1} = -\frac{\tau}{4h} \gamma_k.
\end{align*}
The solution $u(t,y)$ to the boundary-value problem (\ref{Burgers-general}) for $y \in \R$ is recovered from solution $v^{\pm}(t,y)$ to system \eqref{v-eqs} for $y \in \R_+$ by using the transformation (\ref{change var}). 
Finally, we use $y = x - \xi(t)$ 
with $\xi(t) := \int_0^t \gamma(t') dt'$ in order to display $u(t,x)$ versus $x$ on $\R$.

\begin{figure}[htp]
	\centering
	\includegraphics[width=0.48\linewidth]{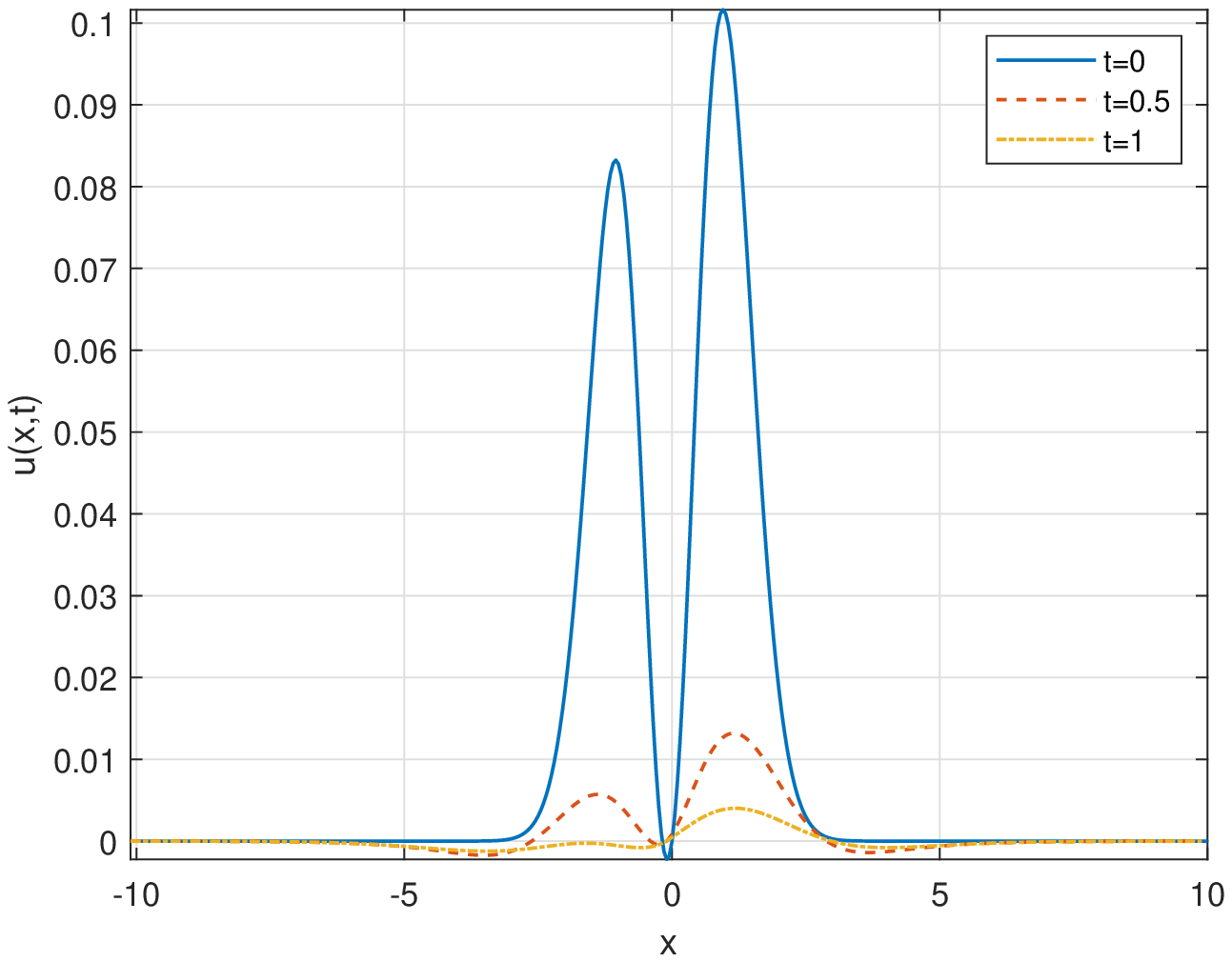}
	\includegraphics[width=0.48\linewidth]{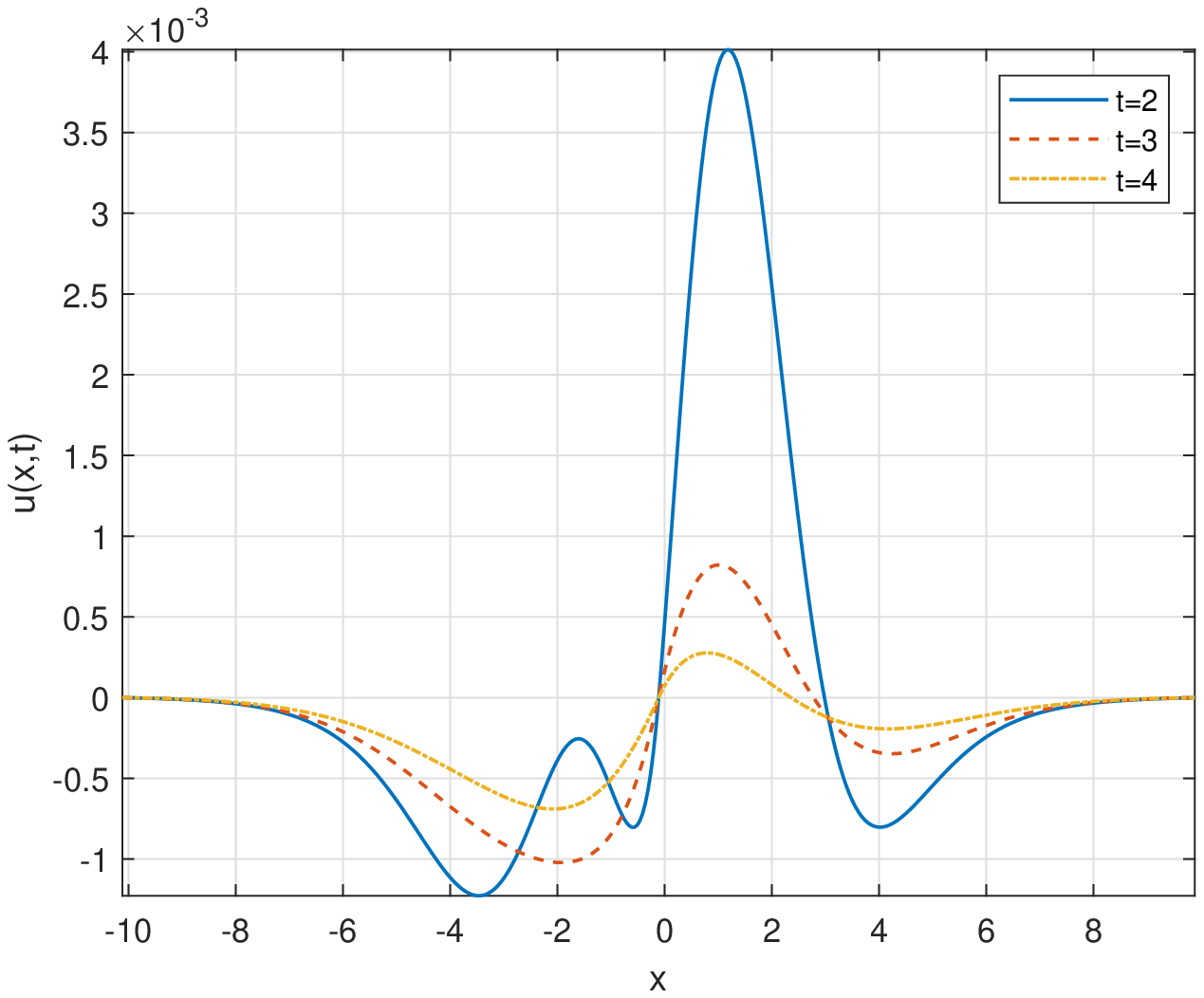} \\
	\includegraphics[width=0.48\linewidth]{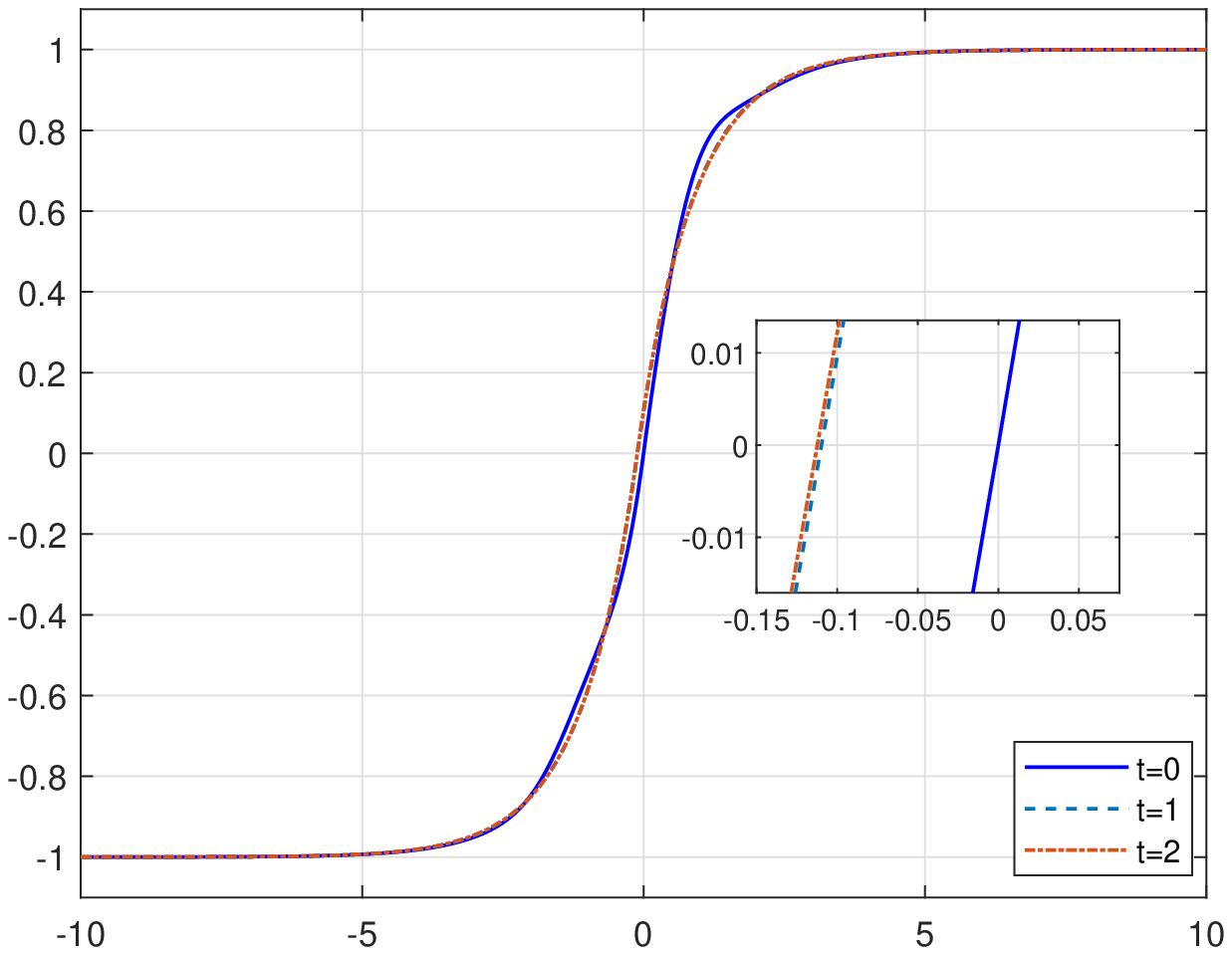}
	\includegraphics[width=0.48\linewidth]{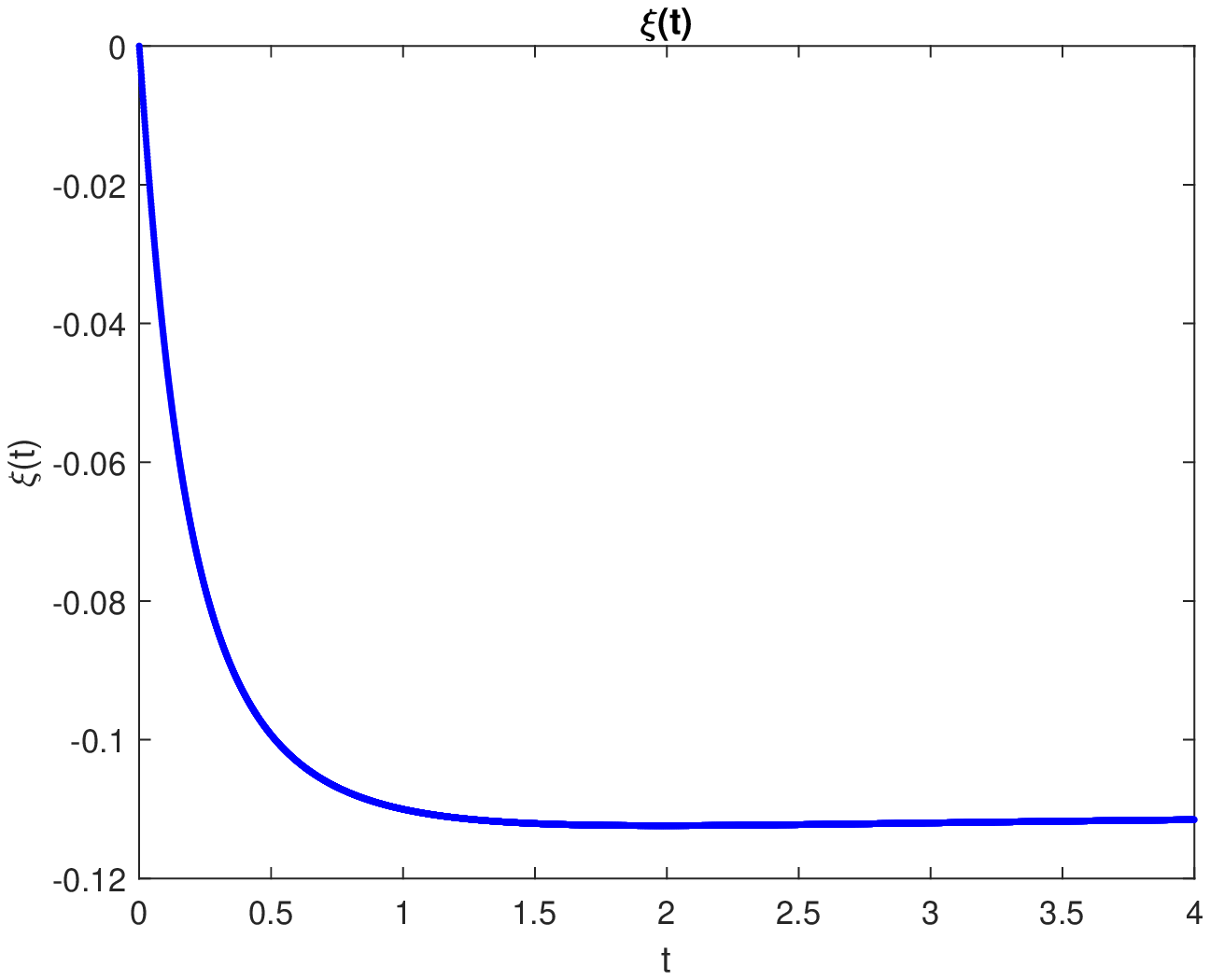}
	\caption{Numerical simulations for the initial conditions \eqref{IC1}.
		Top: plot of $u(t,x)$ versus $x$ for $t = 0, 0.5, 1$ (left) and 
		$t = 2,3,4$ (right). Bottom: plot of $w(t,x)$ versus $x$ for $t = 0,1,2$ (left) and plot of $\gamma(t)$ versus $t$ (right).}
	\label{fig:set1}
\end{figure}

Figure \ref{fig:set1} reports the results of numerical simulations for 
the initial condition with the Gaussian decay:
\begin{align}\label{IC1}
\begin{cases}
v^{+}(0,x)= 0.1(x-0.5x^2)e^{-x^2},\\
v^{-}(0,x)= 0.5x^2e^{-x^2},
\end{cases}
\end{align}
where the coefficients are carefully selected to satisfy 
the boundary conditions in (\ref{v-bc}) and the interface condition 
(\ref{redundant-cond}) at $t = 0$. 

Snapshots of $u(t,x)$ versus $x$ for different values of $t$ (top panels) show that the solution quickly decays to zero in the supremum norm. Although the perturbation $u$ is sign-indefinite, the values of $u$ are smaller compared to the values of $W_0$ in the  viscous shock, hence $w = W_0 + u$ remains positive (negative) to the right (left) of the interface located at $x = \xi(t)$. The snapshots of $w$ are shown on the bottom left panel for $t = 0,1,2$ with the insert showing the profile of $w$ near the interface. 
The bottom right panel shows the position of the interface $\xi$ versus $t$. 
It quickly relaxes to the equilibrium position at $\xi_{\infty} \approx -0.11$.

\begin{figure}[htp]
	\centering
	\includegraphics[width=0.48\linewidth]{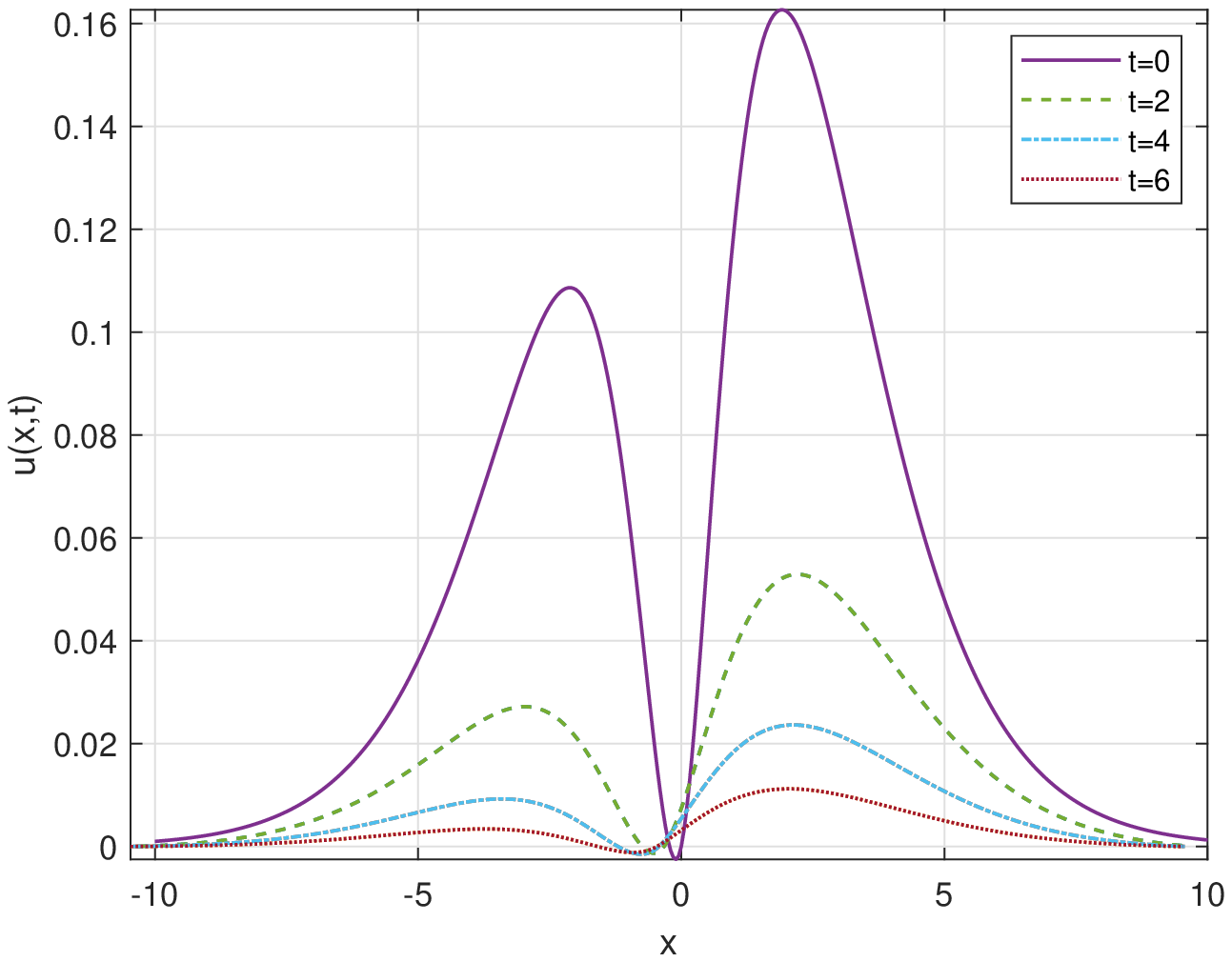}
	\includegraphics[width=0.48\linewidth]{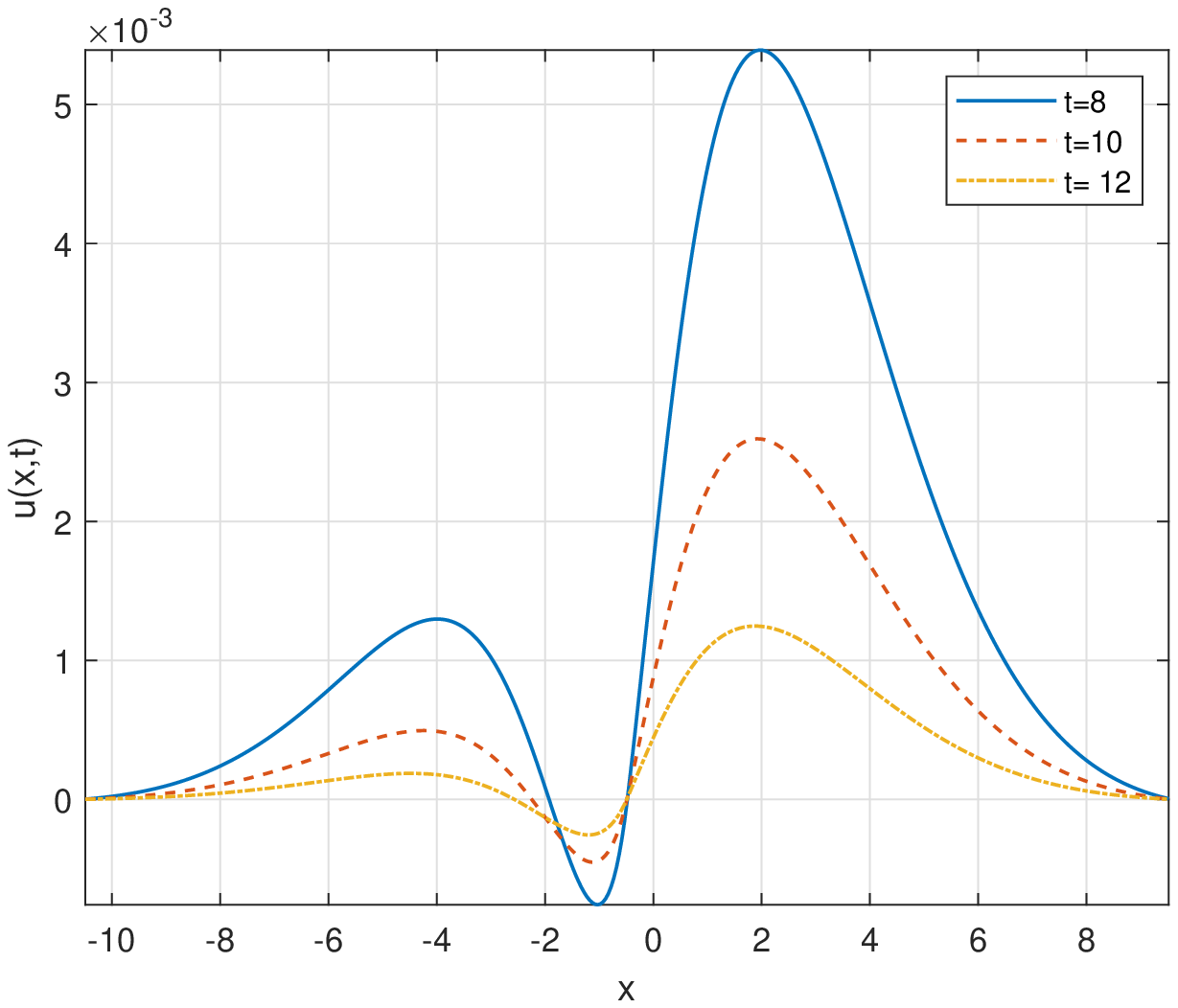} \\
	\includegraphics[width=0.48\linewidth]{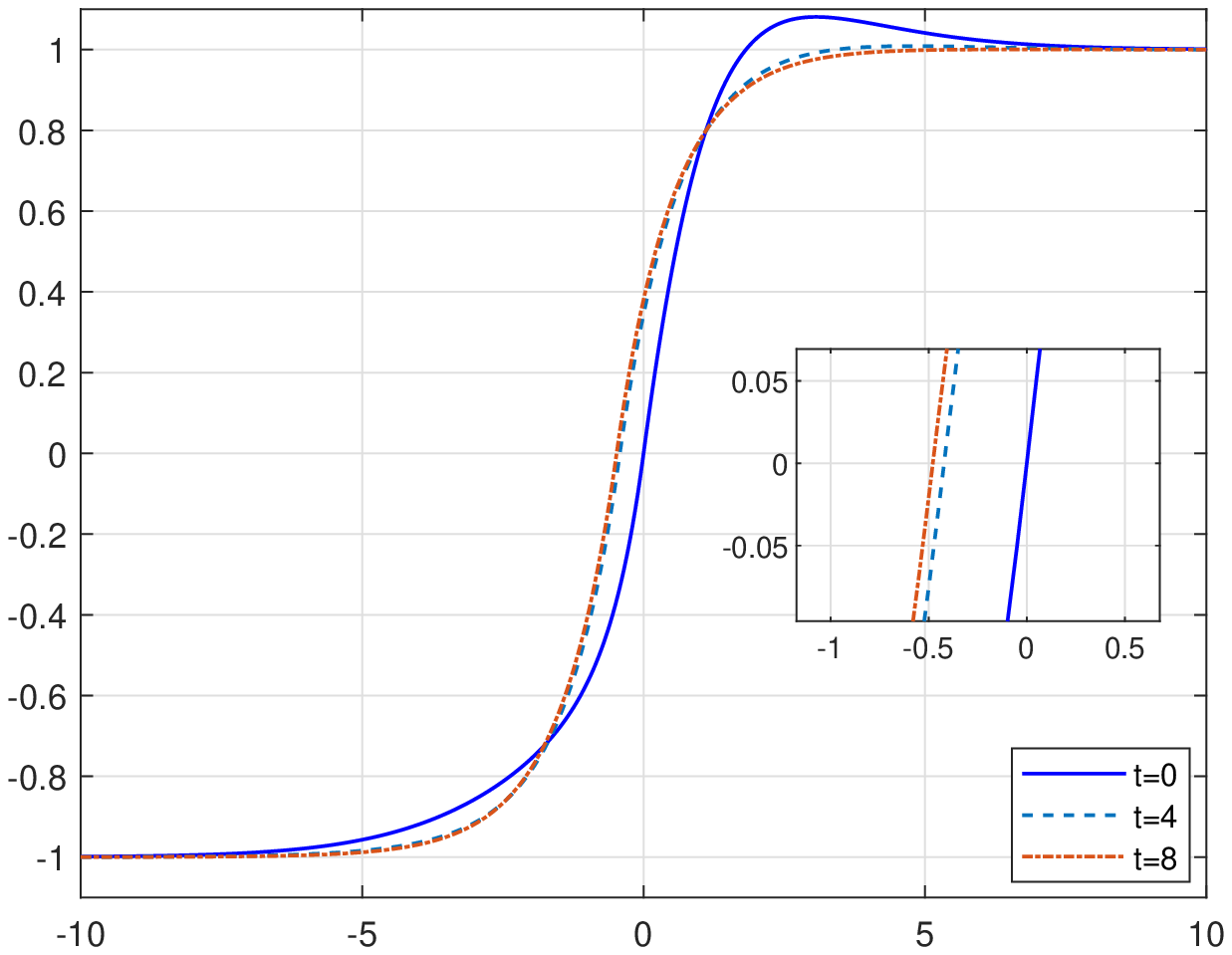}
	\includegraphics[width=0.48\linewidth]{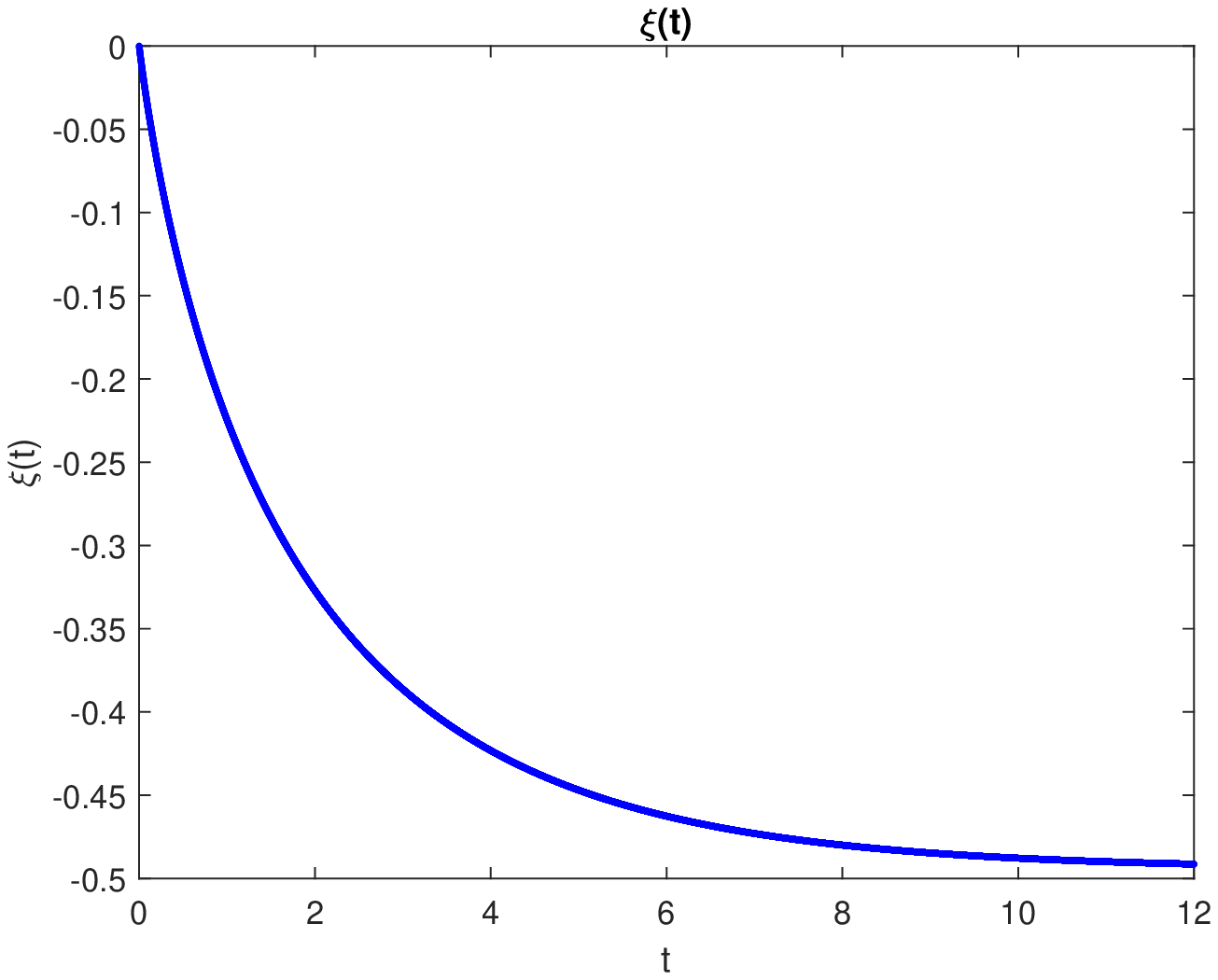}
	\caption{The same as in Figure \ref{fig:set1} but for the initial
		condition \eqref{IC2}.}
	\label{fig:set2}
\end{figure}

Figure \ref{fig:set2} reports similar results for the initial conditions with the exponential decay:
\begin{align}
\begin{cases}\label{IC2}
	v^{+}(x,0)= 0.1(x+0.5x^2)e^{-x},\\
	v^{-}(x,0)= 0.5x^2e^{-x}.
\end{cases}
\end{align}
Dynamics of the perturbation $u$ in time $t$ for the initial data
(\ref{IC2}) resembles the same dynamics as for the initial condition (\ref{IC1}). However, the relaxation time is slower for the exponentially decaying perturbations, hence the time window is extended from $T = 4$ on Figure \ref{fig:set1} to $T=12$ on Figure \ref{fig:set2}. Nevertheless, 
the interface $\xi(t)$ moves to the left and relaxes to some equilibrium 
position $\xi_{\infty} \approx -0.49$.

\section{Conclusion}
\label{section-conclusion}

We have considered the modular Burgers equation, where the advective nonlinearity produces singularities related to the modular functions. 
For the class of viscous shocks with a single interface at the zero value of the modular function, we have proven their asymptotic stability under a general perturbation of sufficient regularity with the spatial exponential decay at infinity. This work may open up new directions of research.

First, it is interesting to consider the existence and nonlinear dynamics of the viscous shocks with multiple interfaces. It is expected that the perturbations at the tails will behave similarly but the dynamics will be complicated by the internal interactions among the interfaces. The periodic waves with an infinite number of interfaces located at the equal distance is another interesting case for further studies, e.g., see \cite{Zumbrun1,Zumbrun2}.

Second, one can wonder if the exponential weight requirement on the initial perturbations can be relaxed or completely removed. It may be relatively easy to replace the exponential weights with the algebraic weights of sufficiently fast decay as done in \cite{B2}. However, we are not able to close the fixed-point arguments for the perturbations to the viscous shocks in $H^2(\mathbb{R}) \cap W^{2,\infty}$, hence new ideas for analysis are needed to remove the weights.

Finally, the Burgers equation with more singular nonlinearity, e.g. given by the logarithmic functions, arises in the applications of granular chains \cite{J20}. It is definitely interesting if the asymptotic stability of viscous shocks can be proven for the logarithmic Burgers equations. Unfortunately, our methods rely on the reductions provided by the modular nonlinearity 
and cannot be extended to the case of logarithmic or other singular nonlinearities.

\vspace{0.25cm}

{\bf Acknowledgements:} Part of this project was completed during the visit of 
D.E. Pelinovsky to LAMIA at Universit\'e des Antilles in December 2019.
He would like to express his gratitude to members of the LAMIA for their hospitality. 
The authors thank S.P. Nuiro for many discussions related to the project.
The research of U. Le and D.E. Pelinovsky is partly supported by the NSERC Discovery grant.

\end{document}